\newcommand{\Fut}{\textrm{Fut}}
\newcommand{\mcL}{{\mathcal{L}}}
\newcommand{\mcX}{{\mathcal{X}}}
\newcommand{\vol}{\mathrm{vol}}
\newcommand{\ord}{\textrm{ord}}
\newcommand{\lct}{\textrm{lct}}
\newcommand{\vphi}{\varphi}
\newcommand{\bC}{{\mathbb{C}}}
\newcommand{\FS}{\mathrm{FS}}
\newcommand{\bP}{{\mathbb{P}}}
\newcommand{\sslash}{{/\!/}}
\newcommand{\bT}{\mathbb{T}}
\newcommand{\NA}{\mathrm{NA}}
\newcommand{\MA}{\mathrm{MA}}
\newcommand{\mcJ}{{\mathcal{J}}}
\newcommand{\bQ}{{\mathbb{Q}}}
\newcommand{\bR}{{\mathbb{R}}}
\newcommand{\mcH}{{\mathcal{H}}}
\newcommand{\mcO}{{\mathcal{O}}}
\newcommand{\mcF}{{\mathcal{F}}}
\newcommand{\bZ}{{\mathbb{Z}}}
\newcommand{\bN}{{\mathbb{N}}}
\newcommand{\la}{\langle}
\newcommand{\ra}{\rangle}
\newcommand{\bfL}{\mathbf{L}}
\newcommand{\Val}{\textrm{Val}}
\newcommand{\bfD}{\mathbf{D}}
\newcommand{\bfM}{\mathbf{M}}
\newcommand{\bfH}{\mathbf{H}}
\newcommand{\bfE}{\mathbf{E}}
\newcommand{\bfJ}{\mathbf{J}}
\newcommand{\bfF}{\mathbf{F}}
\newcommand{\Aut}{\textrm{Aut}}
\newcommand{\PSH}{\textrm{PSH}}
\newcommand{\Lam}{{\mathbf{\Lambda}}}
\newcommand{\cF}{\mathcal{F}}
\newcommand{\bfI}{\mathbf{I}}
\newcommand{\ud}{\underline}
\newcommand{\fa}{\mathfrak{a}}
\newcommand{\cE}{\mathcal{E}}
\newcommand{\ddc}{\mathrm{dd^c}}
\newcommand{\bfm}{\mathbf{m}}
\newcommand{\bV}{\mathbf{V}}
\newcommand{\hvol}{\widehat{\mathrm{vol}}}
\newcommand{\ocX}{\overline{\mcX}}
\newcommand{\ocL}{\overline{\mcL}}
\theoremstyle{plain}
\newtheorem{theorem}{Theorem}[section]
\theoremstyle{definition}
\newtheorem{defn}[theorem]{Definition}
\newtheorem{exmp}[theorem]{Example}
\newtheorem{conj}[theorem]{Conjecture}
\numberwithin{equation}{section}
\begin{document}


\title{Canonical K\"{a}hler metrics and stability of algebraic varieties}
\author{Chi Li}



\begin{abstract}

We survey some recent developments in the study of canonical K\"{a}hler metrics on algebraic varieties and their relation with stability in algebraic geometry. 

\end{abstract}

\maketitle

\setcounter{tocdepth}{1}
\tableofcontents


The study of canonical K\"{a}hler metrics on algebraic varieties is a very active program in complex geometry. It is a common playground of several fields: differential geometry, partial differential equations, pluripotential theory, birational algebraic geometry and non-Archimedean analysis. We will try to give the reader a tour of this vast program, emphasizing recent developments and highlighting interactions of different concepts and techniques. This article consists of three parts. In the \textit{first} part, we discuss important classes of canonical K\"{a}hler metrics, and explain a well-established variational formalism for studying their existence.  
In the \textit{second} part, we discuss algebraic aspects by reviewing recent developments in the study of K-stability with the help of deep tools from algebraic geometry and non-Archimedean analysis. 
In the \textit{third} part, we discuss how the previous two parts are connected with each other. In particular we will discuss the Yau-Tian-Donaldson (YTD) conjecture for canonical K\"{a}hler metrics in the first part.

\section{Canonical K\"{a}hler metrics on algebraic varieties}
\subsection{Constant scalar curvature K\"{a}hler metrics}
Let $X$ be an $n$-dimensional projective manifold equipped with an ample line bundle $L$. By Kodaira's theorem, we have an embedding $\iota_m: X\rightarrow \bP^{N}$ by using a complete linear system $|mL|$ for $m\gg 1$. If we denote by $h_\FS$ the standard Fubini-Study metric on the hyperplane bundle over $\bP^N$ with Chern curvature $\omega_\FS=-\ddc \log h_\FS$, then $h_0=\iota_m^*h_\FS^{1/m}$ is a smooth Hermitian metric on $L$ whose Chern curvature $\omega_0=\frac{1}{m}\iota_m^*\omega_\FS=-\ddc \log h_0$ is a K\"{a}hler form in $c_1(L)\in H^2(X,\bR)$. In this paper we will use the convention $\ddc=\frac{\sqrt{-1}}{2\pi}\partial\bar{\partial}$. 

We will also use singular Hermitian metrics. 
An upper semicontinuous function $\vphi\in L^{1}(\omega^n)$ is called a $\omega_0$-psh potential if $\psi+\vphi$ is a plurisubharmonic function for any local potential $\psi$ of $\omega_0$ (i.e. $\omega_0=\ddc\psi$ locally). $h_\vphi:=h_0 e^{-\vphi}$ is then called a psh Hermitian metric on $L$. Denote by $\PSH(\omega_0)$ the space of $\omega_0$-psh functions. 
By a $\partial\bar{\partial}$-lemma, any closed positive $(1,1)$-current in $c_1(L)$ is of the form $\omega_\vphi:=\omega_0+\ddc \vphi=-\ddc \log h_\vphi$ with $\vphi\in \PSH(\omega_0)$. Moreover $\omega_{\vphi_2}=\omega_{\vphi_1}$ if and only if $\vphi_2-\vphi_1$ is a constant. 
Define the space of smooth strictly $\omega_0$-psh potentials (also called K\"{a}hler potentials):
\begin{equation}\label{eq-cH}
\mcH:=\mcH(\omega_0)=\{\vphi\in C^\infty(X); \omega_\vphi=\omega_0+\ddc\vphi>0\}.
\end{equation}
Fix any $\vphi\in \mcH$, if $\omega_\vphi=\sqrt{-1}\sum_{i,j}(\omega_\vphi)_{i\bar{j}}dz_i\wedge d\bar{z}_j$ under a holomorphic coordinate chart, its \textit{Ricci curvature} form $Ric(\omega_\vphi)=\frac{\sqrt{-1}}{2\pi}\sum_{i,j} R_{i\bar{j}}dz_i\wedge d\bar{z}_j$ has the coefficients given by:
\begin{equation*}
R_{i\bar{j}}:=Ric(\omega_\vphi)_{i\bar{j}}=-\frac{\partial^2\log \det((\omega_\vphi)_{k\bar{l}}) }{\partial z_i\partial \bar{z}_j}.
\end{equation*}
$Ric(\omega_\vphi)$ is a real closed $(1,1)$-form which represents the cohomology class $c_1(-K_X)=:c_1(X)$. Here $-K_X=\wedge^n T^{(1,0)}X$ is the anti-canonical line bundle of $X$. The \textit{scalar curvature} of $\omega_\vphi$ is given by the contraction:
\begin{equation*}\label{eq-Scalar}
S(\omega_\vphi)=\omega_\vphi^{i\bar{j}}(Ric(\omega_\vphi))_{i\bar{j}}=\frac{n\cdot Ric(\omega_\vphi)\wedge \omega_\vphi^{n-1}}{\omega_\vphi^n}.
\end{equation*}
$\omega_\vphi$ is called a \textit{constant scalar curvature K\"{a}hler} (cscK) metric if $S(\omega_\vphi)$ is the constant $\ud{S}$ which is the average scalar curvature and is determined by cohomology calsses:
\begin{equation}\label{eq-udSV}
\ud{S}=\frac{n \la c_1(X)\cdot c_1(L)^{\cdot n-1}, [X]\ra}{\bV} \quad \text{ with } \quad \bV=\la c_1(L)^{\cdot n}, [X]\ra. 
\end{equation}
The K\"{a}hler potential of a cscK metric is a solution to a 4-th order nonlinear PDE. In general, there are obstructions to the existence of cscK metrics. For example, the Matsushima-Lichnerowicz theorem states that if $(X, L)$ admits a cscK metric then the automorphism group $\Aut(X, L)$ must be reductive. Our goal is to discuss the Yau-Tian-Donaldson conjecture which would provide a sufficient and necessary algebraic criterion for the existence of cscK metrics.

\subsection{K\"{a}hler-Einstein metrics and weighted K\"{a}hler-Ricci soliton}
K\"{a}hler-Einstein metrics form an important class of cscK metrics. $\omega_\vphi$ is called K\"{a}hler-Einstein (KE) if $Ric(\omega_\vphi)=\lambda \omega_\vphi$ for a real constant $\lambda$. An immediate necessary condition for the existence of KE metric is that the cohomology class $c_1(X)\in H^2(X, \bR)$ is either negative, numerically trivial or positive. The existence for the first two cases was understood in 70's: there always exists a K\"{a}hler-Einstein metric if $c_1(X)$ is negative (by the work of Aubin and Yau), or if $c_1(X)$ is numerically trivial (by the work of Yau). 

Now we assume that $X$ is a Fano manifold. In other words, $-K_X$ is an ample line bundle and we set $L=-K_X$. 
Any $\vphi\in \mcH$ corresponds to a volume form:  
\begin{equation*}
\Omega_\vphi:=|s^*|^2_{h_\vphi} (\sqrt{-1})^{n^2} s\wedge \bar{s}=\Omega_0 e^{-\vphi}\;\; \text{with} \;\; s=dz_1\wedge \cdots \wedge dz_n, \; s^*=\partial_{z_1}\wedge \cdots \wedge \partial_{z_n}.
 \end{equation*}
The KE equation in this case is reduced to a complex Monge-Amp\`{e}re equation for $\vphi$:
\begin{equation*}
(\omega+\ddc\vphi)^n=e^{-\vphi}\Omega_0.
\end{equation*}
We also consider an interesting generalization of K\"{a}hler-Einstein metrics on Fano manifolds with torus actions.  
Assume that $\bT\cong (\bC^*)^r$ is an algebraic torus and $T\cong (S^1)^r\subset \bT$ is a compact real subtorus. We will use the following notation:
\begin{equation}\label{eq-Nlattice}
N_\bZ=\mathrm{Hom}_{\mathrm{alg}}(\bC^*, \bT), \quad N_\bQ=N_\bZ\otimes_\bZ\bQ, \quad N_\bR=N_\bZ\otimes_\bZ \bR. 
\end{equation}
Assume that $\bT$ acts faithfully on $X$. Then there is an induced $\bT$-action on $-K_X$.
Each $\xi\in N_\bR$ corresponds to a holomorphic vector field $V_\xi$. Assume that $\bT$ acts faithfully on $X$. Then there is an induced $\bT$-action on $-K_X$. Denote by $\mcH^T$ the set of $T$-invariant K\"{a}hler potentials. For any $\vphi\in \mcH^T$, 
the $T$-action becomes Hamiltonian with respect to $\omega_\vphi$. Denote by $\bfm_\vphi: X\rightarrow N_\bR^*\cong \bR^r$ the corresponding moment map, and let $P$ be the image of $\bfm_\vphi$. By a theorem of Atiyah-Guillemin-Sternberg, $P$ is a convex polytope which depends only on the K\"{a}hler class $c_1(L)$. 
Let $g: P\rightarrow \bR$ be a smooth \textit{positive} function. The $g$-soliton equation for $\vphi\in \mcH(-K_X)^T$ is:
\begin{equation*}\label{eq-gsoliton}
g(\bfm_\vphi) (\omega_0+\ddc\vphi)^n= e^{-\vphi}\Omega_0.
\end{equation*}
The equivalent tensorial equation is given by
$
Ric(\omega_\vphi)=\omega_\vphi+\ddc \log g(\bfm_\vphi). 
$
\begin{exmp}\label{exmp-KR}
If $g(y)=e^{-\la y, \xi\ra}$, then the above equation becomes the standard 
K\"{a}hler-Ricci soliton equation $Ric(\omega_\vphi)=\omega_\vphi+\mathscr{L}_{V_\xi}\omega_\vphi$ where $\mathscr{L}$ denotes the Lie derivative.
\end{exmp}

\subsection{K\"{a}hler-Einstein metrics on log Fano pairs}\label{sec-logFano}

Singular algebraic varieties and log pairs are important objects in algebraic geometry, and
appear naturally for studying limits of smooth varieties.   
It is thus natural to study canonical K\"{a}hler metric on general log pairs. 
We recall a definition from birational algebraic geometry. 
Let $X$ be a normal projective variety and $D$ be an $\bQ$-Weil divisor. Assume that $K_X+D$ is $\bQ$-Cartier. Let $\mu: Y\rightarrow X$ be a resolution of singularities of $(X, D)$ with simple normal crossing exceptional divisors $\sum_i E_i$. 
We then have an identity:
\begin{equation}\label{eq-KYKXD}
K_Y=\mu^*(K_X+D)+\sum_i a_i E_i.
\end{equation}
Here $A_{(X,D)}(E_i):=a_i+1$ is called the log discrepancy of $E_i$. The pair $(X, D)$ has klt singularities if $A_{(X,D)}(E_i)>0$ for any $E_i$. 
We will always assume that $(X,D)$ has klt singularities. 

If $K_X+D$ is ample or numerically trivial, Yau and Aubin's existence result had been generalized to the singular and log case 
in \cite{EGZ09}, partly based on Ko\l odziej's pluripotential estimates. There were also many related works by Yau, Tian, H.Tsuji, Z. Zhang and many others. 

Now we assume that $-(K_X+D)$ is ample and call $(X, D)$ a log Fano pair. 
Then one can consider K\"{a}hler-Einstein equation or more generally $g$-soliton equation on $(X, D)$. First note that there is a globally defined volume form as in the smooth case: choose a local trivializing section $s$ of $m(K_X+D)$ with the dual $s^*$ and define $\Omega_0=|s^*|_{h_0}^{2/m} (\sqrt{-1}^{mn^2}s\wedge \bar{s})^{1/m}$. Assume that $\bT$ acts on $X$ faithfully and preserves the divisor $D$. With the notation from before,
we say that $\vphi$ is the potential for a $g$-weighted soliton or just $g$-soliton on $(X, D)$ if $\vphi$ is a bounded $\omega_0$-psh function that satisfies the equation:
\begin{equation}\label{eq-logsoliton}
g(\bfm_\vphi) (\omega+\ddc\vphi)^n=e^{-\vphi}\Omega_0 
\end{equation}
For any bounded $\vphi\in \PSH(\omega_0)$, the $g$-weighted Monge-Amp\`{e}re measure on the left-hand-side of \eqref{eq-logsoliton} is well-defined by the work of Berman-Witt-Nystr\"{o}m \cite{BW14} and also by Han-Li \cite{HL20}, generalizing the definition of Bedford-Taylor (when $g=1$).  
It is known that any bounded solution $\vphi$, if it exists, is orbifold smooth over the orbifold locus of $(X,D)$.  Moreover $p$ is a regular point of $\mathrm{supp}(D)$ such that $D=(1-\beta)\{z_1=0\}$ locally for a holomorphic function $z_1$ (with $\beta\in (0,1]$), then the associated K\"{a}hler metric is modeled by $\bC_\beta\times\bC^{n-1}$ where $\bC_\beta=(\bC, dr^2+\beta^2 r^2 d\theta^2)$ is the 2-dimensional flat cone with cone angle $2\pi \beta$. 

\subsection{Ricci-flat K\"{a}hler cone metrics}\label{sec-RFKC}

The class of Ricci-flat K\"{a}hler cone metrics is closely related to KE/$g$-soliton metrics, and is interesting in both complex geometry and mathematical physics (see \cite{MSY08}). 

Let $Y=\mathbf{Spec}(R)$ be an $(n+1)$-dimensional affine variety with a singularity $o\in Y$. Assume that an algebraic torus $\hat{\bT}\cong (\bC^*)^{r+1}$ acts faithfully on $Y$, with $o$ being the only fixed point. Define $\hat{N}_\bQ, \hat{N}_\bR$ similar to \eqref{eq-Nlattice}. The $\hat{\bT}$-action corresponds to a weight decomposition of the coordinate ring $R=\bigoplus_{\alpha\in \bZ^{r+1}}R_\alpha$.  The Reeb cone can be defined as:
\begin{equation*}
\hat{N}^+_\bR=\big\{\xi\in \hat{N}_\bR; \la \alpha, \xi\ra>0 \text{ for all } \alpha\in \bZ^{r+1}\setminus \{0\} \text{ with } R_\alpha\neq 0 \big\}.
\end{equation*}
Any $\hat{\xi}\in \hat{N}^+_\bR$ is called a Reeb vector and corresponds to an expanding holomorphic vector field $V_{\hat{\xi}}$.  Assume furthermore that $Y$ is $\bQ$-Gorenstein and there is a $\hat{\bT}$-equivariant non-vanishing section $s\in |mK_Y|$, which induces a $\hat{\bT}$-equivariant volume form $dV_Y=(\sqrt{-1}^{m(n+1)^2} s\wedge \bar{s})^{1/m}$ on $Y$.
We call the data $(Y, \hat{\xi})$ with $\hat{\xi}\in \hat{N}^+_\bR$ a polarized Fano cone. 

Let $\hat{T}\cong (S^1)^{r+1}$ be a compact real subtorus of $\hat{\bT}$. 
A $\hat{T}$-invariant function $r: Y\rightarrow \bR_{\ge 0}$ is called a radius function for $\hat{\xi}\in \hat{N}^+_\bR$ if 
$\hat{\omega}=\ddc r^2$ is a K\"{a}hler cone metric on $Y^*=Y\setminus \{o\}$ and $\frac{1}{2}(r\partial_r-\sqrt{-1}J (r\partial_r))=V_{\hat{\xi}}$. Here $J$ is a complex structure on $Y^*$ and $\hat{\omega}$ is called a K\"{a}hler cone metric if $G:=\frac{1}{2}\hat{\omega}(\cdot, J\cdot)$ on $Y^*$ is isometric to $dr^2+r^2 G_S$ where $S=\{r=1\}$ and $G_S=G|_S$. In the literature of CR geometry, the induced structure on $S$ by a K\"{a}hler cone metric is called a Sasaki structure. 
$\hat{\omega}=\ddc r^2$ is called \textit{Ricci-flat} if $Ric(\hat{\omega})=0$. In this case, the radius function satisfies an equation (up to rescaling):
\begin{equation*}\label{eq-RFMA}
(\ddc r^2)^{n+1}=dV_Y. 
\end{equation*}
If $\hat{\xi}\in \hat{N}_\bQ$, then $\hat{\omega}$ is called quasi-regular, and $V_{\hat{\xi}}$ generates a $\bC^*$-subgroup $\la \hat{\xi}\ra$ of $\hat{\bT}$. The GIT quotient $X=Y\sslash \la \hat{\xi}\ra$ admits an orbifold structure encoded by a log Fano pair $(X, D)$. 
A straightforward calculation shows that a quasi-regular $(Y, \hat{\xi})$ admits a Ricci-flat K\"{a}hler cone metric if and only if $(X, D)$ admits a K\"{a}hler-Einstein metric. 

In general there are many irregular Ricci-flat K\"{a}hler cone metrics, i.e. with $\hat{\xi}\in \hat{N}_\bR\setminus \hat{N}_\bQ$. 
Recent works by Apostolov-Calderbank-Jubert-Lahdili establish an equivalence between Ricci-flat K\"{a}hler cone metrics and special $g$-soliton metrics. More precisely, fix any $\hat{\chi}\in \hat{N}^+_\bQ$ and consider the quotient $(X, D)=Y\sslash \la \hat{\chi}\ra$ as above. It is shown in \cite{AJL21} (see also \cite{Li21b}) that the Ricci-flat K\"{a}hler cone metric on $(Y, \hat{\xi})$ is equivalent to the $g$-soliton metric on $(X, D)$ with $g(y)=(n+1+\la y, \xi\ra)^{-n-2}$ where $\xi$ (or equivalently $V_\xi$) is induced by $\hat{\xi}$ on $X$.

\subsection{Analytic criteria for the existence}

We now review a well-understood criterion for the existence of above canonical K\"{a}hler metrics. The general idea is to view corresponding equations as Euler-Lagrange equations of appropriate energy functionals and then use a variational approach to prove that the existence of solutions is equivalent to the coercivity of the energy functionals.  First we have the following functionals defined for any $\vphi\in \mcH$ (see \eqref{eq-cH}).
\begin{align}
&\bfE(\vphi)=\frac{1}{(n+1)\bV}\sum_{k=0}^n \int_X \vphi \omega_\vphi^k\wedge \omega_0^{n-k},\quad \Lam(\vphi)=\frac{1}{\bV}\int_X \vphi \omega_0^n \label{eq-Evphi} \\
&\bfJ(\vphi)=\Lam(\vphi)-\bfE(\vphi), \quad \bfE^\chi(\vphi)=\frac{1}{\bV}\sum_{k=0}^{n-1}\int_X \vphi \chi\wedge \omega_\vphi^k\wedge \omega^{n-1-k}. \label{eq-Jvphi}
\end{align}
Here $\bV$ is defined in \eqref{eq-udSV} and $\chi$ is any closed real $(1,1)$-form. 

The following functionals are important for studying the cscK problem.
\begin{align}
&\bfH(\vphi)=\frac{1}{\bV}\int_X \log \frac{\omega_\vphi^n}{\Omega_0}\omega_\vphi^n,  \quad
\bfM(\vphi)=\bfH(\vphi)+\bfE^{-Ric(\omega_0)}(\vphi)+\ud{S}\cdot \bfE(\vphi).  \label{eq-Mvphi}
\end{align}
$\bfH(\vphi)$ is usually called the entropy of the measure $\omega_\vphi^n$. One can verify that any critical point of $\bfM$ is the potential of a cscK metric. 

For K\"{a}hler-Einstein (KE) metrics on Fano manifolds, we have more functionals: 
\begin{align}\label{eq-Dvphi}
\bfL(\vphi)=-\log \Big(\frac{1}{\bV}\int_X e^{-\vphi}\Omega_0\Big), \quad \bfD(\vphi)=-\bfE(\vphi)+\bfL(\vphi).
\end{align}
A critical point of $\bfD$ is a KE potential. 
These functionals can be generalized to the settings of $g$-weighted solitons and Ricci-flat K\"{a}hler cone metrics (see \cite{Li21b} for references). 

To apply the variational approach, one first needs a `completion' of $\mcH$.
Such completion was defined by Guedj-Zeriahi extending the local study of Cegrell. Following \cite{BBJ18}, one way to introduce this is to first define the $\bfE$ functional for any $\vphi\in \PSH(\omega_0)$:
\begin{equation}\label{eq-Epsh}
\bfE(\vphi)=\inf\{\bfE(\tilde{\vphi}); \tilde{\vphi}\ge \vphi, \quad \tilde{\vphi}\in \mcH(\omega_0)\}, \quad
\end{equation}
Then define the set of finite energy potentials as:
\begin{equation}\label{eq-E1A}
\cE^1:=\cE^1(\omega_0)=\{\vphi\in \PSH(\omega_0); \bfE(\vphi)>-\infty\}.
\end{equation}
After the work \cite{BBEGZ}, $\cE^1$ can be endowed with a strong topology which is the coarsest refinement of the weak topology (i.e. the $L^1$-topology) that makes $\bfE$ continuous. 
The above energy functionals can be extended to $\cE^1$, and they satisfy important regularization properties:
\begin{theorem}[\cite{BBEGZ, BDL17}]\label{thm-Freg}
For any $\vphi\in \cE^1$, there exist $\{\vphi_k\}_{k\in\bN}\subset \mcH$ such that $\bfF(\vphi_k)\rightarrow \bfF(\vphi)$ for $\bfF\in \{\bfE, \Lam, \bfE^{-Ric}, \bfH\}$. 
\end{theorem}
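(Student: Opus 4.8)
The plan is to follow the strategy of \cite{BBEGZ, BDL17}: reduce the statement to continuity of $\bfE$, $\Lam$ and $\bfE^{-Ric(\omega_0)}$ along \emph{strongly} convergent sequences in $\cE^1$, and then construct an approximating sequence in $\mcH$ that converges strongly and moreover has \emph{converging entropy}. For the three ``energy'' functionals the required continuity is available: $\bfE$ is continuous for the strong topology by the very definition of that topology, strong convergence refines $L^1$-convergence so $\Lam$ passes to the limit, and writing $-Ric(\omega_0)=\chi_1-\chi_2$ with $\chi_1,\chi_2$ smooth closed semipositive forms reduces $\bfE^{-Ric(\omega_0)}=\bfE^{\chi_1}-\bfE^{\chi_2}$ to continuity of each $\bfE^{\chi_i}$ along strong convergence (again from \cite{BBEGZ}). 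The entropy $\bfH$, however, is only lower semicontinuous, so it is the entropy term that dictates the construction. I would first dispose of the case $\bfH(\vphi)=+\infty$ by taking any $\vphi_k\in\mcH$ decreasing to $\vphi$ (B\l ocki--Ko\l odziej regularization): then $\bfE(\vphi_k),\Lam(\vphi_k),\bfE^{-Ric(\omega_0)}(\vphi_k)$ converge by Bedford--Taylor continuity along decreasing sequences, while $\liminf_k\bfH(\vphi_k)\ge\bfH(\vphi)=+\infty$ since relative entropy is lower semicontinuous under the weak convergence $\omega_{\vphi_k}^n\to\omega_\vphi^n$. So from now on assume $\bfH(\vphi)<+\infty$.

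Write $\mu_0:=\bV^{-1}\Omega_0$ (a smooth probability measure), $\mu:=\bV^{-1}\omega_\vphi^n$, with density $f:=d\mu/d\mu_0=\omega_\vphi^n/\Omega_0\in L^1(\mu_0)$ and $\int_X f\log f\,d\mu_0=\bfH(\vphi)$. The idea is to approximate $\mu$ by smooth positive measures with converging entropy and then invert the Monge--Amp\`{e}re operator. Concretely, the truncations $f_j:=c_j\min(f,j)$ with $c_j\to1$ give probability measures $f_j\mu_0\to\mu$ in total variation, and $\bfH(f_j\mu_0\,|\,\mu_0)=\int_X f_j\log f_j\,d\mu_0\to\bfH(\vphi)$ by dominated convergence (using $t\log t\ge-1/e$ and $f\log^+ f\in L^1(\mu_0)$, which follows from finiteness of $\bfH(\vphi)$). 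Each $f_j$ being bounded, a further mollification produces smooth strictly positive probability densities $g_j$ with $g_j\mu_0\to\mu$ weakly and, since $t\mapsto t\log t$ is uniformly continuous on bounded intervals, $\bfH(g_j\mu_0\,|\,\mu_0)\to\bfH(\vphi)$ after a diagonal choice of the mollification parameters. By the Aubin--Yau theorem there is for each $j$ a unique $\vphi_j\in\mcH$ solving $\omega_{\vphi_j}^n=\bV\,g_j\mu_0$ with the normalization $\Lam(\vphi_j)=\Lam(\vphi)$, and by construction $\bfH(\vphi_j)=\bfH(g_j\mu_0\,|\,\mu_0)\to\bfH(\vphi)$.

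It then remains to show $\vphi_j\to\vphi$ strongly in $\cE^1$. Since $\bfH(\vphi_j)$ is bounded and $\Lam(\vphi_j)$ is constant, $\{\vphi_j\}$ is relatively compact in $\PSH(\omega_0)\cap L^1$; I would invoke the entropy compactness theorem of \cite{BBEGZ} to pass, along an arbitrary subsequence, to a further subsequence converging in the strong topology of $\cE^1$ to some $\psi\in\cE^1$. Along that subsequence $\omega_{\vphi_j}^n=\bV\,g_j\mu_0$ converges weakly both to $\omega_\psi^n$ and to $\bV\mu$, so $\omega_\psi^n=\bV\mu=\omega_\vphi^n$. By uniqueness of finite-energy solutions of the complex Monge--Amp\`{e}re equation, $\psi=\vphi+c$ for a constant; because $\Lam$ passes to the $L^1$-limit and $\Lam(\vphi_j)=\Lam(\vphi)$, we get $c=0$, hence $\psi=\vphi$. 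As every subsequence thus has a sub-subsequence converging strongly to $\vphi$ (the strong topology on $\cE^1$ being metrizable), the whole sequence converges: $\vphi_j\to\vphi$ strongly in $\cE^1$. The convergences $\bfE(\vphi_j)\to\bfE(\vphi)$, $\Lam(\vphi_j)\to\Lam(\vphi)$ and $\bfE^{-Ric(\omega_0)}(\vphi_j)\to\bfE^{-Ric(\omega_0)}(\vphi)$ then follow from the first paragraph, and $\bfH(\vphi_j)\to\bfH(\vphi)$ holds by construction.

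The two genuinely nontrivial ingredients are: (i) producing $g_j\mu_0$ with $\bfH(g_j\mu_0\,|\,\mu_0)\to\bfH(\vphi)$ — lower semicontinuity only gives ``$\ge$'', so the approximation must also control the entropy \emph{from above}, which is why truncation followed by mollification (each step changing the entropy in a controlled way) is used rather than a naive smoothing of $\vphi$; and (ii) the entropy compactness theorem, which is what turns the uniform bounds on $\bfH$ and $\Lam$ into relative compactness in the strong topology of $\cE^1$ together with weak convergence of the Monge--Amp\`{e}re measures — without it one would only obtain an $L^1$-limit and could not identify it as the finite-energy solution of the limiting equation. I expect (ii), the hardest input, to be quotable directly from \cite{BBEGZ}, so the real work in a self-contained treatment is (i) together with the careful bookkeeping of additive constants.
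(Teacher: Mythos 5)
Your proposal matches the approach the paper attributes to \cite{BDL17}: regularize the Monge--Amp\`{e}re measure with converging entropy (truncate the density, then smooth), invert via Yau's theorem to produce $\vphi_j\in\mcH$, and use the entropy compactness theorem of \cite{BBEGZ} together with uniqueness of finite-energy Monge--Amp\`{e}re solutions to identify the limit. One small technical point: smoothing a truncated density that vanishes on an open set need not yield a \emph{strictly} positive density, so before applying Yau's theorem one should replace $g_j$ by $(1-\epsilon_j)g_j+\epsilon_j$ with $\epsilon_j\to 0$, an easy fix which perturbs the entropy negligibly and does not affect the rest of the argument.
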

We would like to emphasize the result for $\bfF=\bfH$, which was proved in \cite{BDL17}. The idea of proof there is to first regularize the measure $\omega_\vphi^n$ with converging entropy and then use Yau's solution to complex Monge-Amp\`{e}re equations with prescribed volume forms.  Later we will encounter a same idea later in the non-Archimedean setting. 

Another key concept is the geodesic between two finite energy potentials.
For $\vphi_i, \in \cE^1, i=0,1$,  the \textit{geodesic} connecting them is the following $p_1^*\omega_0$-psh function on $X\times [0, 1]\times S^1$ where $p_1$ is the projection to the first factor (see \cite{BBJ18, Dar15}). 
\begin{equation}\label{eq-geodE1}
\Phi=\sup\big\{\Psi; \; \Psi \text{ is $S^1$-invariant and $p_1^*\omega_0$-psh},\; \lim_{s\rightarrow i}\Psi(\cdot, s)\le \vphi(i), i=0,1 \big\}.
\end{equation}
The concept of geodesic originates from Mabuchi's $L^2$-Riemannian metric on $\mcH$. According to the work of Semmes and Donaldson, if $\vphi_i\in \mcH, i=0,1$, then the geodesic $\Phi$ is a solution to the Dirichlet problem of homogeneous complex Monge-Amp\`{e}re equation:
\begin{equation}\label{eq-geod}
(p_1^*\omega_0+\ddc \Phi)^{n+1}=0, \quad \Phi(\cdot, i)=\vphi_i, i=0,1. 
\end{equation}
Since $\Phi$ is $S^1$-invariant, we can consider $\Phi$ as a family of $\omega_0$-psh functions $\{\vphi(s)\}_{s\in [0,1]}$. 
 \begin{theorem}[\cite{BB17, BDL17}]\label{thm-convex}
 Let $\Phi=\{\vphi(s)\}_{s\in [0,1]}$ be a geodesic segment in $\cE^1$.
$\mathrm{(1)}$ $s\mapsto \bfE(\vphi(s))$ is affine; $\mathrm{(2)}$
$s\mapsto \bfM(\vphi(s))$ is convex. 
\end{theorem}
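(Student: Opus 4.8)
\medskip
\noindent\textit{Proof strategy.}

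For part (1) the plan is to establish the identity in the smooth case and then approximate. If $\vphi_0,\vphi_1\in\mcH$ and $\Phi$ is smooth, I would use the first variation $\frac{d}{ds}\bfE(\vphi(s))=\frac{1}{\bV}\int_X\dot\vphi\,\omega_{\vphi(s)}^n$, differentiate once more, and integrate by parts to obtain
\begin{equation*}
\frac{d^2}{ds^2}\bfE(\vphi(s))=\frac{1}{\bV}\int_X\bigl(\ddot\vphi-|\partial\dot\vphi|^2_{\omega_{\vphi(s)}}\bigr)\,\omega_{\vphi(s)}^n.
\end{equation*}
Then I would expand the homogeneous complex Monge--Amp\`ere equation \eqref{eq-geod}: decomposing $(p_1^*\omega_0+\ddc\Phi)^{n+1}$ into base and fibre directions shows that its vanishing is equivalent, fibrewise, to $\ddot\vphi-|\partial\dot\vphi|^2_{\omega_{\vphi(s)}}=0$, so the right-hand side above is zero and $\bfE$ is affine. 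To reach a general $\vphi_i\in\cE^1$ I would truncate, $\vphi_i^{(j)}:=\max(\vphi_i,-j)$; the geodesic between these bounded potentials is again bounded, the displayed computation remains valid there in the Bedford--Taylor sense, and, letting $j\to\infty$, the corresponding geodesics decrease to $\Phi$ with $\bfE$ continuous along decreasing limits, so affineness is inherited.

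For part (2) I would again begin with the smooth case, where the classical second-variation formula for the Mabuchi functional gives, along any smooth path,
\begin{equation*}
\frac{d^2}{ds^2}\bfM(\vphi(s))=\frac{1}{\bV}\int_X\bigl|\dbar\,\nabla^{1,0}_{\omega_{\vphi(s)}}\dot\vphi\bigr|^2_{\omega_{\vphi(s)}}\,\omega_{\vphi(s)}^n-\frac{1}{\bV}\int_X\bigl(\ddot\vphi-|\partial\dot\vphi|^2_{\omega_{\vphi(s)}}\bigr)\bigl(S(\omega_{\vphi(s)})-\ud{S}\bigr)\,\omega_{\vphi(s)}^n;
\end{equation*}
the second term vanishes along a geodesic by the identity from part (1), and the first is nonnegative, which settles the smooth case.

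The substantive work is to drop the regularity hypothesis, since solutions of \eqref{eq-geod} are at best $C^{1,1}$ for bounded endpoints and merely finite-energy $\omega_0$-psh for $\cE^1$ endpoints, so termwise differentiation is not available. My plan is to write $\bfM=\bfH+\bfE^{-Ric(\omega_0)}+\ud{S}\,\bfE$: by part (1) the last term is affine, and $\bfE^{-Ric(\omega_0)}$ is a mixed Monge--Amp\`ere energy whose behaviour along bounded geodesics can be tracked by Bedford--Taylor calculus. The crucial input, following Berman--Berndtsson \cite{BB17}, is the convexity of the entropy $s\mapsto\bfH(\vphi(s))$; viewing the bounded geodesic as a $p_1^*\omega_0$-psh function $\Phi$ on $X\times A$ over the annulus $A=\{1<|\tau|<e\}$ solving $(p_1^*\omega_0+\ddc\Phi)^{n+1}=0$, I would show that $\tau\mapsto\bfH(\vphi(\tau))$ is subharmonic on $A$ by a positivity-of-direct-images argument in the spirit of Berndtsson's theorem; $S^1$-invariance then promotes subharmonicity to convexity in $s=\log|\tau|$. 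This yields convexity of $\bfM$ along bounded geodesics. To descend to $\cE^1$-geodesics I would run the truncation of part (1) once more: the energy terms converge by part (1) and by continuity of mixed energies along decreasing limits, the entropy is lower semicontinuous along the approximation, and convexity of $\bfM$ passes to the limit, as carried out in \cite{BDL17}.

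I expect the main obstacle to be exactly this loss of regularity: the convexity of the entropy cannot be obtained from a naive second derivative and must instead be extracted from the complex-analytic positivity argument of \cite{BB17}, while descending to the finite-energy class additionally requires the lower semicontinuity of $\bfH$ and the approximation scheme of \cite{BDL17}.
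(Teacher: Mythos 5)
The paper you are working from does not actually prove Theorem~\ref{thm-convex}: it is a survey, and the theorem is cited from \cite{BB17,BDL17} with only the remark that the smooth case is a direct calculation while the general case requires much more because of the failure of $C^2$-regularity for solutions of \eqref{eq-geod}. So the comparison is really against the arguments in the cited literature, which your sketch does attempt to reconstruct.

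Your treatment of part (1) is correct: the smooth second-variation identity, its reduction via the fibrewise form of the geodesic equation, and the monotone truncation $\vphi_i^{(j)}=\max(\vphi_i,-j)$ (using the comparison principle for the geodesics and continuity of $\bfE$ along decreasing sequences in $\cE^1$) is the standard route. Likewise your smooth second-variation formula for $\bfM$ in part (2) is the right starting point, and you correctly identify the two ingredients for the nonsmooth case: Berndtsson-type positivity of direct images for bounded geodesics (\cite{BB17}) and the approximation/lower-semicontinuity machinery for $\cE^1$-geodesics (\cite{BDL17}).

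The gap is in the claim that ``the crucial input is the convexity of the entropy $s\mapsto\bfH(\vphi(s))$.'' The raw entropy $\bfH(\vphi)=\frac{1}{\bV}\int\log\frac{\omega_\vphi^n}{\Omega_0}\omega_\vphi^n$ is \emph{not} convex along geodesics in general, and the decomposition ``$\bfH$ convex, $\bfE^{-Ric(\omega_0)}$ tracked by Bedford--Taylor'' does not close. To see why, compute the second variation of the twist term along a smooth geodesic: for any closed $(1,1)$-form $\chi$,
\begin{equation*}
\frac{d^2}{ds^2}\bfE^{\chi}(\vphi(s))=\frac{1}{\bV}\int_X \chi\bigl(\nabla^{1,0}_{\omega_\vphi}\dot\vphi,\,\overline{\nabla^{1,0}_{\omega_\vphi}\dot\vphi}\bigr)\,\omega_\vphi^n,
\end{equation*}
so $\bfE^{\chi}$ is convex when $\chi\ge 0$ and concave when $\chi\le 0$; in particular $\bfE^{-Ric(\omega_0)}$ is genuinely non-affine, with sign depending on $Ric(\omega_0)$. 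Since $\bfM$ and $\bfE$ are reference-independent while $\bfH$ and $\bfE^{-Ric(\omega_0)}$ each depend on $\omega_0$, convexity of $\bfH$ alone cannot be a universal statement: choosing $\omega_0$ with $Ric(\omega_0)<0$ makes the twist term convex and forces $\bfH=\bfM-\underline S\bfE-\bfE^{-Ric(\omega_0)}$ into a difference of convex quantities. What \cite{BB17} actually prove is the subharmonicity on the annulus of the \emph{full} $\bfM(\vphi_\tau)$ (equivalently, of the entropy taken together with the Ricci-twist term), by regularizing the homogeneous Monge--Amp\`ere equation, computing $\ddc_\tau\bfM(\vphi_\tau)$ as the pushforward of a current on $X\times A$, and identifying the potentially negative contribution with a term proportional to $(p_1^*\omega_0+\ddc\Phi)^{n+1}$, which vanishes (or is $O(\epsilon)$) by the (regularized) geodesic equation. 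The positivity-of-direct-images input enters at this level, not as a statement about $\bfH$ in isolation. Your plan would work if you replaced ``convexity of $\bfH$'' by ``subharmonicity of the combination handled by the Berndtsson argument in \cite{BB17}'' and made explicit that the twist is absorbed into the relevant adjoint line bundle $K_{X\times A/A}+L$; as written, the argument asserts something stronger than what is true and than what is needed.
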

Theorem \ref{thm-convex} is very important in the variational approach. 
If a geodesic is smooth, the statements follow from straight-forward calculations. However there are examples (first due to Lempert-Vivas) showing that the solution to \eqref{eq-geod} in general does not have sufficient regularity. 
So the proofs of above results are much more involved. 

In this paper $\tilde{\bT}$ will always denote a maximal torus of the linear algebraic group $\Aut(X, L)$ and $\tilde{T}$ is a maximal real subtorus of $\tilde{\bT}$.  In the following result, we use the translation invariance $\bfF(\vphi+c)=\bfF(\vphi)$ for $\bfF\in \{\bfM, \bfJ\}$ and hence $\bfF(\omega_\vphi):=\bfF(\vphi)$ is well-defined.
 \begin{theorem}[\cite{CC21,DR17,BDL20}]\label{thm-cscKcri}
There exists a $\tilde{T}$-invariant cscK metric in $c_1(L)$ if and only if $\bfM$ is reduced coercive, which means that there exist $\gamma, C>0$ such that for any $\vphi\in \mcH^{\tilde{T}}$, 
\begin{equation}\label{eq-redcoer}
\bfM(\omega_\vphi)\ge \gamma \cdot \inf_{\sigma\in \tilde{\bT}} \bfJ(\sigma^*\omega_\vphi)-C. 
\end{equation}
\end{theorem}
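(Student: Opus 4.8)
The plan is to prove the two implications separately, combining the variational method with deep a priori estimates. For the ``if'' direction I would realize a cscK metric as a minimizer of $\bfM$ over $\cE^1$ and then upgrade its regularity. Reduced coercivity \eqref{eq-redcoer} first gives $\bfM\ge -C$, so $\inf_{\cE^1}\bfM>-\infty$; take a minimizing sequence $\vphi_j\in\mcH^{\tilde T}$, and, using the $\tilde\bT$-invariance of $\bfM$, replace each $\vphi_j$ by $\sigma_j^*\vphi_j$ with $\sigma_j\in\tilde\bT$ almost attaining $\inf_{\sigma\in\tilde\bT}\bfJ(\sigma^*\vphi_j)$, so that \eqref{eq-redcoer} forces $\bfJ(\vphi_j)$ to stay bounded. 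A technical point, settled in \cite{DR17}, is that reduced coercivity moreover bootstraps to a uniform entropy bound along the sequence; together with a normalization $\sup_X\vphi_j=0$, Darvas's $d_1$-compactness, the lower semicontinuity of the entropy (the other, energy-type terms of $\bfM$ in \eqref{eq-Mvphi} being $d_1$-continuous), and Theorem \ref{thm-Freg} (to identify $\inf_{\mcH}\bfM$ with $\inf_{\cE^1}\bfM$), this yields a minimizer $\vphi_\infty\in\cE^1$ of $\bfM$ of finite entropy, still $\tilde T$-invariant.

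The step I expect to be the main obstacle is the regularity: showing that such a finite-entropy minimizer of $\bfM$ is in fact a smooth cscK potential. Here I would first check that $\vphi_\infty$ solves the Euler--Lagrange equation $S(\omega_{\vphi_\infty})=\ud S$ in the pluripotential sense, with $\omega_{\vphi_\infty}^n$ of bounded entropy, and then invoke the Chen--Cheng a priori estimates --- a uniform $C^0$ bound, a Laplacian estimate, and higher-order estimates for the fourth-order cscK system, all in terms of an entropy bound --- to conclude $\vphi_\infty\in\mcH$. This analytic input, together with a continuity/deformation scheme connecting the cscK equation to a solvable twisted equation, is the heart of \cite{CC21}; the variational packaging that reduces existence to it is due to \cite{DR17, BDL20}.

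For the ``only if'' direction I would argue by contradiction, following \cite{BDL20}. Let $\omega_{\vphi_0}$ be a $\tilde T$-invariant cscK metric; by Berman--Berndtsson, $\vphi_0$ minimizes $\bfM$ and the minimum set of $\bfM$ is a single $\Aut(X,L)$-orbit. If \eqref{eq-redcoer} failed for every $\gamma$, then taking $\gamma=1/j$ and $C=j$ would produce $\vphi_j\in\mcH^{\tilde T}$ with $\bfM(\vphi_j)<\tfrac{1}{j}\inf_{\sigma\in\tilde\bT}\bfJ(\sigma^*\vphi_j)-j$; since $\bfM\ge\bfM(\vphi_0)$, it follows that $t_j:=\inf_{\sigma\in\tilde\bT}\bfJ(\sigma^*\vphi_j)\to\infty$ and $\bigl(\bfM(\vphi_j)-\bfM(\vphi_0)\bigr)/t_j\to 0$. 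After replacing $\vphi_j$ by $\sigma_j^*\vphi_j$ almost realizing the infimum, so $\bfJ(\vphi_j)\sim t_j$, I would pass --- via Darvas's $d_1$-geometry on $\cE^1$ --- to a geodesic ray $t\mapsto\psi_t$ emanating from $\vphi_0$ that is a limit of the geodesic segments joining $\vphi_0$ to $\vphi_j$. Convexity of $\bfM$ along each such segment (Theorem \ref{thm-convex}(2)) gives $\bfM(\vphi_j(t))\le\bfM(\vphi_0)+\tfrac{t}{t_j}\bigl(\bfM(\vphi_j)-\bfM(\vphi_0)\bigr)$, and letting $j\to\infty$ with lower semicontinuity of $\bfM$ forces $\bfM\equiv\bfM(\vphi_0)$ along $\psi_t$. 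The Berman--Berndtsson rigidity underlying Theorem \ref{thm-convex}(2) then says $\psi_t$ is generated by a holomorphic vector field; since a cscK metric exists, $\Aut(X,L)$ is reductive, and a structure-theoretic argument exploiting that $\tilde\bT$ is a maximal torus (and that $\tilde T$ sits inside the isometry group of $\omega_{\vphi_0}$) shows that along $\psi_t$ the quantity $\inf_{\sigma\in\tilde\bT}\bfJ(\sigma^*\psi_t)$ stays bounded --- contradicting $t_j\to\infty$. This last, delicate bookkeeping is precisely what makes the \emph{reduced} coercivity, rather than ordinary coercivity, the correct notion when $\Aut(X,L)$ is nontrivial.
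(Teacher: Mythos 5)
The paper does not give a proof of this theorem — it is a survey result, stated with citations to \cite{CC21,DR17,BDL20} and followed only by a brief attribution of roles (the Darvas--Rubinstein general principle for the ``existence $\Rightarrow$ reduced coercivity'' direction, Chen--Cheng's a priori estimates for the ``reduced coercivity $\Rightarrow$ existence'' direction, plus the maximal-torus refinement of \cite{Li19,Li20}). Your proposal faithfully reconstructs exactly that strategy: variational minimization over $\cE^1$ with entropy compactness and Chen--Cheng/BDL regularity for sufficiency, and the contradiction argument via geodesic rays, convexity of $\bfM$ (Theorem \ref{thm-convex}), Berman--Berndtsson rigidity, reductivity of $\Aut(X,L)$ and centralizer-of-maximal-torus structure for necessity — so it is correct and matches the approach the paper points to.
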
 
This type of result goes back to Tian's pioneering work in \cite{Tia97} which proves that if $X$ is a Fano manifold with a discrete automorphism group, then the existence of K\"{a}hler-Einstein metric is equivalent to the properness of the $\bfM$-functional, and is also equivalent to the properness of the $\bfD$ functional. Tian's work has since been refined and generalized for other canonical metrics. 
For the necessity direction (from existence to reduced coercivity), there is now a general principle due to Darvas-Rubinstein (\cite{DR17}) that can be applied for all previously mentioned canonical K\"{a}hler metrics. The sufficient direction (from reduced coercivity to existence) for K\"{a}hler-Einstein metrics is re-proved in \cite{BBEGZ} using pluripotential theory, which works equally well in the setting of log Fano pairs. See \cite{BW14, HL20} for the extension to the $g$-soliton case. The existence result for smooth cscK metrics is accomplished recently by Chen-Cheng's new estimates (\cite{CC21}).
The use of maximal torus appears in \cite{Li19, Li20}, refining an earlier formulation of Hisamoto \cite{His16b}. There is also an existence criterion when $\tilde{\bT}$ is replaced by any connected reductive subgroup of $\Aut(X,L)$ that contains a maximal torus.

\section{Stability of algebraic varieties and non-Archimedean geometry}
\subsection{K-stability and non-Archimedean geometry}

The concept of K-stability, as first introduced by Tian and Donaldson, is motivated by results from geometric analysis. On the other hand, the recent development shows that various tools from algebraic geometry are crucial in un-locking many of its mysteries. 
\begin{defn}\label{def-TC}
A test configuration for a polarized manifold $(X, L)$ consists of $(\mcX, \mcL)$ that satisfies:
(i) $\pi: \mcX\rightarrow \bC$ is a flat projective morphism from a normal variety $\mcX$, and $\mcL$ is a $\pi$-semiample $\bQ$-line bundle. 
(ii) There is a $\bC^*$-action on $(\mcX, \mcL)$ such that $\pi$ is $\bC^*$-equivariant. 
(iii)
There is a $\bC^*$-equivariant isomorphism $(\mcX, \mcL)\times_\bC\bC^*\cong (X\times\bC^*, p_1^*L)$. 

Test configuration $(\mcX, \mcL)$ is called a product test configuration if there is a $\bC^*$-equivariant isomorphism $(\mcX, \mcL)\cong (X\times\bC, p_1^*L)$ where the $\bC^*$-action on the right-hand-side is the product action of a $\bC^*$-action on $(X, L)$ with the standard multiplication on $\bC$.

Two test configurations $(\mcX_i, \mcL_i)$, $i=1,2$ are called equivalent if there exists a test configuration $(\mcX', \mcL')$ with $\bC^*$-equivariant birational morphisms $\rho_i: \mcX'\rightarrow \mcX_i$ satisfying $\rho_1^*\mcL_1=\mcL'=\rho_2^*\mcL_2$. 
For any test configuration $(\mcX, \mcL)$, by taking fibre product one can always find an equivalent test configuration $(\mcX', \mcL')$ such that $\mcX'$ dominates $X\times\bC$.

For any test configuration $(\mcX, \mcL)$ there is a canonical compactification over $\bP^1$ denoted by $(\ocX, \ocL)$ which is obtained by adding a trivial fibre over
$\{\infty\}=\bP^1\setminus \bC$. 
\end{defn}

The notion of test configuration is a way to formulate the degeneration of $(X, L)$. In fact any test configuration is induced by a one-parameter subgroup of $\mathrm{PGL}(N+1, \bC)$ for a Kodaira embedding $X\rightarrow \bP^{N}$. 

{We} will continue our discussion in a framework of non-Archimedean geometry as proposed by Boucksom-Jonsson.  
Let $X^\NA$ denote the Berkovich analytification of $X$ {with respect to the trivial absolute value on $\bC$} (see \cite{BFJ15} for references). As a set, $X^\NA$ is a topological space consisting of real valuations on subvarieties of $X$, and contains a dense subset $X^{\mathrm{div}}_\bQ$ consisting of divisorial valuations on $X$. 
Any test configuration $(\mcX, \mcL)$ defines a function on $X^\NA$ in the following way. 
First, up to equivalence we can assume that there is a birational morphism $\rho: \mcX\rightarrow X_\bC:=X\times\bC$. 
Write $\mcL=\rho^*p_1^*L+E$ where $E$ is a $\bQ$-divisor supported on $\mcX_0$. For any $v\in X^\NA$ denote by $G(v)$ the $\bC^*$-invariant semivaluation on $X_\bC$ that satisfies $G(v)|_{\bC(X)}=v$ and $G(v)(t)=1$ where $t$ is the coordinate of $\bC$.   
One then defines:
\begin{equation}\label{eq-phiTC}
\phi_{(\mcX, \mcL)}(v)=G(v)(E), \quad \text{ for any } v\in X^\NA. 
\end{equation}
The set of such functions on $X^\NA$ obtained from test configurations is denoted by $\mcH^\NA$ which is considered as the set of smooth non-Archimedean psh potentials on the analytification of $L$. The following functionals, defined on the space of test configurations, correspond to the Archimedean (i.e. complex analytic) functionals in \eqref{eq-Evphi}-\eqref{eq-Jvphi}. 
\begin{align}
&\bfE^\NA(\mcX, \mcL)=\frac{\ocL^{\cdot n+1}}{(n+1)\bV}, \quad \Lam^\NA(\mcX, \mcL)=\frac{1}{\bV}\ocL^{\cdot n}\cdot \rho^*L_{\bP^1},\label{eq-ENA} \\
&\bfJ^\NA(\mcX, \mcL)=\Lam^\NA(\mcX, \mcL)-\bfE^\NA(\mcX, \mcL), \quad (\bfE^{K_X})^\NA(\mcX, \mcL)=\frac{1}{\bV}K_X\cdot \ocL^{\cdot n}, \label{eq-JNA} \\
& \bfH^\NA=\frac{1}{\bV}K^{\log}_{\ocX/X_{\bP^1}}\cdot \mcL^{\cdot n}, \quad 
\bfM^\NA(\mcX, \mcL)=\bfH^\NA+(\bfE^{K_X})^\NA+\ud{S}\cdot \bfE^\NA  \label{eq-MabNA}
\end{align}
where we assume that $\ocX$ dominates $X_{\bP^1}=X\times\bP^1$ by $\rho$, and $L_{\bP^1}=p_1^*L$, $K^{\log}_{\ocX/X_{\bP^1}}=K_{\ocX}+\mcX^{\mathrm{red}}_0-(\rho^*(K_{X\times\bP^1}+X\times\{0\}))$. 
These functionals were defined before the introduction of the non-Archimedean framework. For example, the $\bfE^\NA$ functional appeared in Mumford's study of Chow stability of projective varieties. 

 Assume that $\mcX_0=\sum_i b_i F_i$ where $F_i$ are irreducible components. Set $v_i=b_i^{-1} \ord_{F_i}\circ p_1^*\in X^{\mathrm{div}}_\bQ$ and let $\delta_{v_i}$ be the Dirac measure supported at $\{v_i\}$. Chambert-Loir defined the following non-Archimedean Monge-Amp\`{e}re measure using the intersection theory.
\begin{equation}\label{eq-TCMA}
\MA^\NA(\phi_{(\mcX, \mcL)})=\sum_i b_i (\mcL^{\cdot n} \cdot F_i) \delta_{v_i}. 
\end{equation}
Mixed non-Archimedean Monge-Amp\`{e}re measures are similarly defined. It then turns out that the functionals from \eqref{eq-ENA}-\eqref{eq-JNA} can be obtained by using the same formula as in \eqref{eq-Evphi}-\eqref{eq-Jvphi} but with the ordinary integrals replaced by corresponding non-Archimedean ones, while the $\bfH^\NA$ has the following expression after \cite{BHJ17}:
\begin{equation}\label{eq-HNA}
\bfH^\NA(\mcX, \mcL)=\int_{X^\NA} A_X(v) \MA^\NA(\phi_{(\mcX, \mcL)})(v).
\end{equation}
Here $A_X$ is a functional defined on $X^{\NA}$ (see \cite{JM12}) that generalizes the log discrepancy functional on $X^{\mathrm{div}}_\bQ$. We can now recall the notion of K-stability:
\begin{defn}\label{def-Kst}
$(X, L)$ is 
$
\Bigg\{
\begin{array}{l}
\text{K-semistable}\\
\text{K-stable}\\
\text{K-polystable}
\end{array}
$
if any non-trivial test configuration $(\mcX, \mcL)$ for $(X, L)$ satisfies:
$
\Bigg\{
\begin{array}{l}
\bfM^\NA(\mcX, \mcL)\ge 0 \\
\bfM^\NA(\mcX, \mcL)>0 \\
\bfM^\NA(\mcX, \mcL)\ge 0 \text{ and } =0 \text{ only if $(\mcX, \mcL)$ is a product test configuration}. 
\end{array}
$
\end{defn}
This is like a Hilbert-Mumford's numerical criterion in the Geometric Invariant Theory. \footnote{In the classical formulation, Tian's CM weight, or equivalently the Donaldson-Futaki invariant is used to define the K-stability. However, to fit our discussion in the non-Archimedean framework, we use the equivalent formulation via the $\bfM^\NA$ functional.} 
The recent development of K-stability involves a strengthened notion called reduced uniform K-stability, 
which matches the reduced coercivity in \eqref{eq-redcoer} (see \cite{BHJ17, Der16, His16b}). Recall that $\tilde{\bT}$ denotes a maximal torus of $\Aut(X, L)$,  and $\tilde{N}_\bQ$ is defined similar to \eqref{eq-Nlattice}. 
\begin{defn}\label{def-uniKst}
$(X, L)$ is 
$
\Bigg\{
\begin{array}{l}
\text{uniformly K-stable}\\
\text{reduced uniformly K-stable}\\
\end{array}
$
if there exists $\gamma>0$ such that any test configuration $(\mcX, \mcL)$ satisfies:
$
\bigg\{
\begin{array}{l}
\bfM^\NA(\mcX, \mcL)\ge \gamma\cdot \bfJ^\NA(\mcX, \mcL) \\
\bfM^\NA(\mcX, \mcL)\ge \gamma\cdot \inf_{\xi\in \tilde{N}_\bQ} \bfJ^\NA(\mcX_\xi, \mcL_\xi). 
\end{array}
$

\end{defn}
Here the twist $(\mcX_\xi, \mcL_\xi)$ is introduced by Hisamoto \cite{His16b}. One way to define it as a test configuration is by resolving the composition of birational morphisms $(\mcX, \mcL)\dasharrow (X_\bC=X\times\bC, L_\bC=p_1^*L)\stackrel{\sigma_{\xi}}{\rightarrow} (X_\bC, L_\bC)$ where $\sigma_{\xi}$ is the $\bC^*$-action generated by $\xi$. Alternatively it can be defined in a more general setting of filtrations (see example \ref{exmp-twist}).

\subsection{Non-Archimedean pluripotential theory}

We discuss how non-Archimedean pluripotential theory as developed by Boucksom-Jonsson can be applied to study K-stability.  
Corresponding to a regularization result in the complex analytic case, an u.s.c. function $\phi: X^\NA\rightarrow \bR\cup \{+\infty\}$ is called a non-Archimedean psh potential if it is a decreasing limit of a sequence from $\mcH^\NA$. Denote the space of such functions by $\PSH^\NA$. 
Boucksom-Jonsson introduced the following non-Archimedean version of the finite energy space. First corresponding to \eqref{eq-Epsh}, for any $\phi\in \PSH^\NA$, define:
\begin{equation*}
\bfE^\NA(\phi)=\inf\{\bfE^\NA(\tilde{\phi}); \tilde{\phi}\ge \phi, \tilde{\phi}\in \mcH^\NA\}. 
\end{equation*}
Then, corresponding to \eqref{eq-E1A}, define the space of non-Archimedean finite energy potentials:
\begin{equation*}\label{eq-E1NA}
(\cE^1)^\NA=\{\phi\in \PSH^\NA; \bfE^\NA(\phi)>-\infty\}. 
\end{equation*}
This space is again equipped with a strong topology which makes $\bfE^\NA$ continuous. 
Boucksom-Jonsson in \cite{BoJ18a} showed that  non-Archimedean Monge-Amp\`{e}re measure $\MA^\NA(\phi)$ is well-defined for any $\phi\in (\cE^1)^\NA$ such that if $\{\phi_k\}_{k\in \bN}\subset \mcH^\NA$ converges to $\phi$ strongly, then $\MA^\NA(\phi_k)$ converges to $\MA^\NA(\phi)$ weakly. 

A large class of potentials come from filtrations (see \cite{BHJ17}). Set $R_m=H^0(X, mL)$.  
\begin{defn}
A filtration is the data $\cF=\{\cF^\lambda R_m \subseteq R_m; \lambda\in \bR, m\in \bN\}$ that satisfies the following four conditions:\\
(i)
$\mcF^\lambda R_m\subseteq \mcF^{\lambda'}R_m$, if $\lambda\ge \lambda'$;\\
(ii)
$\mcF^\lambda R_m=\bigcap_{\lambda'<\lambda}\mcF^{\lambda'}R_m$; \\ 
(iii) 
$\mcF^\lambda R_m\cdot \mcF^{\lambda'} R_{m'}\subseteq \mcF^{\lambda+\lambda'}R_{m+m'}$, for $\lambda, \lambda'\in \bR$ and $m, m'\in \bN$; \\
(iv)
There exist $e_-, e_+\in \bZ$ such that $\mcF^{m e_-} R_m=R_m$ and $\mcF^{m e_+} R_m=0$ for $m\in \bZ_{\ge 0}$. 
\vskip 0.5mm

$\cF$ is finitely generated if its extended Rees algebra $\mathcal{R}(\cF)$ is finitely generated where
\begin{equation*}
\mathcal{R}(\cF)=\bigoplus_{\lambda\in \bR}\bigoplus_{m\in \bN} t^{-\lambda} \cF^{\lambda}R_m.
\end{equation*}
In this case $\cF$ induces a degeneration of $X$ into $\mcX_0=\mathrm{Proj}\big(\bigoplus_{m,\lambda}\cF^\lambda R_m/\cF^{>\lambda}R_m\big)$. 

{
For a general $\cF$, $\{\cF^{\lambda} R_\ell; \lambda\in \bR\}$ generates a filtration $\check{\cF}^{(\ell)}$ on $R^{(\ell)}:=\bigoplus_{m\in \bN}R_{m\ell}$, which induces a non-Archimedean psh potential $\check{\phi}^{(\ell)}\in \mcH^\NA$. Define
$\phi_{\cF}=(\limsup_{\ell\rightarrow+\infty} \check{\phi}^{(\ell)})^*$ where $(\cdot)^*$ denotes the upper semicontinuous regularization. }
\end{defn}
{
\begin{exmp}
Filtration $\cF$ is a $\bZ$-filtration if $\cF^\lambda R_m=\cF^{\lceil \lambda\rceil}R_m$. 
By \cite{BHJ17, WN12, Sze15}, there is a one-to-one correspondence between test configurations equipped with relatively ample $\bQ$-polarizations and finitely generated $\bZ$-filtrations. 
Any test configuration $(\mcX, \mcL)$ defines such a filtration:
\begin{equation}\label{eq-FTC}
\cF^\lambda R_m=\{s\in R_m; t^{-\lceil \lambda \rceil} s\in H^0(\mcX, m\mcL)\}.
\end{equation}
Conversely, if $\cF$ is a finitely generated $\bZ$-filtration, then $(\mcX:=\mathrm{Proj}_{\bC[t]}(\mathcal{R}(\check{\cF}^{(\ell)})), \frac{1}{\ell}\mathcal{O}_{\mcX}(1))$ is a test configuration for $\ell$ sufficiently divisible.  
\end{exmp}
}

\begin{exmp}
In the Definition \ref{def-TC} of test configurations, if we do not require $\mcL$ to be $\pi$-semiample, then we call $(\mcX, \mcL)$ a model (of $(X\times\bC, p_1^*L)$). The same definition in \eqref{eq-FTC} defines a filtration also denoted by $\cF_{(\mcX, \mcL)}$. However in general the filtration is not finitely generated anymore. 
Fix any model $(\mcX, \mcL)$ such that $\ocL$ is big over $\ocX$  (we call such $(\mcX, \mcL)$ a big model). In \cite{Li21} we obtained the following formula for the non-Archimedean Monge-Amp\`{e}re measure of $\phi=\phi_{(\mcX, \mcL)}:=\phi_{\cF_{(\mcX, \mcL)}}$ which generalizes \eqref{eq-TCMA}: 
\begin{equation}\label{eq-modMA}
\MA^\NA(\phi)=\sum_i b_i\big( \big\la \ocL^{\cdot n}\big\ra \cdot E_i \big) \delta_{v_i}.
\end{equation}
Here for any divisor $D$, we use the notion of positive intersection product introduced in \cite{BFJ09}:
\begin{align*}
&\la\ocL^{\cdot n+1}\ra=\vol(\ocL)=\lim_{m\rightarrow+\infty} \frac{h^0(\ocX, m \ocL)}{\frac{m^{n+1}}{(n+1)!}} \;\; \text{ and }\;\; \la \ocL^{\cdot n}\ra \cdot D=\frac{1}{n+1}\frac{d}{dt}\Big|_{t=0} \vol(\ocL+t D), 
\end{align*} 
\end{exmp}

\begin{exmp}
Any $v\in X^{\mathrm{div}}_\bQ$ defines a filtration: for any $\lambda\in \bR$ and $m\in \bZ_{\ge 0}$, define:
\begin{equation}\label{eq-filval}
\cF^\lambda_v R_m=\{s\in R_m; v(s)\ge \lambda\}. 
\end{equation}
Boucksom-Jonsson proved in \cite{BoJ18b} that $\MA^\NA(\phi_{\cF_v})=\bV\cdot \delta_v$. 
\end{exmp}

\begin{exmp}\label{exmp-twist}
Assume a torus $\tilde{\bT}\cong (\bC^*)^r$-acts on $(X, L)$. Then we have a weight decomposition $R_m=\bigoplus_{\alpha\in \bZ^r} R_{m,\alpha}$. 
For any $\xi\in \tilde{N}_\bR$, we can define the $\xi$-twist of a given filtration:
$\cF^\lambda_\xi R_m=\cF^{\lambda-\la \alpha, \xi\ra}R_{m,\alpha}$.
On the other hand, there is an induced $\tilde{N}_\bR$-action on $(X^\NA)^{\tilde{\bT}}$ which sends $(\xi, v)$ to $v_\xi\in (X^\NA)^{\tilde{\bT}}$ determined by the following condition: if $f\in \bC(X)_\alpha$ which means $f\circ\mathrm{t}^{-1}=\mathrm{t}^\alpha \cdot f$ for any $\mathrm{t}\in \tilde{\bT}$, then $v_\xi(f)=\la \alpha, \xi\ra+v(f)$. We then have the following formula $\MA^\NA(\phi_{\cF_\xi})=(-\xi)_* \MA^\NA(\phi_\cF)$ (see \cite{Li19, Li20}). 
\end{exmp}
Generalizing the case of test configurations, Boucksom-Jonsson showed that the non-Archimedean functionals from \eqref{eq-ENA}-\eqref{eq-MabNA} are well defined for all $\phi\in (\cE^1)^\NA$ by using integrals over $X^\NA$ mentioned before (for example, for $\bfH^\NA$ use \eqref{eq-HNA}).  
\begin{exmp}\label{exmp-Efil}
For any filtration $\cF$, it is known that $\phi_{\cF}\in (\cE^1)^\NA$. Following \cite{BHJ17}, define 
\begin{equation*}
\vol(\cF^{(t)})=\lim_{m\rightarrow+\infty} \frac{\dim_\bC \cF^{mt}R_m}{m^n/n!}.
\end{equation*}
Then $\bfE^\NA$ is an `expected vanishing order' with respect to $\cF$ (see \cite{BoJ18b}):
\begin{equation}\label{eq-ENAfil}
\bfE^\NA(\phi_{\cF})=\frac{1}{\bV}\int_{\bR} t (-d\vol(\cF^{(t)})).
\end{equation}
\end{exmp}
Similar to Theorem \ref{thm-Freg}, we also have important regularization properties:
\begin{theorem}[\cite{BoJ18a}]
For any $\phi\in (\cE^1)^\NA$, there exist $\{\phi_k\}_{k\in \bN}\subset \mcH^\NA$ (i.e. $\phi_k=\phi_{(\mcX_k, \mcL_k)}$ for a test configuration $(\mcX_k, \mcL_k))$ such that  $\phi_k\rightarrow \phi$ in the strong topology and 
$\bfF^\NA(\phi_k)\rightarrow \bfF^\NA(\phi)$ for $\bfF\in \{\bfE, \Lam, \bfE^{K_X}\}$. 
\end{theorem}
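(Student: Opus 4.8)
The approximating sequence costs nothing: by the definition of $\PSH^\NA$, every $\phi\in(\cE^1)^\NA$ is already a decreasing limit of potentials $\phi_k=\phi_{(\mcX_k,\mcL_k)}\in\mcH^\NA$ attached to test configurations, so the whole content is the convergence of the topology and of the three functionals. My plan is (i) to upgrade $\phi_k\searrow\phi$ to convergence in the strong topology, and then (ii) to deduce convergence of $\Lam^\NA$ and $(\bfE^{K_X})^\NA$ by the same mechanism that governs $\bfE^\NA$.

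For (i): a decreasing sequence converges weakly, and the strong topology is by definition the coarsest refinement of the weak one making $\bfE^\NA$ continuous, so it is enough to show $\bfE^\NA(\phi_k)\to\bfE^\NA(\phi)$. Monotonicity of $\bfE^\NA$ and the infimum formula defining it on $\PSH^\NA$ give $\bfE^\NA(\phi_k)\searrow\ell\ge\bfE^\NA(\phi)$; the opposite inequality is the continuity of $\bfE^\NA$ along decreasing sequences, a basic feature of the theory which --- exactly as in the complex-analytic case (Theorem~\ref{thm-Freg}) --- comes from the continuity of volumes under decreasing limits of big classes (recall $\bfE^\NA(\mcX,\mcL)=\ocL^{\cdot n+1}/((n+1)\bV)$), equivalently from the weak continuity of $\MA^\NA$ along decreasing sequences together with monotone convergence in $\int_{X^\NA}\phi_k\,\MA^\NA(\phi_k)$. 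Since $\bfE^\NA(\phi)>-\infty$ all the potentials lie in $(\cE^1)^\NA$, and we conclude $\phi_k\to\phi$ strongly.

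For (ii): $\Lam^\NA$ is the easy one --- up to the factor $\bV$ it is the integral of the potential against the fixed reference measure $\MA^\NA(\phi_{\triv})$, which is the Dirac mass at the trivial valuation $v_{\triv}$, so $\Lam^\NA(\psi)=\psi(v_{\triv})$; this is continuous along decreasing sequences for free, and finite in the limit because $\bfJ^\NA=\Lam^\NA-\bfE^\NA\ge0$ forces $\Lam^\NA(\phi)\ge\bfE^\NA(\phi)>-\infty$. For $(\bfE^{K_X})^\NA$ I would write $K_X=A_1-A_2$ with $A_1,A_2$ ample $\bQ$-line bundles, so that $(\bfE^{K_X})^\NA=(\bfE^{A_1})^\NA-(\bfE^{A_2})^\NA$, where each $(\bfE^{A_i})^\NA$ is a mixed-energy functional: $\frac{1}{\bV}$ times a sum of integrals of the potential against mixed non-Archimedean Monge-Amp\`{e}re measures of that potential with a fixed reference metric on $A_i$, equivalently $\frac{1}{(n+1)\bV}$ times the derivative at $\epsilon=0$ of $\vol(\ocL+\epsilon\,\rho^*p_1^*A_i)$. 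Convergence along $\phi_k\searrow\phi$ reduces once more to continuity of (mixed) volumes, or of the mixed $\MA^\NA$ operator, combined with the uniform integrability furnished by the finite-energy hypothesis --- run exactly as in (i) --- giving $(\bfE^{K_X})^\NA(\phi_k)\to(\bfE^{K_X})^\NA(\phi)$.

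The technical heart, and the reason $\bfH^\NA$ is absent from the list, is the monotone weak continuity of the (mixed) non-Archimedean Monge-Amp\`{e}re operator on $(\cE^1)^\NA$ together with the attached uniform-integrability estimate; this is precisely the Boucksom-Jonsson input recalled above and it rests on their orthogonality and volume-continuity theory. The entropy does not fit this scheme because $A_X$ is lower semicontinuous but unbounded on $X^\NA$, so weak convergence $\MA^\NA(\phi_k)\to\MA^\NA(\phi)$ does not transmit to $\int_{X^\NA}A_X\,\MA^\NA(\phi_k)$; regularizing with converging entropy requires the deeper non-Archimedean analogue of the Berman-Darvas-Lu theorem, via approximation by divisorial (or $\bQ$-degeneration) potentials with controlled log discrepancy.
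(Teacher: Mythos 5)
This is a survey article that cites the result to \cite{BoJ18a} without reproducing a proof, so there is no in-paper argument to compare against. On its own terms your outline is essentially correct and I believe it matches the strategy of \cite{BoJ18a}: the approximating sequence is already furnished by the definition of $\PSH^\NA$; $\Lam^\NA$ reduces to evaluation at $v_{\triv}$ (finite by $\bfJ^\NA\ge 0$) and converges along decreasing sequences for free; $(\bfE^{K_X})^\NA$ reduces, via $K_X=A_1-A_2$ with $A_i$ ample, to twisted energies obeying the same monotone continuity as $\bfE^\NA$; and the reason $\bfH^\NA$ is excluded is exactly that $A_X$ is unbounded, so weak convergence of $\MA^\NA$ does not transmit to the entropy integral. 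You also correctly identify that the load-bearing step is continuity of $\bfE^\NA$ (and of the mixed energies) along decreasing sequences, and that this rests on the Boucksom--Jonsson theory of the non-Archimedean Monge--Amp\`{e}re operator.

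The one thing I would tighten is the framing of that load-bearing step. The infimum definition of $\bfE^\NA$ on $\PSH^\NA$ only yields $\lim_k\bfE^\NA(\phi_k)\ge\bfE^\NA(\phi)$ for a decreasing sequence $\phi_k\searrow\phi$ from $\mcH^\NA$; the reverse inequality is a genuine structural theorem of \cite{BoJ18a} (resting on envelopes, orthogonality, and monotone continuity of the non-Archimedean Monge--Amp\`{e}re operator over a trivially valued field of characteristic zero), not a formal consequence of the definition, and the phrase ``exactly as in the complex-analytic case'' undersells how much non-Archimedean machinery had to be built. The analogy with Theorem~\ref{thm-Freg} is also imperfect: there the content is regularity (approximation by \emph{smooth} K\"{a}hler potentials with converging entropy), whereas here the analogue of smoothness is already built into $\mcH^\NA$ and all the content lies in the monotone-continuity theorems. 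Provided these are quoted explicitly as the key inputs, your sketch is sound.
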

Boucksom-Jonsson conjectured that the same conclusion should also hold for $\bfH^\NA$. This conjecture is still open in general and it is important in the non-Archimedean approach to the YTD conjecture. We have made progress in this direction.
\begin{theorem}[\cite{Li20, Li21}]\label{thm-appmodel}
$\mathrm{(1)}$  For any $\phi\in (\cE^1)^\NA$, there exist models $\{(\mcX_k, \mcL_k)\}_{k\in \bN}$ such that $\phi_k=\phi_{(\mcX_k, \mcL_k)}\rightarrow \phi$ in the strong topology and $\bfH^\NA(\phi_k)\rightarrow \bfH^\NA(\phi)$. \\ 
$\mathrm{(2)}$  
For any big model $(\mcX, \mcL)$, we have the following formula that generalizes \eqref{eq-MabNA}:
\begin{equation*}
\bfM^\NA(\mcX, \mcL)=\frac{1}{\bV}\big\la \ocL^{\cdot n}\big\ra\cdot K_{\ocX/\bP^1}+\frac{\ud{S}}{(n+1)\bV}\big\la \ocL^{\cdot n+1}\big\ra. 
\end{equation*}
\end{theorem}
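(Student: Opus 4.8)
The two parts are rather different in character, so I would treat them separately. For part (2), which is the more computational one, the plan is to express $\bfM^\NA(\mcX, \mcL)$ as the limit of its values along a sequence of test configurations dominating $X_{\bP^1}$ and then pass to the limit. Concretely, I would first approximate the big model $(\mcX,\mcL)$ by test configurations $(\mcX_k,\mcL_k)$ using part (1) (or the weaker test-configuration regularization of $\{\bfE,\Lam,\bfE^{K_X}\}$ together with part (1) for $\bfH^\NA$), arranged so that $\mcX_k$ dominates $X_{\bP^1}$ via $\rho_k$. For honest test configurations the identity $\bfM^\NA(\mcX_k,\mcL_k)=\bfH^\NA+(\bfE^{K_X})^\NA+\ud S\,\bfE^\NA$ is \eqref{eq-MabNA}, which after rewriting $\bfH^\NA+(\bfE^{K_X})^\NA$ via adjunction becomes $\frac{1}{\bV}\ocL_k^{\cdot n}\cdot K_{\ocX_k/\bP^1}+\frac{\ud S}{(n+1)\bV}\ocL_k^{\cdot n+1}$: indeed $K^{\log}_{\ocX_k/X_{\bP^1}}\cdot \mcL_k^{\cdot n}+K_X\cdot\ocL_k^{\cdot n}=K_{\ocX_k/\bP^1}\cdot\ocL_k^{\cdot n}$ after cancelling the $\rho_k^*$-pulled-back terms, using that the reduced-fibre correction contributes to the log relative canonical exactly so as to turn intersection with $\mcL_k$ into intersection with $\ocL_k$ (and this is where one uses that $\mcX_0$ versus $\mcX_0^{\red}$ discrepancy is supported on the central fibre and paired against a $\pi$-ample class). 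The only nontrivial step is the continuity of each term as $k\to\infty$: $\bfH^\NA$ converges by part (1), and the intersection numbers $\ocL_k^{\cdot n}\cdot K_{\ocX_k/\bP^1}$ and $\ocL_k^{\cdot n+1}$ converge to the positive-intersection products $\la\ocL^{\cdot n}\ra\cdot K_{\ocX/\bP^1}$ and $\la\ocL^{\cdot n+1}\ra=\vol(\ocL)$ by the differentiability of the volume function along the approximating sequence, which is precisely the content of \eqref{eq-modMA} and the Boucksom–Favre–Jonsson theory of positive intersection products. So part (2) reduces to citing these two convergence facts plus one adjunction bookkeeping lemma.

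For part (1), the plan is to mimic in the non-Archimedean setting the strategy of \cite{BDL17} recalled after Theorem \ref{thm-Freg}: regularize the \emph{measure} first, with converging entropy, and then solve a Monge–Amp\`ere-type equation to recover a potential. Given $\phi\in(\cE^1)^\NA$ with $\bfH^\NA(\phi)=\int_{X^\NA}A_X\,\MA^\NA(\phi)<\infty$, the measure $\mu=\MA^\NA(\phi)$ is a probability measure on $X^\NA$ with finite $A_X$-energy. First I would approximate $\mu$ by measures $\mu_k$ that are finite convex combinations of divisorial Dirac masses $\delta_{v_i}$ (such measures are dense in an appropriate sense, e.g. by pushing forward under retractions to dual complexes of SNC models), chosen so that $\int A_X\,d\mu_k\to\int A_X\,d\mu$ — this is possible because $A_X$ is lower semicontinuous and the retractions are compatible with it. Each $\mu_k$ is of the form appearing on the right of \eqref{eq-modMA}: by \cite{Li21} one can realize $\mu_k=\MA^\NA(\phi_k)$ for $\phi_k=\phi_{(\mcX_k,\mcL_k)}$ with $(\mcX_k,\mcL_k)$ a big model — solving the relevant non-Archimedean Monge–Amp\`ere equation is exactly the statement that $\MA^\NA$ is onto the space of such measures, which is available from the Boucksom–Jonsson variational theory (the non-Archimedean analogue of Yau's theorem). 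One then checks $\phi_k\to\phi$ strongly (from $\MA^\NA(\phi_k)\to\MA^\NA(\phi)$ plus normalization of $\sup$, using that $\bfE^\NA$ and the mixed energies are controlled) and $\bfH^\NA(\phi_k)=\int A_X\,d\mu_k\to\int A_X\,d\mu=\bfH^\NA(\phi)$.

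The main obstacle is the convergence $\bfH^\NA(\phi_k)\to\bfH^\NA(\phi)$, i.e. making the measure-regularization respect the entropy. As in the Archimedean case this is genuinely delicate: $A_X$ is only lower semicontinuous on $X^\NA$, so naive weak convergence $\mu_k\rightharpoonup\mu$ gives only $\liminf\int A_X\,d\mu_k\ge\int A_X\,d\mu$, and the reverse inequality is what forces us to construct the $\mu_k$ carefully (via model retractions, which do not increase $A_X$, rather than by arbitrary approximation) and to keep the masses from escaping to the non-divisorial part of $X^\NA$ where $A_X=+\infty$. A secondary difficulty is the surjectivity of $\MA^\NA$ onto combinations of divisorial masses together with the requisite regularity of the solution (enough to identify it with a big-model potential and to apply \eqref{eq-modMA}); this is where the results of \cite{Li21} and \cite{BoJ18a,BoJ18b} do the heavy lifting, so in the write-up I would isolate precisely the two inputs — (a) $A_X$-continuity along model retractions and (b) solvability of $\MA^\NA(\cdot)=\mu_k$ within big models — and reduce the theorem to them.
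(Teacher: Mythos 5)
Your proposal for part (1) matches the paper's strategy closely: regularize the Monge--Amp\`ere measure $\MA^\NA(\phi)$ by discrete measures supported on finitely many divisorial valuations, solve the non-Archimedean Monge--Amp\`ere equation (via \cite{BFJ15}) to recover potentials associated to models, and verify strong convergence together with entropy convergence. Your identification of the key technical obstruction --- that $A_X$ is only lower semicontinuous, so the approximating measures must be built through model retractions rather than by arbitrary weak approximation, in order to prevent $\limsup\int A_X\,d\mu_k$ from exceeding $\int A_X\,d\mu$ --- is exactly the right point to isolate, and is also the reason the approximation is by \emph{models} rather than by test configurations (there is no known characterization of which measures arise as $\MA^\NA$ of a test-configuration potential).

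Your proposal for part (2), however, has a genuine gap which in fact inverts the logical order in the paper. You propose to approximate the big model $(\mcX,\mcL)$ by test configurations $(\mcX_k,\mcL_k)$, invoke \eqref{eq-MabNA} for each $k$, and pass to the limit; the limit passage requires two inputs, neither of which is available. First, $\bfH^\NA(\phi_k)\to\bfH^\NA(\phi)$ along a \emph{test-configuration} regularization is precisely the open Boucksom--Jonsson regularization conjecture --- part (1) of the theorem you are proving only provides \emph{model} approximations (and if you approximated by big models you would be in a circle, since the formula being proved is for big models). Second, the claimed convergence $\ocL_k^{\cdot n}\cdot K_{\ocX_k/\bP^1}\to\la\ocL^{\cdot n}\ra\cdot K_{\ocX/\bP^1}$ along Fujita-type approximations is \emph{not} a consequence of Boucksom--Favre--Jonsson differentiability of volume: BFJ give the derivative formula, but $K_{\ocX/\bP^1}$ is not nef, and whether a single approximating sequence can be chosen to simultaneously compute both $\vol(\ocL)$ and the derivative in the $K$-direction is exactly the content of the open Conjecture \ref{conj-Fujapp}. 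In the paper the logical arrows point the other way: part (2) is proved \emph{directly} from the explicit formula \eqref{eq-modMA} for $\MA^\NA(\phi_{(\mcX,\mcL)})$ of a big model --- one plugs \eqref{eq-modMA} into \eqref{eq-HNA} to write $\bfH^\NA$ as a positive intersection against a divisor built from log discrepancies, combines with the analogous positive-intersection expressions for $\bfE^\NA$ and $(\bfE^{K_X})^\NA$ from \cite{Li21}, and finishes by the adjunction bookkeeping you correctly describe --- and it is this theorem that then \emph{motivates} Conjecture \ref{conj-Fujapp}, not the other way around. So the adjunction lemma in your write-up is fine, but the limiting argument should be replaced by a direct application of \eqref{eq-modMA}.
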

The idea for proving the first statement is similar to the Archimedean setting in \cite{BDL17}. First we regularize the measure $\MA^\NA(\phi)$ with converging entropy. In fact we find a way to regularize it by using measures supported at finitely many points in $X^{\mathrm{div}}_\bQ$. 
Then we use the solution of non-Archimedean Monge-Amp\`{e}re equations obtained in \cite{BFJ15} to get the wanted potentials which are known to be associated to models.  
However, in the non-Archimedean case, there is not yet a characterization of measures associated to test configurations which prevents us from regularizing via test configurations.  
The second statement in Theorem \ref{thm-appmodel} follows from the formula \eqref{eq-modMA}, and it prompts us to propose the following algebro-geometric conjecture which would strengthen the classical Fujita approximation theorem. 
\begin{conj}\label{conj-Fujapp}
Let $\ocX$ be a smooth $(n+1)$-dimensional smooth projective variety. Let $\ocL$ be a big line bundle over $\ocX$. Then there exist birational morphisms $\mu_k: \ocX_k\rightarrow \ocX$ and decompositions $\mu_k^*\ocL=\ocL_k+E_k$ in $N^1(\ocX)_\bQ$ with $\ocL_k$ semiample and $E_k$ effective such that
\begin{equation*}
\lim_{k\rightarrow+\infty} \ocL_k^{\cdot n+1}=\vol(\ocL), \quad \lim_{k\rightarrow+\infty} \ocL_k^{\cdot n}\cdot K_{\ocX_k}=\frac{1}{n+1}\frac{d}{dt}\vol(\ocL+t K_{\ocX})\Big|_{t=0}=:\la \ocL^{\cdot n}\ra\cdot K_{\ocX}.
\end{equation*}

\end{conj}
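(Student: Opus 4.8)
The plan is to promote the classical Fujita approximation to one that also controls intersection numbers against the relative canonical divisor, the extra ingredients being the theory of positive intersection products and the differentiability of the volume function of Boucksom--Favre--Jonsson \cite{BFJ09}.

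First I would fix $m\gg 1$ and take a principalization $\mu_m\colon\ocX_m\to\ocX$ of the base ideal $\mathfrak b(|m\ocL|)$ (a composition of blow-ups of smooth centers, so $\ocX_m$ stays smooth), writing $\mu_m^*\ocL=\ocL_m+E_m$ with $\ocL_m$ semiample and $E_m\ge0$ the normalized fixed part of $|m\ocL|$ divided by $m$; here every $\mu_m$-exceptional prime divisor occurs in $E_m$ with positive coefficient, so $\supp(E_m)=\mathrm{Exc}(\mu_m)$. Passing to a subsequence $m=m_k\to\infty$ and replacing the models by common refinements so that $\{(\ocX_k,\ocL_k)\}$ is a directed system, classical Fujita approximation gives $\ocL_k^{\cdot n+1}\to\vol(\ocL)$, and the description of the positive intersection product as the monotone limit of Fujita approximations \cite{BFJ09} gives $(\mu_k)_*(\ocL_k^{\cdot n})\to\langle\ocL^{\cdot n}\rangle$ in $N_1(\ocX)_\bR$. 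Hence by the projection formula $\ocL_k^{\cdot n}\cdot\mu_k^*K_{\ocX}=(\mu_k)_*(\ocL_k^{\cdot n})\cdot K_{\ocX}\to\langle\ocL^{\cdot n}\rangle\cdot K_{\ocX}$. Writing $K_{\ocX_k}=\mu_k^*K_{\ocX}+F_k$ with $F_k\ge0$ effective and $\mu_k$-exceptional (each coefficient $\ge1$, as $\ocX$ is smooth), one has
\[
\ocL_k^{\cdot n}\cdot K_{\ocX_k}=\ocL_k^{\cdot n}\cdot\mu_k^*K_{\ocX}+\ocL_k^{\cdot n}\cdot F_k,\qquad \ocL_k^{\cdot n}\cdot F_k\ge0,
\]
so $\liminf_k\ocL_k^{\cdot n}\cdot K_{\ocX_k}\ge\langle\ocL^{\cdot n}\rangle\cdot K_{\ocX}$ automatically, and the conjecture becomes equivalent to the \emph{generalized orthogonality relation} $\ocL_k^{\cdot n}\cdot F_k\to0$. (Equivalently: since $\ocL_k$ is nef and big, $\ocL_k^{\cdot n}\cdot K_{\ocX_k}=\tfrac{1}{n+1}\tfrac{d}{dt}\big|_{t=0}\vol_{\ocX_k}(\ocL_k+tK_{\ocX_k})$ and the target equals $\tfrac{1}{n+1}\tfrac{d}{dt}\big|_{t=0}\vol_{\ocX}(\ocL+tK_{\ocX})$, so one wants these derivatives at $0$ to converge along the sequence.)

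The relation $\ocL_k^{\cdot n}\cdot F_k\to0$ is the main obstacle. The classical orthogonality of Fujita approximation \cite{BFJ09} gives the analogous statement for the fixed part, $\ocL_k^{\cdot n}\cdot E_k\to0$; since $\supp(F_k)=\supp(E_k)$, writing $F_k=\sum_G a_G\,G$ and $E_k=\sum_G c_G\,G$ over the same primes yields $\ocL_k^{\cdot n}\cdot F_k\le(\max_G a_G/c_G)\,\ocL_k^{\cdot n}\cdot E_k$. But this bound is too crude: the discrepancies $a_G$ of the divisors extracted in principalizing $\mathfrak b(|m_k\ocL|)$ grow with $m_k$, whereas $c_G\ge1/m_k$ and $\ocL_k^{\cdot n}\cdot E_k$ is of order $1/m_k$, so the ratio need not stay bounded. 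To make it work I would either (i) build the sequence of models more carefully, interleaving the base-ideal principalizations with auxiliary blow-ups so that each newly extracted exceptional divisor gains a coefficient in the fixed part comparable to its discrepancy — equivalently, using models with only mild (say, toroidal) singularities along which $K_{\ocX_k/\ocX}$ is dominated by $E_k$ — obtaining $\ocL_k^{\cdot n}\cdot F_k\le C\,\ocL_k^{\cdot n}\cdot E_k$ with $C$ uniform in $k$; or (ii) bypass the divisor-by-divisor estimate and prove $\ocL_k^{\cdot n}\cdot F_k\to0$ analytically, in the spirit of the pluripotential proof of classical orthogonality, e.g.\ by comparing masses of non-pluripolar Monge--Amp\`ere measures, or by using Newton--Okounkov bodies on $\ocX_k$ adapted to $\ocL_k$ in which $\ocL_k^{\cdot n}\cdot K_{\ocX_k}$ can be read off and passed to the limit. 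I expect this last point — the generalized orthogonality — to be the crux; the reduction to it is routine once the positive-intersection-product formalism is in place.
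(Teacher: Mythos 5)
The statement you set out to prove is labeled a \emph{conjecture} in the paper and remains open: the paper contains no proof of it, only the observations that it holds whenever $\ocL$ admits a birational Zariski decomposition (in particular when $\ocX$ is a Mori dream space, which covers big models of polarized spherical manifolds), and that the author has checked it on some of Nakayama's examples of big divisors that do not admit such decompositions. So there is no proof in the paper against which your attempt can be measured.

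That said, your reduction is sound and locates the genuine obstruction. After writing $K_{\ocX_k}=\mu_k^*K_{\ocX}+F_k$ with $F_k$ effective and $\mu_k$-exceptional, and granting that the pushforwards of the Fujita approximants converge to the positive intersection product, the conjecture becomes the ``generalized orthogonality'' $\ocL_k^{\cdot n}\cdot F_k\to 0$. As you note, the classical orthogonality $\ocL_k^{\cdot n}\cdot E_k\to 0$ of \cite{BFJ09} does not imply this, because the log discrepancies appearing in $F_k$ grow with $m_k$ while the multiplicities in $E_k$ are only $O(1/m_k)$; a coefficient-by-coefficient comparison gives no uniform bound. This is precisely why the statement remains a conjecture, and you rightly do not claim to close the gap---your items (i) and (ii) are plausible directions of attack, not arguments. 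A secondary point worth tightening: your assertion that $(\mu_k)_*(\ocL_k^{\cdot n})\to\la\ocL^{\cdot n}\ra$ in all of $N_1(\ocX)_\bR$ (rather than merely when paired against $\ocL$ itself, or against pseudoeffective classes) needs a justification; it should follow from the directedness of the net of Fujita approximations together with the finite dimensionality of $N_1(\ocX)_\bR$, but this should be spelled out, since $K_{\ocX}$ need not be pseudoeffective and the positive intersection product is defined as a least upper bound rather than an a priori limit. In short, you have given a correct and instructive \emph{reduction}, not a proof, of what is an open conjecture.
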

It is easy to show that this conjecture is true if $\ocL$ admits a birational Zariski decomposition. The author verified this conjecture for certain examples of big line bundles due to Nakamaya which do not admit such decompositions. Y. Odaka observed that when $(\mcX, \mcL)$ is a big model of a polarized spherical manifold (for example a polarized toric manifold), $\ocX$ is a Mori dream space which implies that $\ocL$ admits a Zariski decomposition and hence the above conjecture holds true.  

\subsection{Stability of Fano varieties}
In this section, we assume that $X$ is a $\bQ$-Fano variety (i.e. $-K_X$ is an ample $\bQ$-line bundle and $X$ has at worst klt singularities).
Corresponding to \eqref{eq-Dvphi}, we have a non-Archimedean $\bfD$ functional. For general test configurations it first appeared in Berman's work \cite{Berm15} and was reformulated in \cite{BHJ17} using non-Archimedean potentials:
\begin{align*}
&\bfL^\NA(\mcX, \mcL)=\inf_{v\in X^{\mathrm{div}}_\bQ}(A_X(v)+\phi_{(\mcX, \mcL)}(v)), \quad \bfD^\NA(\mcX, \mcL)=-\bfE^\NA(\mcX, \mcL)+\bfL^\NA(\mcX, \mcL). 
\end{align*} 
The notion of Ding-stability and uniform Ding-stability are defined if $\bfM^\NA$ is replaced by $\bfD^\NA$ in Definition \ref{def-Kst} and \ref{def-uniKst}.  
In general we have the inequality:
$\bfM^\NA(\mcX, \mcL)\ge \bfD^\NA(\mcX, \mcL)$.  
For Fano varieties, special test configurations play important roles. A test configuration $(\mcX, \mcL)$ is called special if the central fibre $\mcX_0$ is a $\bQ$-Fano variety and $\mcL=-K_{\mcX/\bP^1}$. 
For special test configurations, we have $\bfD^\NA=\bfM^\NA=-\bfE^\NA=:\Fut_{\mcX_0}(\xi)$, the last quantity
being the Futaki invariant on $\mcX_0$ for the holomorphic vector field $\xi$ that generates the $\bC^*$-action. 
The importance of special test configurations was first pointed out in Tian's work \cite{Tia97} motivated by compactness results from metric geometry. The following results show their importance from the point of view of algebraic geometry: 
\begin{theorem}[\cite{Fuj19, LX14, Li19}, see also \cite{BBJ18}]\label{thm-special}
For any $\bQ$-Fano variety, K-stability is equivalent to Ding-stability, and they are equivalent to 
 K-stability or Ding-stability over special test configurations.
Moreover, the same conclusion holds true if stability is replaced by semi-stability, polystability, or reduced uniform stability. 
\end{theorem}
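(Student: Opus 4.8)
\medskip
\noindent The plan is to close a cycle of implications among the five notions in which every step except one is formal, the substantive step being the reduction of an arbitrary test configuration to a special one. I would begin from the pointwise inequality $\bfM^\NA(\mcX,\mcL)\ge\bfD^\NA(\mcX,\mcL)$, which becomes an equality for special test configurations. This yields for free that Ding-(semi/poly)stability implies K-(semi/poly)stability, that reduced uniform Ding-stability implies reduced uniform K-stability, and --- merely by shrinking the class of competitors --- that K-stability implies K-stability over special test configurations, and likewise for each flavor. Because $\bfD^\NA=\bfM^\NA$ on special test configurations, ``K-stability over special test configurations'' \emph{is} the same condition as ``Ding-stability over special test configurations'' (and the same for the semistable, polystable and reduced uniform versions). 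Hence the whole theorem reduces to a single implication: Ding-stability over special test configurations implies Ding-stability over all test configurations, together with its semistable, polystable and reduced uniform analogues.

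For that implication I would invoke the MMP construction of Li--Xu and Fujita (see also \cite{BBJ18}). Starting from a normal test configuration $(\mcX,\mcL)$ of the $\bQ$-Fano variety $X$, one runs a fixed recipe: a base change $t\mapsto t^d$ and normalization to make $\mcX_0$ reduced; a log canonical modification rendering $(\mcX,\mcX_0^{\red})$ log canonical; a relative MMP for $K_{\mcX}+\mcX_0^{\red}$ with scaling by (a twist of) $\mcL$ over $\bA^1$, which contracts the offending components and, after possibly one further base change, extracts a fibration with klt $\bQ$-Fano central fibre; and finally the polarization $-K_{\mcX^{\mathrm s}/\bP^1}$, producing a special test configuration $(\mcX^{\mathrm s},-K_{\mcX^{\mathrm s}/\bP^1})$. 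The crucial input is that the \emph{Ding} functional is monotone along this recipe: normalization and each MMP step do not increase $\bfD^\NA$, and a degree-$d$ base change rescales it by the positive factor $d$. This is precisely why one works with $\bfD^\NA=-\bfE^\NA+\bfL^\NA$ rather than the Donaldson--Futaki/CM weight $\bfM^\NA$: here $\bfL^\NA$ is a log-canonical-threshold-type invariant and $\bfE^\NA$ a volume, and the birational operations only improve singularities and increase the positivity of the relevant divisors, so both terms move in the favorable direction. It follows that $\bfD^\NA\ge 0$ on all special test configurations forces $\bfD^\NA(\mcX,\mcL)\ge 0$, which is Ding-semistability; a strict version of the monotonicity --- strict decrease unless every step is an isomorphism in codimension one, in which case $(\mcX,\mcL)$ was already special --- upgrades this to Ding-stability. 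For reduced uniform stability I would run the same recipe while tracking the quantities in Definition~\ref{def-uniKst} in place of $\bfD^\NA$ alone, using that $\bfJ^\NA$ of the special model is comparable to that of the original once the base-change factor is accounted for and that the construction can be carried out $\tilde{\bT}$-equivariantly, hence compatibly with the twisting operation of Example~\ref{exmp-twist}. For polystability I would show that $\bfD^\NA(\mcX,\mcL)=0$ forces every MMP step to be crepant and volume preserving; a rigidity analysis then identifies $(\mcX,\mcL)$, up to a twist, with the product special degeneration, so $(\mcX,\mcL)$ is itself a product test configuration.

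The main obstacle is the monotonicity of $\bfD^\NA$ along the MMP, hand in hand with the existence and termination of the birational operations it requires --- log canonical modifications and relative MMP with scaling for log canonical and Fano-type pairs --- which lean on the full force of the minimal model program; by comparison, the bookkeeping of how $\bfE^\NA$, $\bfL^\NA$ and $\bfJ^\NA$ transform under base change, and the equivariance needed for the polystable and reduced uniform cases, are delicate but essentially routine once the monotonicity is secured.
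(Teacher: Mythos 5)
Your proposal matches the paper's described approach: the survey explicitly states that the proofs rely on the Li--Xu MMP process transforming an arbitrary test configuration into a special one, with the invariants $\bfM^\NA$ or $\bfD^\NA$ decreasing along the way, which is exactly the substantive step you isolate. Your reduction cycle --- using $\bfM^\NA\ge\bfD^\NA$ with equality on special test configurations to pivot everything onto a single implication for $\bfD^\NA$ --- is the standard (Berman/Fujita/Boucksom--Jonsson) packaging of these results, and the equivariant/twisted bookkeeping you flag for the polystable and reduced uniform cases is the content of the cited works of Fujita and Li.
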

The proofs of these results depend on a careful process of Minimal Model Program first used in \cite{LX14} to transform any given test configuration into a special one. 
Moreover crucial calculations show that the relevant invariants such as $\bfM^\NA$ or $\bfD^\NA$ decrease along the MMP process. 
Theorem \ref{thm-special} leads directly to a valuative criterion for K-stability. To state it we first define for any 
$v\in X^{\mathrm{div}}_\bQ$ an invariant (see Example \ref{exmp-Efil}):
\begin{equation}\label{eq-SLv}
S_L(v):=\frac{1}{\bV}\int_0^{+\infty} \vol(\cF^{(t)}_v)dt=\frac{1}{\bV}\int_\bR t (-d\vol(\cF^{(t)}_v))=\bfE^\NA(\phi_{\cF_v}). 
\end{equation}
Let $\tilde{\bT}$ be a maximal torus of $\Aut(X)$ and $(X^{\mathrm{div}}_\bQ)^{\tilde{\bT}}$ be the set of $\tilde{\bT}$-invariant divisorial valuations. 
Define the following invariant ($(\xi, v)\mapsto v_\xi$ is the action appeared in Example \ref{exmp-twist})
\begin{equation*}
\delta(X)=\inf_{v\in X^\textrm{div}_\bQ}\frac{A_X(v)}{S_X(v)}, \quad  \quad
\delta_{\tilde{\bT}}(X)=\inf_{v\in (X^{\textrm{div}}_\bQ)^{\tilde{\bT}}}\sup_{\xi\in \tilde{N}_\bR} \frac{A_X(v_\xi)}{S_X(v_\xi)}.
\end{equation*}
\begin{theorem}\label{thm-valcri}
\begin{enumerate}
\item (\cite{Li17, Fuj19}) $X$ is K-semistable if $\delta(X)\ge 1$. 
\item (\cite{Fuj19, Fuj19b}) $X$ is uniformly K-stable if and only if $\delta(X)>1$.
\item (\cite{BX19, Li17, Fuj19}) $X$ is K-stable if and only if $A_X(v)>S(v)$ for any non-trivial $v\in X^{\mathrm{div}}_\bQ$. 
\item (\cite{Li19}) $X$ is reduced uniformly K-stable if and only if $\delta_{\tilde{\bT}}(X)>1$. 
\end{enumerate}
\end{theorem}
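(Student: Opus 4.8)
The plan is to derive all four parts from the reduction of K-stability to \emph{special} test configurations (Theorem~\ref{thm-special}) together with a dictionary translating $\bfM^\NA=\bfD^\NA$ of a special test configuration into the valuative invariants $A_X$ and $S_L$. First I would record the identity
\begin{equation*}
\bfD^\NA(\phi_{\cF_v})=\bfM^\NA(\phi_{\cF_v})=A_X(v)-S_L(v)\qquad (v\in X^{\mathrm{div}}_\bQ),
\end{equation*}
for the filtration $\cF_v$ of \eqref{eq-filval}: applying \eqref{eq-HNA} to $\MA^\NA(\phi_{\cF_v})=\bV\cdot\delta_v$ gives $\bfH^\NA(\phi_{\cF_v})=A_X(v)$, \eqref{eq-SLv} gives $\bfE^\NA(\phi_{\cF_v})=S_L(v)$, and a short computation with $\bfL^\NA$ (using $\ud{S}=n$) finishes it. When $v$ is dreamy (i.e. $\cF_v$ finitely generated) $\phi_{\cF_v}\in\mcH^\NA$ comes from a genuine test configuration which, under the MMP process of Theorem~\ref{thm-special}, becomes a special test configuration without increasing $\bfM^\NA$; conversely a special test configuration $(\mcX,\mcL)$ produces a divisorial valuation $v_{\mcX}=r^{-1}\ord_{\mcX_0}|_{\bC(X)}$ with $\bfD^\NA(\mcX,\mcL)=A_X(v_{\mcX})-S_L(v_{\mcX})$, and $v_{\mcX}$ is trivial precisely when $(\mcX,\mcL)$ is a product. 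Non-dreamy $v$ are handled by approximation, using lower semicontinuity of $v\mapsto A_X(v)/S_L(v)$.

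\textbf{Parts (1)--(3).} With this dictionary, given a non-trivial test configuration I would replace it by a special one without increasing $\bfM^\NA$; by the dictionary that value equals $A_X(v)-S_L(v)$ for a divisorial $v$. Hence $\delta(X)\ge1$ forces $\bfM^\NA\ge0$ on all test configurations, which is K-semistability, proving (1); the converse is immediate by evaluation on the $\phi_{\cF_v}$. For (3), the hypothesis $A_X(v)>S_L(v)$ for all non-trivial $v$ yields $\bfM^\NA>0$ for every non-product special test configuration, and one must exclude the possibility that the MMP reduction sends a non-trivial test configuration to a product one — this is exactly the point addressed by \cite{BX19} (uniqueness of the K-semistable degeneration) and is the most delicate step of the theorem; the reverse implication of (3) is again immediate. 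Part (2) requires the \emph{uniform} comparison: evaluating on $\phi_{\cF_v}$ and using the dimension-only two-sided bound $\bfJ^\NA(\phi_{\cF_v})\asymp S_L(v)$, uniform K-stability gives $A_X(v)\ge(1+\gamma')S_L(v)$ for all $v$, i.e. $\delta(X)>1$; conversely $\delta(X)>1$ propagates through the MMP reduction (which also keeps $\bfJ^\NA$ under control) to $\bfM^\NA\ge\gamma''\,\bfJ^\NA$ on all test configurations. This is \cite{Fuj19, Fuj19b}.

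\textbf{Part (4).} For the torus-equivariant refinement I would use the reduced-uniform part of Theorem~\ref{thm-special}, by which reduced uniform K-stability is tested on special test configurations modulo twisting by $\xi\in\tilde{N}_\bQ$. Applying the twisting formula $\MA^\NA(\phi_{\cF_\xi})=(-\xi)_*\MA^\NA(\phi_{\cF})$ of Example~\ref{exmp-twist} to $\cF=\cF_v$ identifies the twisted filtration's potential with that of the twisted valuation $v_\xi$, so the dictionary upgrades to $\bfM^\NA(\phi_{(\cF_v)_\xi})=A_X(v_\xi)-S_L(v_\xi)$. The reduced-uniform inequality over $\tilde{\bT}$-invariant special test configurations then translates, after the $\bfJ^\NA\asymp S_L$ comparison, into the existence of $\gamma>0$ with $\sup_{\xi\in\tilde{N}_\bR}\big(A_X(v_\xi)-S_L(v_\xi)\big)\ge\gamma\cdot\inf_{\xi\in\tilde{N}_\bR}S_L(v_\xi)$ for every $\tilde{\bT}$-invariant $v$, i.e. $\delta_{\tilde{\bT}}(X)>1$; this is \cite{Li19}, and the maximal torus is used so that the $\tilde{N}_\bR$-orbit captures precisely the obstruction to uniformity coming from $\Aut(X)^0$. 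The genuinely hard inputs throughout are the MMP-based reduction of Theorem~\ref{thm-special} (granted here) and, in part (3), the \cite{BX19} control of when the MMP trivializes a test configuration; the valuative identity above and the estimates $\bfJ^\NA\asymp S_L$ are soft once $\MA^\NA(\phi_{\cF_v})=\bV\cdot\delta_v$ is available.
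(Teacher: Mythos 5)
Your overall strategy is the same as the paper's: establish the dictionary $\bfM^\NA = \bfD^\NA = A_X(v) - S_L(v)$ for the potentials coming from valuation filtrations, reduce to special test configurations via Theorem~\ref{thm-special}, and invoke Fujita's two-sided bound $\frac{1}{n}S(v)\le \bfJ^\NA(\cF_v)\le n S(v)$ for the uniform versions. The main difference is in how you derive the dictionary. You compute $\bfH^\NA$, $\bfE^\NA$ and $\bfL^\NA$ separately, leaning on $\MA^\NA(\phi_{\cF_v})=\bV\cdot\delta_v$ from \cite{BoJ18b} and the integral formula \eqref{eq-HNA}, whereas the paper's route (following \cite{Li17}) proceeds through the explicit filtration identity $\cF_{(\mcX,\mcL)}^\lambda R_m = \cF_v^{\lambda + mA(v)}R_m$ for a special test configuration and the volume formula \eqref{eq-ENAfil}. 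Your route is cleaner pluripotential-theoretically, but note that your step ``a special test configuration $(\mcX,\mcL)$ produces $v_{\mcX}$ with $\bfD^\NA(\mcX,\mcL)=A_X(v_{\mcX})-S_L(v_{\mcX})$'' is precisely what the filtration identity delivers (it shows $\phi_{(\mcX,\mcL)}$ and $\phi_{\cF_v}$ differ by the constant $A(v)$, and $\bfM^\NA$ is translation-invariant), so you cannot entirely avoid it; it is worth stating explicitly rather than bundling it into the heuristic dictionary.

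For part (4), your reliance on the pushforward formula $\MA^\NA(\phi_{\cF_\xi})=(-\xi)_*\MA^\NA(\phi_{\cF})$ is not quite enough on its own. The Monge--Amp\`ere measure determines a potential only up to a constant, and $\bfE^\NA$ is not translation-invariant, so ``identifying the twisted filtration's potential with that of $v_\xi$'' needs the additional algebraic fact $(\cF_v)_\xi = \cF_{v_\xi}$ (which does hold, and is a short computation from the definitions). More substantively, to pass from the reduced uniform inequality evaluated on $\phi_{\cF_v}$ (whose left side reads $A_X(v)-S_L(v)$) to the criterion $\sup_\xi A_X(v_\xi)/S_L(v_\xi) \ge 1+\gamma'$ defining $\delta_{\tilde\bT}(X)>1$, one needs to know how $A-S$ transforms under twisting. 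The paper isolates exactly this as the missing ingredient: the identity $A_X(v_\xi)-S(v_\xi)=A_X(v)-S(v)+\Fut_X(\xi)$ from \cite{Li19}. Without it, your inequality $\sup_\xi(A_X(v_\xi)-S_L(v_\xi))\ge\gamma\inf_\xi S_L(v_\xi)$ does not obviously match the $\delta_{\tilde\bT}$ condition, since the supremum on the left is a priori uncontrolled. So I would treat this identity as a genuinely required lemma rather than something absorbed into the dictionary; the rest of your argument is sound.
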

To get these, it was pointed out in \cite{BHJ17} that for a special test configuration $(\mcX, \mcL)$, the valuation $\ord_{\mcX_0}$ of the function field $\bC(X\times \bC)$ restricts to become a divisorial valuation $v\in X^{\mathrm{div}}_\bQ$. A crucial observation is then made in \cite{Li17}: 
$\cF_{(\mcX, \mcL)}^\lambda R_m=\cF_{v}^{\lambda+m A(v)}R_m$ (see \eqref{eq-filval}). 
This implies $\vol(\cF^{(t)}_{(\mcX, \mcL)})=\vol(\cF_v^{(t+A(v)})$,
which together with \eqref{eq-ENAfil} leads to: \footnote{The original argument in \cite{Li17} also explicitly relates the filtration $\cF_{(\mcX, \mcL)}$ to a filtration of the section ring of $\mcX_0$ induced by the $\bC^*$-action. 
}
\begin{align}
\bfM^\NA(\mcX, \mcL)&=\bfD^\NA(\mcX, \mcL)=A(v)-\bfE^\NA(\cF_v)=A(v)-S(v). \label{eq-beta}
\end{align}
This together with Theorem \ref{thm-special} gives the sufficient condition for the K-(semi)stability. The criterion for uniform K-stability follows from similar argument and K. Fujita's inequality:  $\frac{1}{n}S(v)\le \bfJ^\NA(\cF_v)\le n S(v)$ (\cite{Fuj19b}). 
For reduced uniform stability, another identity $A_X(v_\xi)-S(v_\xi)=A_X(v)-S(v)+\Fut_X(\xi)$ proved in \cite{Li19} is needed. 

As we will see in section \ref{sec-YTD}, a main reason for introducing the (reduced) uniform K-stability is that it is much easier to use in making connection with the (reduced) coercivity in the complex analytic setting. We now have the following fundamental result:
\begin{theorem}[\cite{LXZ21}]\label{thm-LXZ}
Let $X$ be a $\bQ$-Fano variety. 
$X$ is K-stable if and only if $X$ is uniformly K-stable.
More generally, $X$ is reduced uniformly stable if and only $X$ is K-polystable. Moreover these statements hold true for any log Fano pair. 
\end{theorem}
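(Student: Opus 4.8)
The plan is to reduce both equivalences, via the valuative criterion of Theorem~\ref{thm-valcri}, to a purely algebro-geometric \emph{finite generation} statement for valuations computing the stability threshold $\delta$, and then to apply it. One half of each equivalence is formal: ``uniformly K-stable $\Rightarrow$ K-stable'' because $\bfJ^\NA>0$ on any nontrivial test configuration, and ``reduced uniformly K-stable $\Rightarrow$ K-polystable'' because $\bfM^\NA(\mcX,\mcL)\ge\gamma\inf_\xi\bfJ^\NA(\mcX_\xi,\mcL_\xi)\ge 0$, with vanishing forcing some twist $(\mcX_\xi,\mcL_\xi)$ to be trivial, i.e. $(\mcX,\mcL)$ a product. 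So the content is the two implications: $X$ K-stable $\Rightarrow\delta(X)>1$, and $X$ K-polystable $\Rightarrow\delta_{\tilde\bT}(X)>1$, which by Theorem~\ref{thm-valcri}(2),(4) are exactly uniform and reduced uniform K-stability.

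The key input is the \emph{Higher Rank Finite Generation Theorem}: if $(X,\Delta)$ is a log Fano pair with $\delta(X,\Delta)\le 1$, then $\delta$ is computed by a quasi-monomial valuation $v_*$ whose associated graded ring $\mathrm{gr}_{v_*}R$, for $R=\bigoplus_m H^0(X,-m(K_X+\Delta))$, is finitely generated; in particular $v_*$ induces a degeneration of $(X,\Delta)$ to a K-semistable log Fano pair carrying a torus action. Granting this, the K-stable case is immediate: if $\delta(X)\le 1$ then $v_*$ is a nontrivial valuation with $A_X(v_*)=\delta(X)\,S_X(v_*)\le S_X(v_*)$, contradicting Theorem~\ref{thm-valcri}(3); hence $\delta(X)>1$.

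To prove the finite generation statement I would proceed in three steps. First, \emph{compactness}: using bases-type divisors / Newton--Okounkov bodies one realizes $\delta(X,\Delta)$ as an infimum over divisorial valuations, and uses Birkar's boundedness of $N$-complements to arrange that an optimizing sequence lies in a bounded family of divisors over a fixed birational model; passing to a limit produces a quasi-monomial $v_*$ computing $\delta$ and realized as an lc place of a $\bQ$-complement. Second, \emph{finite generation in rational rank one}: when $v_*$ is divisorial, extract it via a suitable $(K_X+\Delta+D)$-MMP, reaching a model on which $\mathrm{gr}_{v_*}R$ is the section ring of a Fano-type variety, hence finitely generated by the Cox ring finite generation of Birkar--Cascini--Hacon--McKernan. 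Third, \emph{higher rank}: for $v_*$ of larger rational rank, degenerate along the rank-one initial piece of $v_*$, lowering the rank, and conclude by induction with the previous step applied to the degeneration. Everything is uniform in the passage from Fano varieties to log Fano pairs, since complements, the MMP, and the valuative criterion all operate in the log category.

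For the K-polystable case one again reduces to $\delta(X)\le 1$; the case $\delta(X)<1$ cannot occur, since then $v_*$ would produce a special test configuration with $\bfM^\NA<0$, contradicting K-semistability. When $\delta(X)>1$, uniform K-stability already yields reduced uniform K-stability by taking the twist $\xi=0$. When $\delta(X)=1$, the minimizer $v_*$ induces a special test configuration $(\mcX,\mcL)$ with $\bfM^\NA(\mcX,\mcL)=\bfD^\NA(\mcX,\mcL)=A_X(v_*)-S_X(v_*)=0$ by \eqref{eq-beta}, so K-polystability forces it to be a product and $v_*$ to come from a one-parameter subgroup $\lambda\subset\tilde\bT$; applying the equivariant version of the finite generation theorem together with the identity $A_X(v_\xi)-S_X(v_\xi)=A_X(v)-S_X(v)+\Fut_X(\xi)$, one finds that after twisting there is no surviving destabilizer, i.e. $\delta_{\tilde\bT}(X)>1$, which by Theorem~\ref{thm-valcri}(4) is reduced uniform K-stability. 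The main obstacle is the finite generation theorem itself, and inside it the higher-rank step: the Rees algebra of an irrational quasi-monomial valuation computing $\delta$ is a priori an infinitely generated object, and controlling it through the degeneration-and-induction, fueled by the boundedness coming from Birkar's complements, is where essentially all the difficulty lies.
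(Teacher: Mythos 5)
Your proposal tracks the paper's (very brief) account quite faithfully: reduce, via the valuative criterion of Theorem~\ref{thm-valcri}, to the finite generation of the graded ring of a quasi-monomial valuation computing $\delta$, obtain that valuation as an lc place of a bounded complement \`a la Blum--Liu--Xu and Birkar, and then invoke the Liu--Xu--Zhuang finite generation machinery. The three-step outline of the finite generation argument (compactness via complements, rank-one case via an MMP extracting a plt blowup with finitely generated Cox ring, higher rank by degenerating along the rank-one initial piece and inducting) is the correct skeleton of \cite{LXZ21}.

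There is one imprecision worth flagging. In the K-stable case you assert a direct contradiction with Theorem~\ref{thm-valcri}(3) from $A_X(v_*)\le S_X(v_*)$, but that criterion is stated for \emph{divisorial} $v\in X^{\mathrm{div}}_\bQ$, whereas the compactness step only produces a quasi-monomial $v_*$ of possibly higher rational rank. One cannot pass from the divisorial inequality $A>S$ to the quasi-monomial one by simple approximation, since strict inequalities may degenerate to equalities in the limit; indeed, if $\delta(X)=1$ were computed only by an irrational quasi-monomial valuation, the divisorial criterion \emph{would} hold and $X$ would be K-stable but not uniformly K-stable. The whole content of the finite generation theorem is precisely to rule this out: finite generation of $\mathrm{gr}_{v_*}R$ produces a genuine (weighted) test configuration with vanishing $\bfM^\NA$, and after cutting down by a rational direction in the induced torus one obtains a \emph{divisorial} lc place with $A=S$, at which point your contradiction goes through. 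Your discussion of the K-polystable/reduced-uniform case is in the right spirit but glosses over the same point plus the twist bookkeeping; the identity $A_X(v_\xi)-S_X(v_\xi)=A_X(v)-S_X(v)+\Fut_X(\xi)$ you cite is exactly the tool for it. So: same route as the paper, with the caveat that ``immediate from valcri(3)'' hides the essential output of the finite generation theorem, namely that the extremal quasi-monomial valuation can be replaced by a divisorial one.
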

This is achieved by several works. First, according to a work of Blum-Liu-Xu (\cite{BLX19}), divisorial valuations on $X$ associated to special test configurations are log canonical places of complements. By deep boundedness of Birkar and Haccon-McKernan-Xu, it was also shown that there exists a quasi-monomial valuation (i.e. a monomial valuation on a smooth birational model) that achieves the infimum defining $\delta(X)$ (or more generally for $\delta_{\tilde{\bT}}(X)$). Then the main problem becomes proving a finite generation property for the minimizing valuation, which is achieved by using deep techniques from birational algebraic geometry in \cite{LXZ21}. In fact in the past several years, the algebraic study of K-stability for Fano varieties has flourished and there are many important results which answer fundamental questions in this subject. We highlight two such achievements:\\
\hskip 1mm
\textbf{(1)} Algebraic construction of projective moduli space of K-polystable Fano varieties.  This is achieved in a collection of works, settling different issues in the construction including boundedness, separatedness, properness and projectivity. Moreover concrete examples of compact moduli spaces have been identified. 
 We refer to \cite{Xu20b, LXZ21} for extensive discussions on related topics.  \\
\textbf{(2)}
Fujita-Odaka \cite{FO18} introduced quantizations of the $\delta(X)$ invariant: for each $m\in \bN$, 
\begin{equation*}\label{eq-deltam}
\delta_m(X)=\inf \big\{ \lct(X, D); D \text{ is of $m$-basis type} \big\}
\end{equation*}
where $D$ is of $m$-basis type if $D=\frac{1}{m N_m}\sum_{i=1}^{N_m} \{s_i=0\}$ where$\{s_i\}$ is a basis of $H^0(X, mL)$. 
Blum-Jonsson \cite{BlJ20} proved $\lim_{m\rightarrow+\infty}\delta_m(X)=\delta(X)$. This provides a practical tool to verify uniform stability of Fano varieties. Ahmadinezhad-Zhuang \cite{AZ20} further introduced new techniques for estimating the $\delta_m$ and $\delta$ invariant which lead to many new examples of K-stable Fano varieties. 
All of these culminate in the recent determination of deformation types of smooth Fano threefolds that contain K-polystable ones (see \cite{3foldgroup}). 

In another direction, Han-Li \cite{HL20b} establishes a valuative criterion for $g$-weighted stability, corresponding to the study of $g$-solitons. A key idea in such an extension is using a fibration technique for a polynomial weight (as motivated by the theory of equivariant de Rham cohomology) and then using the Stone-Weierstrass approximation to deal with the general $g$. 
Moreover there is a notion of stability for Fano cones introduced earlier by Collins-Sz\'{e}kelyhidi associated to Ricci-flat K\"{a}hler cone metrics.
It is shown recently that this stability of Fano cones is in fact equivalent to a particular $g$-weighted stability of log Fano quotients (see \cite{AJL21, Li21b}). 

The techniques developed in the study of (weighted) K-stability of Fano varieties have also been applied to treat an optimal degeneration problem that is motivated by the Hamilton-Tian conjecture in differential geometry (see \cite{Zhu21} for background of this conjecture). This is formulated as a minimization problem for valuations in \cite{HL20} which defines (cf. \eqref{eq-beta} and \eqref{eq-SLv}), for any valuation $v\in X^\NA$, 
\begin{equation*}
\tilde{\beta}(v)=A_X(v)+\log\Big(\frac{1}{\bV}\int_0^{+\infty} e^{-t} (-d \vol(\cF^{(t)}_v))\Big). 
\end{equation*}
Very roughly speaking, the $\tilde{\beta}$ functional is an anti-derivative of certain weighted Futaki invariant. 
This functional is a variant of invariants that appeared in previous works of Tian-Zhang-Zhang-Zhu, Dervan-Sz\'{e}kelyhidi and Hisamoto (see \cite{Zhu21} for more details). 
The results from \cite{HL20b, LWX20, BLXZ21} together prove the following algebraic version of Hamilton-Tian conjecture: 
\begin{theorem}\label{thm-AHT}
For any $\bQ$-Fano variety, there exists a unique quasi-monomial valuation $v_*$ that minimizes $\tilde{\beta}$, whose associated filtration $\cF_{v_*}$ is finitely generated and induces a degeneration of $X$ to a $\bQ$-Fano variety $\mcX_0$ together with a vector field $V_\xi$. Moreover $\mcX_0$ degenerates uniquely to an $e^{-\la \cdot, \xi\ra}$-weighted polystable $\bQ$-Fano variety (cf. Example \ref{exmp-KR}). 
\end{theorem}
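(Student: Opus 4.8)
The plan is to carry out, in the weighted setting, the three-stage strategy underlying Theorems \ref{thm-valcri} and \ref{thm-LXZ}, with the $\delta$-invariant replaced by the functional $\tilde{\beta}$. First I would show that $h(X):=\inf_{v\in X^\NA}\tilde{\beta}(v)$ is finite and that a minimizing sequence may be taken among divisorial valuations with $A_X$ bounded; here Jensen's inequality gives the lower bound $\tilde{\beta}(v)\ge A_X(v)-S_X(v)$, which together with the Fujita-type control of $S_X$ keeps $\tilde{\beta}$ bounded below. The decisive algebraic input, generalizing the Blum-Liu-Xu observation behind Theorem \ref{thm-special}, is that a minimizer must be a weighted log canonical place of a $\bQ$-complement; by Birkar's boundedness of complements and the Hacon-McKernan-Xu boundedness, such places lie on finitely many log smooth models, so one may extract a quasi-monomial limit $v_*$ of the minimizing sequence and verify that it actually attains $h(X)$. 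Uniqueness of $v_*$ I would deduce from strict convexity of $\tilde{\beta}$ along the segment of filtrations interpolating two candidate minimizers --- the valuative shadow of the convexity of the weighted Ding/Mabuchi energy along geodesics (cf. Theorem \ref{thm-convex}).

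The heart of the matter, and the step I expect to be the main obstacle, is showing that the minimizing filtration $\cF_{v_*}$ is finitely generated, i.e. that the graded ring $\bigoplus_{m,\lambda}\cF^\lambda_{v_*}R_m/\cF^{>\lambda}_{v_*}R_m$ is a finitely generated $\bC$-algebra. I would follow the approach of \cite{LXZ21}: degenerate $X$ step by step through a Minimal Model Program with scaling adapted to $v_*$, checking at each step that $\tilde{\beta}$ does not increase --- the analogue of the monotonicity of $\bfM^\NA$ and $\bfD^\NA$ along the MMP used in Theorem \ref{thm-special} --- so that the special degeneration produced at the end still computes $h(X)$; minimality then forces the relevant Rees algebra to be finitely generated. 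Finite generation yields, via the extended Rees construction, a degeneration $X\rightsquigarrow\mcX_0$ with $\mcX_0$ a $\bQ$-Fano variety; the grading endows $\mcX_0$ with a torus action, and the induced holomorphic vector field $V_\xi$ has $\xi$ in the relevant positive cone because $v_*$ is a ``positively weighted'' valuation. This is precisely where the deep birational geometry of \cite{LXZ21, BLXZ21} enters, rather than any purely variational argument.

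Finally I would treat the second degeneration. By construction $\mcX_0$ is $g$-weighted K-semistable for $g(y)=e^{-\la y,\xi\ra}$ (cf. Example \ref{exmp-KR}) but need not be $g$-weighted K-polystable. Applying on $\mcX_0$ the weighted valuative criterion and special-degeneration machinery of \cite{HL20b} --- the weighted counterparts of Theorems \ref{thm-special} and \ref{thm-LXZ} --- one obtains a further special degeneration of $\mcX_0$ to an $e^{-\la\cdot,\xi\ra}$-weighted K-polystable $\bQ$-Fano variety. Uniqueness of this last limit is the weighted analogue of the separatedness of the K-moduli space: any two such limits are isomorphic because the $g$-soliton (equivalently, the minimizer of the weighted K-energy) on a polystable fibre is unique modulo the action of its reductive automorphism group, by a weighted Matsushima-type argument. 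Assembling the three stages gives the theorem.
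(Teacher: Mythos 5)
The paper does not give a proof of this theorem; it is a survey statement cited as a synthesis of \cite{HL20b, LWX20, BLXZ21}. Your three-stage reconstruction (existence/uniqueness of a quasi-monomial minimizer, finite generation of the associated filtration, further degeneration to a weighted-polystable limit) does match the architecture of those works, so the overall route is correct. That said, several of your intermediate claims need to be sharpened to actually run.

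First, the Jensen lower bound $\tilde{\beta}(v)\ge A_X(v)-S_X(v)$ is correct as an inequality, but it does not establish that $\tilde{\beta}$ is bounded below: for a non-K-semistable $X$ the quantity $A_X(v)-S_X(v)$ is unbounded below along sequences of valuations, so this alone does not give a finite infimum $h(X)$ or a minimizing sequence with bounded log discrepancy. One needs the sharper volume estimates used in \cite{HL20b} (Fujita-type bounds on $\vol(\cF^{(t)}_v)$ in terms of $A_X(v)$ and the pseudoeffective threshold) to control the logarithm term and deduce compactness. Second, the claim that a $\tilde{\beta}$-minimizer ``must be a weighted log canonical place of a $\bQ$-complement'' transposes the Blum-Liu-Xu mechanism for $\delta$-minimizers too directly. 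The argument in \cite{BLXZ21} is structurally similar but not identical: the compactness/boundedness enters through the theory of special valuations and monotonicity of $\tilde\beta$ under weakly special degenerations rather than through a single complement construction, and the finite generation step is not an ``MMP with scaling adapted to $v_*$'' in the sense of \cite{LX14}; it is a subtler argument about $\bT$-invariant boundedness of the degenerate Fano family carrying the minimizer. Third, for uniqueness of $v_*$ you correctly reach for convexity of $\tilde{\beta}$ along geodesics of filtrations, but the convexity is not strict in general --- the content of \cite{HL20b} is precisely an analysis of the affine (equality) locus, combined with the twist formula (the weighted analogue of $A_X(v_\xi)-S(v_\xi)=A_X(v)-S(v)+\Fut_X(\xi)$) to identify all minimizers up to a torus twist. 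Finally, the uniqueness of the last degeneration to a weighted K-polystable fiber is not a ``weighted Matsushima-type argument'' (Matsushima only gives reductivity of the automorphism group); it is the weighted analogue of the separatedness/uniqueness of K-polystable degenerations in the style of \cite{BX19, LWX20}, which requires an interpolation/valuative argument on the product family, not just metric uniqueness on the polystable fiber. With these four repairs your outline coincides with what the cited works actually prove.
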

Combined with previous works, the uniqueness part in particular confirm a conjecture of Chen-Sun-Wang about the algebraic uniqueness of limits under normalized K\"{a}hler-Ricci flows on Fano manifolds (see \cite{Sun18}). 

\subsection{Normalized volume and local stability theory of klt singularities}

A similar minimization problem for valuations was actually studied earlier in the local setting, which motivates the formulation and the proof of Theorem \ref{thm-AHT}. 
Let $(X, x)$ be a klt singularity. Denote by $\Val_{X,x}$ the space of real valuations that have center $x$. The \textit{normalized volume} functional is introduced in \cite{Li18}: for any $v\in \Val_{X, x}$, 
\begin{equation}\label{eq-hvol}
\hvol(v):=\bigg\{
\begin{array}{ll} \;
A_X(v)^n\cdot \vol(v), & \text{\; if } A_X(v)<+\infty \\
\; +\infty, & \text{\; otherwise. }
\end{array}
\end{equation}
Here $A_X(v)$ is again the log discrepancy functional and $\vol(v)$ is defined as:
\begin{equation*}
\vol(v)=\lim_{p\rightarrow+\infty}\frac{\dim_\bC (\mcO_{X,x}/\fa_p(v))}{p^n/n!}\quad \text{where} \quad \fa_p(v)=\{f\in \mcO_{X,x}; v(f)\ge p\}.
\end{equation*}
The expression in \eqref{eq-hvol} is inspired by the work of Martelli-Sparks-Yau \cite{MSY08} on a volume minimization property of Reeb vector fields associated to Ricci-flat K\"{a}hler cone metrics. In \cite{Li18} we started to consider the minimization of $\hvol$ over $\Val_{X,x}$ and define the invariant $\hvol(X, x)=\inf_{v\in \Val_{X,x}}\hvol(v)$. We proved that the invariant $\hvol(X, x)$ is strictly positive and further conjectured the existence, uniqueness of minimizing valuations which should have finite generated associated graded rings.
For a concrete example, it was shown by the author and Y. Liu that for an isolated quotient singularity $X=\bC^n/\Gamma$, $\hvol(\bC^n/\Gamma, 0)=\frac{n^n}{|\Gamma|}$ and the exceptional divisor of the standard blowup obtains the infimum. 

This minimization problem was proposed to attack a conjecture of Donaldson-Sun, which states that the metric tangent cone at any point on a Gromov-Hausdorff limit of K\"{a}hler-Einstein manifolds depends only on the algebraic structure (see \cite{Sun18}). This conjecture has been confirmed in a series of following-up papers \cite{LX20, LX18, LWX20}. 
Algebraically we have the following results regarding this minimization problem.
\begin{theorem}
$\mathrm{(1)}$
There exists a valuation that achieves the infimum in defining $\hvol(X, x)$. Moreover this minimizing valuation is quasi-monomial and unique up to rescaling. \\
$\mathrm{(2)}$
A divisorial valuation $v_*$ is the minimizer if and only if it is the exceptional divisor of a plt blowup and also the associated log Fano pair is K-semistable.
\end{theorem}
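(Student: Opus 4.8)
The plan is to reduce the minimization of $\hvol$ over the infinite-dimensional space $\Val_{X,x}$ to boundedness statements about a family of log Fano pairs, following the circle of ideas around \cite{Li18, LX20, LX18, LWX20, LXZ21} and using the local counterparts of the finite-generation and uniqueness inputs that underlie Theorems \ref{thm-AHT} and \ref{thm-LXZ}. First I would record that $\hvol$ is invariant under rescaling $v\mapsto\lambda v$ ($\lambda>0$), since $A_X(\lambda v)=\lambda A_X(v)$ and $\vol(\lambda v)=\lambda^{-n}\vol(v)$; so any minimizing sequence may be normalized, say by $A_X(v_k)=1$, and the strict positivity $\hvol(X,x)>0$ of \cite{Li18} (an Izumi-type estimate) forces $\vol(v_k)$ to be bounded above and away from $0$. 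Next, using the MMP-based degeneration technique of Li--Xu \cite{LX20}, one shows that the infimum is unchanged when taken only over \emph{Koll\'ar components} --- exceptional divisors $S$ of plt blowups $\pi\colon Y\to X$ with $-S$ being $\pi$-ample and $(Y,S)$ plt --- and that a minimizing sequence may be replaced by rescalings of Koll\'ar components $S_k$. Adjunction presents each $S_k$ as a log Fano pair $(S_k,\Delta_{S_k})$, and there is an explicit identity expressing $\hvol(\ord_{S_k})$ through intersection numbers on $Y_k$, equivalently through a normalized-volume invariant of $(S_k,\Delta_{S_k})$.

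Existence then follows from boundedness: the uniform bound on $\hvol(\ord_{S_k})$ makes the $(S_k,\Delta_{S_k})$ into $\epsilon$-log canonical Fano pairs of bounded volume for a fixed $\epsilon>0$, hence a bounded family by the boundedness results of Birkar and Hacon--McKernan--Xu; passing to a limit of this family and using lower semicontinuity of $\hvol$ produces a valuation $v_*$ with $\hvol(v_*)=\hvol(X,x)$, a priori merely a real valuation in $\Val_{X,x}$. To upgrade this I would invoke the local analogues of the results behind Theorem \ref{thm-LXZ}: that $v_*$ is quasi-monomial and that its associated graded ring $\mathrm{gr}_{v_*}\mcO_{X,x}$ is finitely generated. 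Granting this, $v_*$ degenerates $(X,x)$ to a klt singularity $X_0=\mathbf{Spec}(\mathrm{gr}_{v_*}\mcO_{X,x})$ equipped with a torus action whose Reeb vector is $v_*$; this exhibits $v_*$ as monomial on $X_0$, and reinterprets the minimality of $\hvol$ as K-semistability of the Fano cone $(X_0,v_*)$. Uniqueness up to rescaling is then the local mirror of the uniqueness assertion in Theorem \ref{thm-AHT}: a minimizer is automatically the Reeb vector of a K-\emph{poly}stable Fano cone, and the K-polystable degeneration of a fixed K-semistable Fano cone is unique up to isomorphism, which by a separatedness argument in the spirit of \cite{LWX20} forces two minimizers to agree after scaling.

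Statement $(2)$ is extracted from the same machinery. If a minimizer $v_*$ is divisorial, the reduction step forces it to be a rescaling of a Koll\'ar component --- i.e. the divisor of a plt blowup --- and the degeneration step shows that the associated log Fano pair is K-semistable. Conversely, if $S$ is a Koll\'ar component with $(S,\Delta_S)$ K-semistable, the adjunction identity together with the comparison theorem of \cite{LX20} --- which identifies K-semistability of $(S,\Delta_S)$ with $\ord_S$ realizing $\inf_{v}\hvol(v)$ --- gives $\hvol(\ord_S)=\hvol(X,x)$, so $\ord_S$ is a minimizer.

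The hard part will be the two deep inputs in the middle: the quasi-monomiality and finite generation of $\mathrm{gr}_{v_*}\mcO_{X,x}$, and the uniqueness of the K-polystable Fano-cone degeneration. These are the MMP- and boundedness-heavy statements --- the local analogues of \cite{LXZ21} and of the uniqueness in Theorem \ref{thm-AHT} --- and without them one only obtains existence of a possibly non-unique minimizer. A secondary subtlety is verifying that the equality case in the Koll\'ar-component comparison is governed exactly by K-semistability of $(S,\Delta_S)$ rather than by a weaker condition, which is where the positivity estimate of \cite{Li18} and the intersection-theoretic identity for $\hvol(\ord_S)$ must be combined carefully.
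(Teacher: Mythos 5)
The paper itself gives no proof here; it is a survey and Theorem~3.11 is attributed to Blum (existence), Xu (quasi-monomiality), Xu--Zhuang (uniqueness) and Li--Xu (statement (2)). Your sketch is a plausible-looking synthesis but it diverges from those proofs at two points where the divergence is not merely cosmetic.

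First, your existence argument has a real gap. After reducing to a minimizing sequence of Koll\'ar components $S_k$ with $\hvol(\ord_{S_k})$ bounded, you claim the pairs $(S_k,\Delta_{S_k})$ form a bounded family because they are ``$\epsilon$-log canonical Fano pairs of bounded volume.'' A bound on $\hvol(\ord_{S_k})$ does yield a bound on $(-(K_{S_k}+\Delta_{S_k}))^{n-1}$ via the adjunction identity, but it does \emph{not} by itself produce a uniform $\epsilon$ for $\epsilon$-log canonicity of the adjoint pairs; that uniform lower bound on log discrepancies is exactly the sort of thing one normally extracts from (approximate) K-semistability, which the $S_k$ need not satisfy along an arbitrary minimizing sequence. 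Blum's actual existence proof sidesteps boundedness entirely: he forms a generic limit of the graded sequences of valuation ideals $\fa_\bullet(v_k)$ on a fixed $(X,x)$ and shows by Izumi-type estimates and semicontinuity of $\lct$ and multiplicity that the limit realizes the infimum. If you want to go the boundedness route you need an additional input to control the singularities of $(S_k,\Delta_{S_k})$, and you should say what that input is.

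Second, and more seriously, your uniqueness argument is circular at the time the theorem was established. You invoke finite generation of $\mathrm{gr}_{v_*}\mcO_{X,x}$ to degenerate $(X,x)$ to a Fano cone $(X_0,v_*)$ and then appeal to uniqueness of the K-polystable degeneration. But the finite generation of the associated graded ring for a (possibly irrational) quasi-monomial minimizer was precisely the open Stable Degeneration Conjecture from \cite{Li18}; the paper even flags in a footnote that it was only resolved in a 2022 preprint of Xu--Zhuang, which postdates \cite{XZ20}. The proof of uniqueness in \cite{XZ20} deliberately avoids finite generation: it compares two normalized minimizers via a convexity argument for the normalized multiplicity (or volume) along a ``geodesic'' of filtrations constructed from the two graded ideal sequences, using Okounkov-body techniques, and shows strict convexity forces the two valuations to agree up to scaling. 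Likewise, \cite{Xu20} proves quasi-monomiality without finite generation, by showing the minimizer must be an lc place of a bounded complement; you should not bundle the two conclusions as if they come from the same argument. Your discussion of statement (2) is essentially correct and does match \cite{LX20}, but the flagged two steps in (1) need to be replaced by the arguments just indicated (or, if you prefer your route, you must explicitly assume the Xu--Zhuang finite generation theorem as an input, which gives a genuinely different and more recent proof than the one the survey is attributing).
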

The first statement is a combination of works by Harold Blum, Chenyang Xu and Ziquan Zhuang (\cite{Blu18, Xu20, XZ20}). 
The second statement was proved in Li-Xu \cite{LX20} (see also \cite{Blu18}) by extending the global argument from \cite{LX14} to the local case, and it shows a close relationship between the local and global theory. 
In fact, it is in proving the affine cone case of this statement when valuative criterion for K-(semi)stability was first discovered in \cite{Li17}. A similar statement is true for more general quasi-monomial minimizing valuations (\cite{LX18}). However the finite generation conjecture from \cite{Li18} is still open in general, and seems to require deeper boundedness property of Fano varieties. \footnote{Recently this conjecture has been confirmed in a preprint of Xu-Zhuang: Stable degenerations of singularities, arXiv:2205.10915.}

We also like to mention that Yuchen Liu obtained a surprising local-to-global comparison inequality by generalizing an estimate of K. Fujita:
\begin{theorem}[\cite{Liu18}]
For any closed point $x$ on a K-semistable $\bQ$-Fano variety $X$, we have:
\begin{equation}\label{eq-Liuineq}
(-K_X)^{\cdot n}\le \frac{(n+1)^{n}}{n^{n}}\hvol(X,x).
\end{equation} 
\end{theorem}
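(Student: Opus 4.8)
The plan is to establish, for every valuation $v\in\Val_{X,x}$ with $A_X(v)<+\infty$, the pointwise estimate
\[
\hvol(v)=A_X(v)^n\,\vol(v)\;\ge\;\frac{n^n}{(n+1)^n}\,(-K_X)^{\cdot n}
\]
(cf.\ \eqref{eq-hvol}), and then take the infimum over $v$. Write $\bV=(-K_X)^{\cdot n}$ and $R_m=H^0(X,-mK_X)$ for $m$ sufficiently divisible, and attach to $v$ the filtration $\cF_v$ as in \eqref{eq-filval}. Since $v$ is centered at $x$, the value $v(s)$ of a section $s\in R_m$ depends only on its germ at $x$ (local frames of $-mK_X$ near $x$ are units in $\mcO_{X,x}$), so $\cF_v^{p}R_m=H^0(X,-mK_X\otimes\mcI_p)$ where $\mcI_p$ is the ideal sheaf cosupported at $x$ with stalk $\fa_p(v)$; moreover $A_X(v)<+\infty$ forces each $\fa_p(v)$ to be $\fm_x$-primary, hence of finite colength.

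The core step I would carry out is a local-to-global volume comparison. Applying $H^0(X,-)$ to the exact sequence $0\to -mK_X\otimes\mcI_{mt}\to -mK_X\to(-mK_X)\otimes(\mcO_X/\mcI_{mt})\to 0$ gives $\dim_\bC R_m-\dim_\bC\cF_v^{mt}R_m\le\ell(\mcO_{X,x}/\fa_{mt}(v))$; dividing by $m^n/n!$ and letting $m\to+\infty$, the left side tends to $\bV-\vol(\cF_v^{(t)})$ and the right side to $t^n\vol(v)$ by the definition of $\vol(v)$, so $\vol(\cF_v^{(t)})\ge\bV-t^n\vol(v)$. Combining this with $\vol(\cF_v^{(t)})\ge 0$ and integrating as in \eqref{eq-SLv}, with $t_0:=(\bV/\vol(v))^{1/n}$,
\[
S_X(v)=\frac{1}{\bV}\int_0^{+\infty}\vol(\cF_v^{(t)})\,dt\;\ge\;\frac{1}{\bV}\int_0^{t_0}\bigl(\bV-t^n\vol(v)\bigr)\,dt=\frac{n}{n+1}\,t_0=\frac{n}{n+1}\Bigl(\frac{\bV}{\vol(v)}\Bigr)^{1/n}.
\]

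To finish, I would invoke that K-semistability of $X$ gives $A_X(v)\ge S_X(v)$ for every divisorial (and hence, by approximation, every) valuation $v$ — this is the valuative criterion in Theorem \ref{thm-valcri}, coming from \eqref{eq-beta} together with the reduction to special test configurations in Theorem \ref{thm-special}. Feeding in the lower bound for $S_X(v)$ and raising to the $n$-th power yields $A_X(v)^n\vol(v)\ge\bigl(\tfrac{n}{n+1}\bigr)^n\bV$, which is the claim after taking $\inf_v$. The step I expect to be the main obstacle is the comparison $\vol(\cF_v^{(t)})\ge\bV-t^n\vol(v)$: it rests on the $\fm_x$-primariness of the $\fa_p(v)$ and the asymptotics $\ell(\mcO_{X,x}/\fa_{mt}(v))\sim\frac{\vol(v)}{n!}(mt)^n$ — both consequences of $A_X(v)<+\infty$ and the basic theory of the normalized volume — together with the existence of the limit defining $\vol(\cF_v^{(t)})$ for the big class $-K_X$; once these are in place the remainder is one elementary integral and the valuative criterion. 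As a consistency check, equality holds for $X=\bP^n$ at a smooth point, where $\hvol=n^n$ and $(-K_{\bP^n})^{\cdot n}=(n+1)^n$.
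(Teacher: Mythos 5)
Your proof is correct and is essentially the argument of \cite{Liu18} (which the survey cites and does not reprove), itself generalizing Fujita's approach in \cite{Fuj18} from the smooth-point case: the local-to-global volume comparison $\vol(\cF_v^{(t)})\ge\bV-t^n\vol(v)$ via the short exact sequence, the integration giving $S_X(v)\ge\frac{n}{n+1}(\bV/\vol(v))^{1/n}$, and the valuative criterion $A_X(v)\ge S_X(v)$ from K-semistability are precisely Liu's steps. One small streamlining worth noting: since $\hvol(X,x)$ can be computed as an infimum over divisorial valuations (as established in the normalized-volume theory), you do not actually need the parenthetical extension of $A_X(v)\ge S_X(v)$ to non-divisorial valuations.
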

For example, if $x\in X$ is a regular point, \eqref{eq-Liuineq} recovers Fujita's beautiful inequality: $(-K_X)^{\cdot n}\le (n+1)^n$ for any K-semistable $X$ (\cite{Fuj18}). 
The inequality \eqref{eq-Liuineq} has applications in controlling singularities on the varieties that correspond to boundary points of moduli spaces. In order for this to be effective, good estimates of $\hvol(X, x)$ for klt singularities need to be developed. In particular, it is still interesting to understand better the $\hvol$ invariants and associated minimizers for 3-dimensional klt singularities. For more discussion on related topics, we refer to the survey \cite{LLX20}. 

\section{Archimedean (complex analytic) theory vs. non-Archimedean theory}

\subsection{Correspondence between Archimedean and non-Archimedean objects}
In this section, we explain results to show a general philosophy that non-Archimedean objects usually encode the information of corresponding Archimedean objects `at infinity'. 

Let $(\mcX, \mcL)$ be a test configuration and $\tilde{h}$ be a smooth psh metric on $\mcL$. Via the isomorphism $(\mcX, \mcL)\times_{\bC}\bC^* \cong X\times\bC^*$, we get a path $\tilde{\Phi}=\{\tilde{\vphi}(s)\}_{s\in \bR}$ of smooth $\omega_0$-psh potentials where $s=-\log|t|^2$. With these notation, we have the following important result:
\begin{theorem}[\cite{Tia97, Tia17, PRS08, BHJ19}]\label{thm-AvsNA}
The slope at infinity of a functional $\bfF\in \{\mathbf{E}, \Lam, \bfI, \bfJ, \bfM\}$ is given by the corresponding non-Archimedean functional:
\begin{equation*}\label{eq-slopeNA}
\bfF'^\infty(\tilde{\Phi}):=\lim_{s\rightarrow+\infty}\frac{\bfF(\tilde{\vphi}(s))}{s}=\bfF^\NA(\mcX, \mcL)=\bfF^\NA(\phi_{(\mcX, \mcL)}). 
\end{equation*}
\end{theorem}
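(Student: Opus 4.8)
The plan is to verify the identity separately for the "energy-type" functionals $\bfF \in \{\bfE, \Lam, \bfI, \bfJ\}$ and then deduce the case $\bfF = \bfM$ by combining them with the entropy term. For the energy-type functionals the key point is that all of them are built from mixed Monge--Ampère energies, so it suffices to understand the asymptotic slope of a single "mixed energy" quantity of the form $s \mapsto \int_X \tilde\vphi(s)\, \omega_{\tilde\vphi(s)}^k \wedge \omega_0^{n-k}$ (and its variants with a fixed form $\chi$ inserted). First I would reduce to the case where $\mcX$ dominates $X \times \bP^1$ via $\rho$, using the fact (stated in Definition~\ref{def-TC}) that one can always pass to an equivalent test configuration with this property, and noting that both sides of the claimed identity are invariant under such modifications (the left side because $\tilde\Phi$ changes only by pulling back, the right side because the non-Archimedean functionals are defined on the equivalence class $\phi_{(\mcX,\mcL)}$). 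Then, with $\bar{\mcX}$ smooth and dominating $X\times\bP^1$, choose a smooth psh metric $\tilde h$ on $\ocL$ over $\ocX$ and let $\tilde\Phi = \{\tilde\vphi(s)\}$ be the induced path; the quantity $\int_X \tilde\vphi(s)\,\omega_{\tilde\vphi(s)}^k\wedge\omega_0^{n-k}$ is, up to bounded error as $s\to+\infty$, a fibrewise intersection number on $\ocX$, and its leading-order behaviour in $s$ is governed by the intersection numbers $\ocL^{\cdot k+1}\cdot(\rho^*L_{\bP^1})^{\cdot n-k}$ appearing in \eqref{eq-ENA}--\eqref{eq-JNA}. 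This is the standard computation going back to Tian, Paul--Rauan--Seyyedali and Boucksom--Hisamoto--Jonsson: expand the Bedford--Taylor energy along the degeneration, integrate by parts in the $\bP^1$-direction, and identify the $s$-linear term with the top self-intersection of $\ocL$.

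Concretely, for $\bfE$ I would use that $(n+1)\bV\,\bfE(\tilde\vphi(s)) = \sum_{k=0}^n \int_X \tilde\vphi(s)\,\omega_{\tilde\vphi(s)}^k\wedge\omega_0^{n-k}$ and show that this grows like $s\cdot\ocL^{\cdot n+1}$; the cleanest route is to differentiate in $s$, recognise $\frac{d}{ds}\bfE(\tilde\vphi(s))$ as a Monge--Ampère integral of $\dot{\tilde\vphi}(s)$ (which extends continuously across $s=+\infty$ to a quantity computed on the central fibre), and integrate. For $\Lam$ the analysis is even simpler because only $\omega_0^n$ appears, so the slope is $\frac{1}{\bV}\ocL^{\cdot n}\cdot\rho^*L_{\bP^1}$ essentially by a Fubini argument over $\bP^1$. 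Then $\bfJ = \Lam - \bfE$ and $\bfI$ (the mixed energy $\bfI(\vphi) = \Lam(\vphi) - \frac{1}{\bV}\int_X\vphi\,\omega_\vphi^n$, restored from context) follow by linearity, matching $\bfJ^\NA$ in \eqref{eq-JNA}. The term $(\bfE^{-Ric(\omega_0)})$ is handled identically with the fixed form $\chi = -Ric(\omega_0)$ inserted, yielding the slope $\frac{1}{\bV}K_X\cdot\ocL^{\cdot n}$ after identifying $-Ric(\omega_0)$ with a smooth representative of $c_1(K_X)$ pulled back to $\ocX$.

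For $\bfM$ the remaining ingredient is the slope of the entropy $\bfH(\tilde\vphi(s))$. By the definition \eqref{eq-Mvphi} we have $\bfM = \bfH + \bfE^{-Ric(\omega_0)} + \ud S\cdot\bfE$, and the non-Archimedean side \eqref{eq-MabNA} is $\bfM^\NA = \bfH^\NA + (\bfE^{K_X})^\NA + \ud S\cdot\bfE^\NA$, so it suffices to prove $\lim_{s\to+\infty}\bfH(\tilde\vphi(s))/s = \bfH^\NA(\mcX,\mcL)$. Here I would write $\bfH(\tilde\vphi(s))$ as a relative energy $\int\log(\omega_{\tilde\vphi(s)}^n/\Omega_0)\,\omega_{\tilde\vphi(s)}^n$ and compare it with the log-relative-canonical class $K^{\log}_{\ocX/X_{\bP^1}}$: the point is that $\Omega_0$ extends to a (singular) metric on $K_{\ocX/X_{\bP^1}}$ whose curvature contributes the log-discrepancy data, and the $s$-linear term of the entropy is exactly the intersection number $K^{\log}_{\ocX/X_{\bP^1}}\cdot\mcL^{\cdot n}$, equivalently $\int_{X^\NA}A_X(v)\,\MA^\NA(\phi_{(\mcX,\mcL)})$ by \eqref{eq-HNA}. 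This last step — the entropy slope — is the main obstacle: unlike the polynomial energy functionals it is not a naive intersection number, and controlling the growth of $\log(\omega_{\tilde\vphi(s)}^n/\Omega_0)$ near the central fibre requires either a careful local analysis of the degenerating volume forms (as in Tian's and Boucksom--Hisamoto--Jonsson's treatment) or an appeal to the log-discrepancy interpretation via a log resolution of $(\ocX,\mcX_0^{\mathrm{red}})$. I would expect to spend the bulk of the argument establishing two-sided bounds $\bfH(\tilde\vphi(s)) = s\cdot\bfH^\NA + O(1)$ (or at least $o(s)$ error) by dominating the density $\omega_{\tilde\vphi(s)}^n/\Omega_0$ above and below by explicit model volume forms on toroidal neighbourhoods of $\mcX_0$, where the computation reduces to a finite-dimensional one over the dual complex.
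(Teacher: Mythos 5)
This is a survey, so the paper itself gives no proof of Theorem~\ref{thm-AvsNA}, only the citations \cite{Tia97, Tia17, PRS08, BHJ19}; measured against those, your plan follows essentially the standard route (most closely that of \cite{BHJ19}): pass to an SNC model dominating $X\times\bP^1$, read off the slopes of the energy-type functionals $\bfE,\Lam,\bfI,\bfJ$ as intersection numbers of $\ocL$ with $\rho^*L_{\bP^1}$, and isolate the entropy slope as the genuine technical core, controlled via local model volume forms near $\mcX_0$ and the log-discrepancy interpretation embodied in \eqref{eq-HNA}. Your identification of $\bfH$ as the hard step and your sketch of why its slope matches $\frac{1}{\bV}K^{\log}_{\ocX/X_{\bP^1}}\cdot\mcL^{\cdot n}$ are consistent with the argument in the references.
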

There is a more canonical analytic object associated to a test configuration. Recall that by a geodesic ray $\Phi=\{\vphi(s)\}_{s\in [0, +\infty)}$ we mean that $\Phi|_{[s_1, s_2]}$ is a geodesic connecting $\vphi(s_1), \vphi(s_2)$ for any $s_1, s_2\in [0, \infty)$ (see \eqref{eq-geodE1}). 
\begin{theorem}[\cite{PS07}]\label{thm-PS}
For any test configuration $(\mcX, \mcL)$ for $(X, L)$, there exists a geodesic ray $\Phi_{(\mcX, \mcL)}$ emanating from any given smooth potential $\vphi_0$. 
\end{theorem}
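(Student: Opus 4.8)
The plan is to construct the geodesic ray $\Phi_{(\mcX,\mcL)}$ as a decreasing limit of geodesic segments, using the test configuration to produce boundary data at "time $s$" and then letting $s\to+\infty$. First I would fix a smooth psh metric $\tilde h$ on $\mcL$ over the total space $\mcX$ (possible up to replacing $\mcL$ by a large multiple, since $\mcL$ is $\pi$-semiample and $\mcX$ is normal projective over $\bP^1$); via the trivialization $(\mcX,\mcL)\times_\bC\bC^*\cong (X\times\bC^*,p_1^*L)$ this produces the ray $\{\tilde\vphi(s)\}_{s\ge 0}$ of $\omega_0$-psh potentials with $s=-\log|t|^2$, as in Theorem \ref{thm-AvsNA}. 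The key analytic input is a uniform Lipschitz/almost-psh estimate: one shows that for each $s$, $\tilde\vphi(s)$ has bounded oscillation relative to a linear function of $s$, which follows from the fact that $\tilde h$ extends across $\mcX_0$ and the $\bC^*$-action contracts everything to the central fibre. This gives $\tilde\vphi(s)\le C s + C'$ uniformly.

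Next I would, for each $R>0$, let $\Phi_R=\{\vphi_R(s)\}_{s\in[0,R]}$ be the geodesic segment (in the sense of \eqref{eq-geodE1}) connecting the fixed potential $\vphi_0$ at $s=0$ to $\tilde\vphi(R)$ at $s=R$. The essential point is a \emph{monotonicity} property: because the $\bC^*$-equivariant metric $\tilde h$ is itself (sub)geodesic — i.e. $\{\tilde\vphi(s)\}$ is a subgeodesic ray as a consequence of the psh property of $\tilde h$ on the total space — the comparison principle for the homogeneous complex Monge-Ampère equation \eqref{eq-geod} forces $\vphi_{R}(s)\le \vphi_{R'}(s)$ whenever $R\le R'$, on the common interval. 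I would also need a uniform \emph{lower} bound: $\vphi_R(s)\ge \tilde\vphi(s)$ (or $\ge$ some fixed subgeodesic from $\vphi_0$), again by the maximum principle, since $\tilde\vphi$ is a candidate competitor in the sup defining the segment geodesic. Thus $\{\vphi_R(s)\}_R$ is monotone increasing in $R$ and uniformly bounded above on compacts in $s$ by the linear estimate above. Taking $R\to+\infty$ and an upper semicontinuous regularization, $\Phi_{(\mcX,\mcL)}:=(\lim_{R\to\infty}\Phi_R)^*$ is a well-defined $p_1^*\omega_0$-psh function on $X\times[0,+\infty)\times S^1$.

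The final step is to verify that $\Phi_{(\mcX,\mcL)}$ is genuinely a geodesic ray, i.e. its restriction to each $[s_1,s_2]$ solves \eqref{eq-geod} with its own endpoint data, equivalently is maximal among subgeodesics with those endpoints. This is where I expect the main obstacle: one must show the monotone limit of maximal subgeodesics is again maximal. I would argue that any competitor $\Psi$ on $[s_1,s_2]$ with $\lim_{s\to s_i}\Psi(\cdot,s)\le\Phi_{(\mcX,\mcL)}(\cdot,s_i)$ can be compared, for each $R>s_2$, against $\vphi_R$ after matching boundary values (using that $\Phi_{(\mcX,\mcL)}(\cdot,s_i)=\lim_R\vphi_R(s_i)$ and a small perturbation of endpoints), so $\Psi\le\vphi_R$ and hence $\Psi\le\Phi_{(\mcX,\mcL)}$ in the limit; the technical care is in handling the endpoint convergence and the fact that the segments $\Phi_R$ are not literally geodesics on $[s_1,s_2]$ but only on $[0,R]$, which requires the restriction-of-geodesics property and domain-monotonicity of the geodesic construction. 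One also checks $\Phi_{(\mcX,\mcL)}(\cdot,0)=\vphi_0$, which holds since $\vphi_0\le\vphi_R(0)=\vphi_0$ on the nose and the limit is taken with fixed left endpoint. Finally, to confirm the ray emanates from \emph{any} given $\vphi_0\in\mcH$ rather than a special one, the same construction works verbatim with that choice of left boundary data, and finiteness of the energy along the ray (so that it lives in $\cE^1$) follows from Theorem \ref{thm-convex}(1): $s\mapsto\bfE(\vphi_R(s))$ is affine with slope bounded by $\bfE^\NA(\mcX,\mcL)$ via Theorem \ref{thm-AvsNA}, giving the linear growth control that persists in the limit.
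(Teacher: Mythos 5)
Your overall strategy — fix a smooth psh metric on $\mcL$ over the total space (using semiampleness over the affine base), connect $\vphi_0$ to $\tilde\vphi(R)$ by a geodesic segment, use monotonicity in $R$, and pass to the increasing limit — is precisely the construction of Phong--Sturm, so the skeleton is correct. A minor point first: for the subgeodesic $\tilde\Phi$ to be an admissible competitor in the envelope defining $\Phi_R$ (which is what gives both the lower bound $\Phi_R\ge\tilde\Phi$ and the inequality $\vphi_{R'}(\cdot,R)\ge\tilde\vphi(R)$ behind the monotonicity $\Phi_R\le\Phi_{R'}|_{[0,R]}$), you must first normalize $\tilde h$ so that $\tilde\vphi(0)\le\vphi_0$; this is achievable since both are smooth and $X$ is compact, but it needs to be said.

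The genuine gap is in the final maximality step. You want $\Psi\le\Phi$ on $[s_1,s_2]$ for every subgeodesic $\Psi$ with boundary data $\le\Phi(\cdot,s_i)$, and you propose to get this by comparing with $\vphi_R$ ``after a small perturbation of endpoints.'' This does not work: $\vphi_R(\cdot,s_i)\uparrow\Phi(\cdot,s_i)$ is only pointwise, and $\Phi(\cdot,s_i)$ is a priori merely usc, so Dini's lemma is unavailable and there is no $R$ beyond which $\vphi_R(\cdot,s_i)\ge\Phi(\cdot,s_i)-\epsilon$ everywhere; a competitor $\Psi$ can thus sit strictly between $\vphi_R(\cdot,s_i)$ and $\Phi(\cdot,s_i)$ for every finite $R$, and the intermediate claim $\Psi\le\vphi_R$ is false in general. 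The correct mechanism is Bedford--Taylor continuity of the complex Monge--Amp\`ere operator along increasing, locally uniformly bounded sequences of psh functions: for $R>s_2$, each $\Phi_R$ satisfies $(p_1^*\omega_0+\ddc\Phi_R)^{n+1}=0$ on $\{s_1<s<s_2\}$, the family is locally uniformly bounded there (above by your linear bound, below by $\tilde\Phi-C$), and $\Phi_R\uparrow\Phi$, so the limit also annihilates the Monge--Amp\`ere operator. Maximality of $\Phi|_{[s_1,s_2]}$, and hence the geodesic condition \eqref{eq-geod}, then follows from the Bedford--Taylor comparison principle. This measure-theoretic step, rather than an endpoint-perturbation argument, is what closes the proof.
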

On the other hand, recall that there is a non-Archimedean potential associated to $(\mcX, \mcL)$ (see \eqref{eq-phiTC}). Berman-Boucksom-Jonsson proved that there is a direct relation between geodesic rays and non-Archimedean potentials.  First they showed that any geodesic ray $\Phi$ defines a non-Archimedean potential (cf. \eqref{eq-phiTC}): 
\begin{equation*}
\Phi_\NA(v):=-G(v)(\Phi), \quad \text{for any } v\in X^{\mathrm{div}}_\bQ
\end{equation*}
where $G(v)(\Phi)$ is the generic Lelong number of $\Phi$ considered as a singular quasi-psh potential on a birational model where the center of the valuation $G(v)$ is a prime divisor. 
\begin{theorem}[\cite{BBJ18}]\label{thm-BBJ}
$\mathrm{(1)}$ The map $\Phi\mapsto\Phi_\NA$ has the image contained in $(\cE^1)^\NA$.
Conversely, for any $\phi\in (\cE^1)^\NA$, there exists a geodesic ray denoted by $\gamma(\phi)$ that satisfies $\gamma(\phi)_\NA=\phi$. 
 \\
$\mathrm{(2)}$
For any geodesic ray $\Phi$, $\hat{\Phi}=\gamma(\Phi_\NA)$ satisfies 
$\hat{\Phi}_\NA=\Phi_\NA\in (\cE^1)^\NA$ and $\hat{\Phi}\ge \Phi$.  \\ 
$\mathrm{(3)}$
For $\Phi=\gamma(\phi)$ with $\phi\in (\cE^1)^\NA$, $\bfE'^\infty(\Phi)=\bfE^\NA(\phi)$, and there exists a sequence of test configurations $(\mcX_m, \mcL_m)$ such that $\Phi$ is the decreasing limit of $\Phi_{(\mcX_m, \mcL_m)}$ (see Theorem \ref{thm-PS}).   
\end{theorem}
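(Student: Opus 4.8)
The plan is to treat the three statements in order, since each builds on the previous. For part (1), the key is to understand the relation between the Lelong-number construction $\Phi \mapsto \Phi_\NA$ and the energy functional. Given a geodesic ray $\Phi = \{\vphi(s)\}_{s \ge 0}$ emanating from a smooth potential, I would first establish that $s \mapsto \bfE(\vphi(s))$ is affine (this is Theorem \ref{thm-convex}(1), applied to each finite sub-segment) and in particular has a well-defined slope $\bfE'^\infty(\Phi) = \lim_{s\to\infty} \bfE(\vphi(s))/s$, which is finite because $\bfE$ is monotone and the ray has linear growth controlled by $\sup_X(\vphi(s) - \vphi(0))$. The content of part (1) is then the identity $\bfE'^\infty(\Phi) = \bfE^\NA(\Phi_\NA)$, which forces $\bfE^\NA(\Phi_\NA) > -\infty$, i.e. $\Phi_\NA \in (\cE^1)^\NA$. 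This identity is proved by approximating $\Phi$ from above by geodesic rays $\Phi_{(\mcX_m,\mcL_m)}$ coming from test configurations (Theorem \ref{thm-PS}), for which the equality is the content of Theorem \ref{thm-AvsNA} with $\bfF = \bfE$ combined with the formula \eqref{eq-phiTC} identifying $(\Phi_{(\mcX,\mcL)})_\NA = \phi_{(\mcX,\mcL)}$; one then passes to the limit using continuity of $\bfE^\NA$ in the strong topology and semicontinuity of the slope. For the converse, given $\phi \in (\cE^1)^\NA$, I would use the regularization theorem to write $\phi$ as a decreasing limit of $\phi_{(\mcX_m,\mcL_m)} \in \mcH^\NA$, take the associated Phong-Sturm geodesic rays $\Phi_m = \Phi_{(\mcX_m,\mcL_m)}$ emanating from a fixed $\vphi_0$, and define $\gamma(\phi)$ as the decreasing limit (or its u.s.c. regularization as a path in $\cE^1$); one checks this limit is again a geodesic ray by the maximal characterization \eqref{eq-geodE1} and that $\gamma(\phi)_\NA = \phi$ by computing Lelong numbers termwise.

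For part (2), the idea is that $\Phi \mapsto \Phi_\NA \mapsto \gamma(\Phi_\NA)$ is a kind of "projection onto maximal geodesic rays." Given $\Phi$, set $\hat{\Phi} = \gamma(\Phi_\NA)$. That $\hat{\Phi}_\NA = \Phi_\NA$ is immediate from part (1). The inequality $\hat{\Phi} \ge \Phi$ should follow from a comparison principle: $\hat{\Phi}$ is constructed as a supremum (or decreasing limit of suprema) over candidate rays whose non-Archimedean data dominates that of $\Phi$, and any such ray must lie above $\Phi$ by the domination-of-singularities principle that relates Lelong numbers/valuative behavior to pointwise ordering of psh potentials — concretely, if $\Psi$ is a sub-geodesic with $\Psi_\NA \ge \Phi_\NA$ then $\Psi \ge \Phi$ after comparing on each birational model, so the envelope defining $\hat\Phi$ dominates $\Phi$.

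For part (3), let $\Phi = \gamma(\phi)$. The identity $\bfE'^\infty(\Phi) = \bfE^\NA(\phi)$ is exactly part (1) applied to $\Phi$, using $\Phi_\NA = \phi$. For the approximation statement, I would use the strong-topology regularization of $\phi$ by test-configuration potentials $\phi_m = \phi_{(\mcX_m,\mcL_m)}$ that one may take to be \emph{decreasing} to $\phi$; then the Phong-Sturm rays $\Phi_{(\mcX_m,\mcL_m)}$ emanating from the same base point are decreasing (since the geodesic-ray construction is monotone in the non-Archimedean data, again by the maximal characterization), and their limit is a geodesic ray with the same non-Archimedean potential $\phi$, hence equals $\gamma(\phi) = \Phi$ by the uniqueness built into the construction in part (1).

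The main obstacle I expect is part (1), specifically the identity $\bfE'^\infty(\Phi) = \bfE^\NA(\Phi_\NA)$ for a \emph{general} geodesic ray rather than one coming from a test configuration. For test configurations this is Theorem \ref{thm-AvsNA}, but a general finite-energy geodesic ray need not be a decreasing limit of test-configuration rays \emph{a priori} — establishing that (or directly proving the slope formula via pluripotential estimates controlling $\bfE$ along the ray in terms of the generic Lelong numbers of $\Phi$ on models) is the technical heart. One must control the error between $\bfE(\vphi(s))$ and its candidate linear asymptote uniformly, which requires the strong-topology continuity of $\bfE^\NA$ together with a monotone approximation scheme, and care that the limit of geodesic rays is still a geodesic ray in $\cE^1$ and not merely a sub-geodesic.
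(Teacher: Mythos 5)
The paper, being a survey, only gives a one-sentence sketch of the BBJ proof, but that sentence names precisely the ingredient you are missing: Berman--Boucksom--Jonsson establish $\Phi_\NA\in\PSH^\NA$ by blowing up the multiplier ideal sheaves $\mcJ(m\Phi)$ of the quasi-psh potential $\Phi$ on $X\times\bD$, and using their global generation (via Nadel-type vanishing) to produce test configurations $(\mcX_m,\mcL_m)$ with $\phi_{(\mcX_m,\mcL_m)}$ decreasing to $\Phi_\NA$. Your proposal for part (1) instead \emph{assumes} that $\Phi$ can be approximated from above by geodesic rays of test configurations, and you yourself flag in your final paragraph that this is ``the technical heart'' and not available a priori. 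That is exactly right, and it means your argument is circular at the crucial step: the regularization of an analytic geodesic ray by test configurations is the content of the theorem, not an input to it. The approximation schemes you invoke (strong-topology regularization in $(\cE^1)^\NA$, decreasing limits of $\mcH^\NA$ potentials) are available for non-Archimedean potentials already known to lie in $\PSH^\NA$, but not for the a priori mysterious function $\Phi_\NA$ built from Lelong numbers; bridging this is precisely what the multiplier-ideal construction accomplishes.

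A secondary issue: in part (1) you assert the identity $\bfE'^\infty(\Phi)=\bfE^\NA(\Phi_\NA)$ for an \emph{arbitrary} geodesic ray $\Phi$. That identity is false in general; it characterizes \emph{maximal} rays and is exactly what part (3) states for $\Phi=\gamma(\phi)$. For a general ray one only has an inequality (since $\hat\Phi=\gamma(\Phi_\NA)\ge\Phi$ and $\bfE$ is monotone, one gets $\bfE^\NA(\Phi_\NA)=\bfE'^\infty(\hat\Phi)\ge\bfE'^\infty(\Phi)$), and the Darvas examples of non-maximal rays mentioned in the paper show the inequality can be strict. The inequality in the correct direction does still give $\bfE^\NA(\Phi_\NA)>-\infty$ once $\Phi_\NA\in\PSH^\NA$ is known, so the finite-energy conclusion survives, but you should not present the equality as the content of part (1), and in any case its proof again depends on first establishing $\Phi_\NA\in\PSH^\NA$ via multiplier ideals. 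Your treatment of parts (2) and (3) is reasonable in outline, given part (1): the comparison $\hat\Phi\ge\Phi$ does come from an envelope characterization of $\gamma(\phi)$ over sub-geodesics with dominated non-Archimedean data (with $\Phi$ itself a competitor), and the decreasing approximation in (3) follows from the definition of $\PSH^\NA$ together with monotonicity of the Phong--Sturm construction in the test configuration.
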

Berman-Boucksom-Jonsson proved $\Phi_\NA\in \PSH^\NA$ by blowing up multiplier ideal sheaves $\{\mcJ(m\Phi)\}_{m\in\bN}$ and using their global generation properties to construct test configurations $\{(\mcX_m, \mcL_m)\}$ such that $\phi_{(\mcX_m, \mcL_m)}$ decreases to $\Phi_\NA$. Because of the second statement, any geodesic ray $\gamma(\phi)$ with $\phi\in (\cE^1)^\NA$ is called \textit{maximal} in \cite{BBJ18}. 
By the last statement, maximal geodesic rays can be approximated by (geodesic rays associated to) test configurations. 
Moreover when $\phi=\phi_{(\mcX, \mcL)}\in \mcH^\NA$, $\gamma(\phi)$ coincides with the geodesic ray from Theorem \ref{thm-PS}. 
Further useful properties of maximal geodesic rays are known (cf. Theorem \ref{thm-AvsNA}):
\begin{theorem}[\cite{Li20}]\label{thm-slope}
Let $\Phi$ be a maximal geodesic ray. \\
$\mathrm{(1)}$
We have the identity 
$(\bfE^{-Ric(\omega_0)})'^\infty(\Phi)=(\bfE^{K_X})^\NA(\Phi_\NA)$. \\
$\mathrm{(2)}$
$\bfH'^\infty(\Phi)\ge \bfH^\NA(\Phi_\NA)$. Moreover if $\Phi=\Phi_{(\mcX, \mcL)}$ is associated to a test configuration, then $\bfH'^\infty(\Phi)=\bfH^\NA(\Phi_\NA)$. 
\end{theorem}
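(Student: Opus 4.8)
\emph{Proof proposal.} The plan is to treat (1) as a ``soft'' slope identity inherited from test configurations, and (2) by a direct asymptotic analysis of the Monge--Amp\`ere mass of the ray near the degenerate fibre; the latter is the essential difficulty.

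For (1): since $-Ric(\omega_0)$ is a smooth closed form in $c_1(K_X)$, the first variation of $\bfE^{-Ric(\omega_0)}$ along a path $\vphi_t$ is $\frac{n}{\bV}\int_X\dot\vphi_t\,(-Ric(\omega_0))\wedge\omega_{\vphi_t}^{n-1}$. I would first establish the identity for test configurations, where $(\bfE^{-Ric(\omega_0)})'^\infty(\Phi_{(\mcX,\mcL)})=\frac{1}{\bV}K_X\cdot\ocL^{\cdot n}=(\bfE^{K_X})^\NA(\phi_{(\mcX,\mcL)})$ is an intersection-theoretic computation of the same nature as Theorem~\ref{thm-AvsNA} (it may also be extracted from that theorem by applying the $\bfE$-slope formula to the polarisations $L+\varepsilon K_X$, whose non-Archimedean energies are polynomial in $\varepsilon$, and differentiating at $\varepsilon=0$). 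For a general maximal ray I would then use Theorem~\ref{thm-BBJ}(3) to write $\Phi$ as a decreasing limit of $\Phi_{(\mcX_m,\mcL_m)}$ with $\phi_{(\mcX_m,\mcL_m)}\searrow\Phi_\NA$ strongly, and pass to the limit: the right-hand side is continuous along strong decreasing limits in $(\cE^1)^\NA$, while on the left one writes $-Ric(\omega_0)=C\omega_0-\eta$ with $\eta$ K\"ahler for $C\gg1$ and uses that $\bfE^\eta$ is monotone and convex along geodesics, so that its slope is stable under the decreasing limit, the remaining contribution being handled again by Theorem~\ref{thm-BBJ}(3).

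For the inequality in (2): approximation by test configurations together with lower semicontinuity of the entropy gives the estimate in the wrong direction, so I would argue directly on the total space, in the spirit of \cite{BDL17}. Regard $\omega_\Phi=\omega_0+\ddc\Phi$ as a closed positive $(1,1)$-current on $X\times\bD^*$ (with $s=-\log|t|^2$), extend it across $t=0$, and fix a log resolution $p\colon\mcY\to X\times\bD$ with simple normal crossing central fibre $\sum_j b_jE_j$; the generic Lelong numbers $\nu_j$ of $p^*\omega_\Phi$ along $E_j$, the $b_j$, and the valuations $v_j:=b_j^{-1}\ord_{E_j}|_{\bC(X)}$ encode $\Phi_\NA$, and by the Monge--Amp\`ere formula \eqref{eq-HNA} the quantity $\bfH^\NA(\Phi_\NA)$ is read off from these data as a weighted sum $\sum_j c_j\,A_X(v_j)$ of the log discrepancies (with $c_j\ge0$, $\sum_j c_j=\bV$), at least after the standard reduction to models via the multiplier ideals $\mcJ(m\Phi)$. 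Writing $\bfH(\vphi(s))$ as the relative entropy of the fibre measure $\omega_\Phi^n|_{X_t}$ with respect to the fixed reference $\Omega_0$ (up to an additive constant), $|t|=e^{-s/2}$, the task is to show that this relative entropy grows at least linearly in $s$ with rate $\sum_j c_j\,A_X(v_j)$: as $t\to0$ the fibrewise Monge--Amp\`ere mass concentrates near the centres of the $v_j$ and becomes singular relative to $\Omega_0$, and comparing $p^*\Omega_0$ with the relative volume form of $p$ — so that the discrepancies $a_j$ of \eqref{eq-KYKXD} enter — while keeping track of the Lelong-number weights and invoking the convexity and joint lower semicontinuity of relative entropy yields $\liminf_{s\to\infty}\frac1s\,\bfH(\vphi(s))\ge\sum_j c_j\,A_X(v_j)=\bfH^\NA(\Phi_\NA)$.

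For the equality in (2) and the main obstacle: when $\Phi=\Phi_{(\mcX,\mcL)}$ is the Phong--Sturm ray of Theorem~\ref{thm-PS}, the relevant total space is algebraic (a log resolution of the normal $\mcX$) and the ray is regular enough that no Monge--Amp\`ere mass escapes to higher-codimension strata, so the liminf above is a genuine limit and the inequality just proved becomes an equality. The crux of the whole argument is the lower bound in the previous paragraph: it amounts to a quantitative lower semicontinuity of entropy under the degeneration $t\to0$, accounting for the concentration of Monge--Amp\`ere mass on the special fibre, and it is precisely there that the psh/geodesic structure of $\Phi$ and the positivity built into the resolution picture are indispensable — in contrast with $\bfE,\Lam,\bfE^{-Ric(\omega_0)}$, whose slopes in (1) present no such difficulty.
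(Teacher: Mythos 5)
Your reading of the structure is right: part (1) is soft, approximation by test configurations gives the wrong-direction inequality for entropy, and the heart of the matter is a direct lower bound on the entropy slope. But the proposal for (2) has a genuine gap at exactly that point. You assert that ``the relative entropy grows at least linearly in $s$ with rate $\sum_j c_j A_X(v_j)$'' by ``comparing $p^*\Omega_0$ with the relative volume form of $p$ \dots and invoking the convexity and joint lower semicontinuity of relative entropy,'' but this is a description of the desired conclusion, not an argument. In \cite{Li20} the lower bound is extracted via a Legendre duality (Donsker--Varadhan) for the relative entropy: one tests $\bfH(\vphi(s))$ against quasi-psh functions $g$ adapted to a log resolution, normalized so that $\int_X e^{g}\,\Omega_0<+\infty$ precisely because the pair is klt (the log discrepancy $A_X$ enters as the log-canonical threshold controlling integrability), and the linear part $\frac1s\int_X g\,\omega_{\vphi(s)}^n$ is identified with a non-Archimedean quantity. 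Nothing of that mechanism appears in your sketch.

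There is also a concrete conflation in the setup. You identify $\bfH^\NA(\Phi_\NA)$ with a finite sum $\sum_j c_j A_X(v_j)$ read off from ``the generic Lelong numbers $\nu_j$'' of $p^*\omega_\Phi$ on a single log resolution of $X\times\bD$. But the $c_j$ appearing in $\MA^\NA(\Phi_\NA)=\sum c_j\delta_{v_j}$ are masses coming from (positive) intersection numbers as in \eqref{eq-TCMA}--\eqref{eq-modMA}; they are not the Lelong numbers $\nu_j$, which instead compute the values $\Phi_\NA(v_j)$. Moreover, for a general maximal ray $\MA^\NA(\Phi_\NA)$ is a measure on $X^\NA$ that need not be supported on finitely many divisorial valuations, so no single resolution captures it. Your fallback --- ``the standard reduction to models via the multiplier ideals $\mcJ(m\Phi)$'' --- produces a \emph{decreasing} sequence $\phi_m\searrow\Phi_\NA$, and along such a sequence $\bfH^\NA$ is only \emph{lower} semicontinuous, which is the wrong direction to obtain the finite-sum representation or to conclude $\bfH'^\infty(\Phi)\ge\bfH^\NA(\Phi_\NA)$. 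The same issue would arise if you tried to push the inequality through $\bfH'^\infty(\Phi_m)=\bfH^\NA(\phi_m)$ and then pass to the limit.

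Two smaller points. In (1), writing $-Ric(\omega_0)=C\omega_0-\eta$ and using monotonicity of $\bfE^\eta$ along the decreasing approximation $\Phi_m\searrow\Phi$ gives only one inequality between $(\bfE^\eta)'^\infty(\Phi_m)$ and $(\bfE^\eta)'^\infty(\Phi)$; the reverse direction needs either the continuity of $\bfE^\eta$ under the strong topology combined with the increasing-in-$s$ behaviour of $\bfE^\eta(\vphi(s))/s$, or a comparison of $\bfE^\eta$ with the $\bfE$-functional for an auxiliary polarization to which Theorem~\ref{thm-BBJ}(3) applies. This can be repaired but should not be elided. Finally, for the equality case of (2), the reason it holds for a test configuration ray is not best phrased as ``no Monge--Amp\`ere mass escapes to higher-codimension strata''; it is that the slope identity for $\bfM$ (from \cite{BHJ19}, as in Theorem~\ref{thm-AvsNA}) together with the already-established $\bfE$- and $\bfE^{-Ric(\omega_0)}$-slope formulas pins down the $\bfH$-slope via the decomposition \eqref{eq-Mvphi}.
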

It is natural to conjecture that $\bfH'^\infty(\Phi)= \bfH^\NA(\Phi_\NA)$ always holds for any maximal geodesic ray $\Phi$. 
This is implied by the algebraic Conjecture \ref{conj-Fujapp} according to \cite{Li21, Li20}.

As pointed out in \cite{BBJ18}, by a construction of Darvas, there are abundant non-maximal geodesic rays. In fact analogous local examples have been used by the author to disprove a conjecture of Demailly on Monge-Amp\`{e}re mass of psh singularities. It is thus a surprising fact that maximal geodesic rays are the only ones of interest in the cscK problem. 
\begin{theorem}[\cite{Li20}]\label{thm-maximal}
If a geodesic ray $\Phi$ satisfies $\bfM'^\infty(\Phi)<+\infty$, then $\Phi$ is maximal. 
\end{theorem}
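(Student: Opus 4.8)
Write $\Phi=\{\vphi_s\}_{s\ge 0}$ with base potential $\vphi_0$, and let $\hat\Phi=\{\hat\vphi_s\}_{s\ge0}:=\gamma(\Phi_\NA)$ be the maximal geodesic ray with $\hat\Phi_\NA=\Phi_\NA$ (here $\Phi_\NA\in(\cE^1)^\NA$ by Theorem \ref{thm-BBJ}(1)); by Theorem \ref{thm-BBJ}(2) it also emanates from $\vphi_0$ and $\hat\vphi_s\ge\vphi_s$ for all $s$. The goal is to show $\Phi=\hat\Phi$. The plan is to first reduce this to the single identity $\bfE'^\infty(\Phi)=\bfE^\NA(\Phi_\NA)$. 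Indeed, $s\mapsto\bfE(\vphi_s)$ and $s\mapsto\bfE(\hat\vphi_s)$ are both affine (Theorem \ref{thm-convex}(1)) and agree at $s=0$, while monotonicity of $\bfE$ and Theorem \ref{thm-BBJ}(3) give $\bfE'^\infty(\Phi)\le\bfE'^\infty(\hat\Phi)=\bfE^\NA(\Phi_\NA)$; if the reverse inequality also holds, then $\bfE(\vphi_s)=\bfE(\hat\vphi_s)$ for every $s$, and since $\hat\vphi_s\ge\vphi_s$ in $\cE^1$ the domination principle for the Monge--Amp\`ere energy (if $\psi\ge\vphi$ in $\cE^1$ with $\bfE(\psi)=\bfE(\vphi)$ then $\psi=\vphi$) forces $\vphi_s=\hat\vphi_s$, i.e. $\Phi=\hat\Phi$ is maximal. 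So the whole problem is to rule out a strictly positive energy defect $D:=\bfE^\NA(\Phi_\NA)-\bfE'^\infty(\Phi)\ge 0$.

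The hypothesis is then used to control entropy growth. From $\bfH(\vphi_s)=\bfM(\vphi_s)-\bfE^{-Ric(\omega_0)}(\vphi_s)-\ud{S}\cdot\bfE(\vphi_s)$: convexity of $s\mapsto\bfM(\vphi_s)$ (Theorem \ref{thm-convex}(2)) together with $\bfM'^\infty(\Phi)<+\infty$ gives $\bfM(\vphi_s)\le\bfM(\vphi_0)+\bfM'^\infty(\Phi)\,s$; the term $\ud{S}\cdot\bfE(\vphi_s)$ is affine in $s$; and $\bfE^{-Ric(\omega_0)}(\vphi_s)$ grows at most linearly in $s$ (differentiate along the geodesic: $\frac{d}{ds}\bfE^\chi(\vphi_s)=\frac{n}{\bV}\int_X\dot\vphi_s\,\chi\wedge\omega_{\vphi_s}^{n-1}$, and write $-Ric(\omega_0)$ as a difference of K\"ahler forms). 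Hence $\bfH(\vphi_s)\le C(1+s)$, that is, the entropy has at most linear growth along $\Phi$.

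The heart of the matter---and the step I expect to be the main obstacle---is that at most linear entropy growth already forces $D=0$; equivalently, a non-maximal geodesic ray must have superlinear (hence $+\infty$) entropy slope, which is what makes $\bfM'^\infty=+\infty$ there. To prove this I would exploit the maximal ray $\hat\Phi$ and its test-configuration approximation: by Theorem \ref{thm-BBJ}(3) there are test configurations $(\mcX_m,\mcL_m)$ with $\Phi_{(\mcX_m,\mcL_m)}\searrow\hat\Phi$ and $\phi_{(\mcX_m,\mcL_m)}\searrow\Phi_\NA$, for which $\bfE'^\infty(\Phi_{(\mcX_m,\mcL_m)})=\bfE^\NA(\phi_{(\mcX_m,\mcL_m)})\to\bfE^\NA(\Phi_\NA)$ and $\bfH'^\infty(\Phi_{(\mcX_m,\mcL_m)})=\bfH^\NA(\phi_{(\mcX_m,\mcL_m)})$ (Theorem \ref{thm-slope}(2)). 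Since $\vphi_s\le\hat\vphi_s\le\vphi_s^{(m)}$, a positive defect means $\bfE(\vphi_s^{(m)})-\bfE(\vphi_s)\ge(D-\varepsilon_m)s$ for all large $m$, i.e. the potentials $\vphi_s$ stay below the model ones $\vphi_s^{(m)}$ by an amount of energy growing linearly in $s$; one wants to show this is incompatible with $\bfH(\vphi_s)\le C(1+s)$, the mechanism being that the lost energy forces $\omega_{\vphi_s}^n$ to concentrate relative to the fixed smooth volume form $\Omega_0$ and hence makes $\bfH(\vphi_s)$ superlinear. Making this concentration-to-entropy mechanism quantitative---in effect, a strengthening of the semicontinuity $\bfH'^\infty(\Phi)\ge\bfH^\NA(\Phi_\NA)$ that detects the failure of maximality---is the technical crux; once it is in place, $D=0$ and $\Phi=\gamma(\Phi_\NA)$ is maximal.
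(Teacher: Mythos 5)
Your reduction is sound and matches the structure of the argument: it is indeed enough to show $\bfE'^\infty(\Phi)=\bfE^\NA(\Phi_\NA)$, since $\hat\vphi_s\ge\vphi_s$ with equal energies forces equality (domination principle); and your observation that $\bfM'^\infty(\Phi)<+\infty$ together with convexity of $\bfM$ and affineness/linear growth of the other two terms yields $\bfH(\vphi_s)\le C(1+s)$ is correct and is the right first consequence to extract. But you have explicitly left the heart of the proof as an unestablished ``concentration-to-entropy mechanism,'' and that is precisely where the paper supplies an argument your proposal does not contain.

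The missing ingredients, as the paper describes them, are the following. First, since $\hat\Phi=\gamma(\Phi_\NA)$ has the \emph{same} non-Archimedean potential as $\Phi$ — equivalently, the same generic Lelong numbers along every divisorial valuation — one obtains an \emph{equi-singularity} integrability statement
\[
\int_{X\times\{|t|<1\}} e^{-\alpha(\hat\Phi-\Phi)}<+\infty\qquad\text{for every }\alpha>0,
\]
which is an analytic upgrade of ``$\Phi_\NA=\hat\Phi_\NA$'' (it follows from the multiplier-ideal approximation used in \cite{BBJ18}). This is what lets one compare the two rays at the level of integrals, not just valuations. Second, one plugs this into \emph{Jensen's inequality} to relate $\bfH(\vphi_s)$ to $\hat\vphi_s-\vphi_s$, and then uses a \emph{comparison principle for $\bfE$} to convert the resulting bound into an energy estimate contradicting finiteness of $\bfM'^\infty(\Phi)$ whenever $\hat\Phi\neq\Phi$. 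Your sketch gestures at wanting a ``strengthened semicontinuity of $\bfH$,'' but the actual tool is not a sharpened lower semicontinuity of the entropy functional; it is the equi-singularity estimate on $\hat\Phi-\Phi$ combined with Jensen. Without this, your argument does not close: linear entropy growth by itself does not visibly force $D=0$, and you have no quantitative link between the energy defect $D$ and the entropy. So the proposal correctly frames the problem but is missing the decisive analytic input.
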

Note that $\bfM'^\infty(\Phi)=\lim_{s\rightarrow+\infty}\frac{\bfM(\vphi(s))}{s}$ exists by Theorem \ref{thm-convex}. 
This result resolves a difficulty raised in Boucksom's ICM talk \cite{Bou18}, and implies that destabilizing geodesic rays can always be approximated by  test configurations, thus giving a very strong evidence for the validity of Yau-Tian-Donaldson Conjecture \ref{conj-YTD}.
The proof of Theorem \ref{thm-maximal} starts with an equi-singular property $\int_{X\times \{|t|<1\}} e^{-\alpha(\hat{\Phi}-\Phi)}<+\infty$ for any $\alpha>0$, and then uses Jensen's inequality together with a comparison principle for the $\bfE$ functional to get a contradiction with the finite slope assumption if $\hat{\Phi}=\gamma(\Phi_\NA)\neq\Phi$. 

\subsection{Yau-Tian-Donaldson conjecture for general polarized manifolds}\label{sec-YTD}

The Yau-Tian-Donaldson (YTD) conjecture says that the existence of canonical K\"{a}hler metrics on projective manifolds should be equivalent to certain K-stability condition. For a general polarization it is believed that one needs to use a strengthened definition of K-stability such as Definition \ref{def-uniKst}.
In particular, we have the following version.
\begin{conj}[YTD conjecture]\label{conj-YTD}
A polarized manifold $(X, L)$ admits a cscK metric if and only if $(X, L)$ is reduced uniformly K-stable. 
\end{conj}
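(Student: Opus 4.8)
The plan is to establish the two implications separately, since the necessity direction is by now well understood and the sufficiency direction is the genuine heart of the matter. For the implication ``cscK metric exists $\Rightarrow$ reduced uniformly K-stable,'' I would proceed as follows. First, by Theorem \ref{thm-cscKcri}, the existence of a $\tilde T$-invariant cscK metric in $c_1(L)$ is equivalent to the reduced coercivity \eqref{eq-redcoer} of the Mabuchi functional $\bfM$ on $\mcH^{\tilde T}$. Given any test configuration $(\mcX,\mcL)$, I would pass to its associated maximal geodesic ray $\Phi_{(\mcX,\mcL)}$ emanating from a fixed smooth potential (Theorem \ref{thm-PS}), and take slopes at infinity: by Theorem \ref{thm-AvsNA}, $\bfM'^\infty(\Phi_{(\mcX,\mcL)})=\bfM^\NA(\mcX,\mcL)$ and $\bfJ'^\infty(\Phi_{(\mcX,\mcL)})=\bfJ^\NA(\mcX,\mcL)$, and similarly the slope of $\bfJ$ along the $\tilde\bT$-twisted ray recovers $\bfJ^\NA(\mcX_\xi,\mcL_\xi)$. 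Dividing the coercivity inequality \eqref{eq-redcoer} by $s$ and letting $s\to+\infty$ then yields $\bfM^\NA(\mcX,\mcL)\ge \gamma\cdot\inf_{\xi\in\tilde N_\bQ}\bfJ^\NA(\mcX_\xi,\mcL_\xi)$, which is precisely reduced uniform K-stability (Definition \ref{def-uniKst}). The only subtlety here is commuting the infimum over $\sigma\in\tilde\bT$ with the limit in $s$, which is handled by the convexity in Theorem \ref{thm-convex} and the fact that the twist is itself a one-parameter family; this is the Darvas--Rubinstein principle alluded to after Theorem \ref{thm-cscKcri}.

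For the sufficiency direction ``reduced uniformly K-stable $\Rightarrow$ cscK metric exists,'' which is where the difficulty lies, the strategy is again to reduce to reduced coercivity of $\bfM$ via Theorem \ref{thm-cscKcri}, arguing by contradiction. Suppose $\bfM$ is not reduced coercive on $\mcH^{\tilde T}$; then there is a sequence $\vphi_j\in\mcH^{\tilde T}$ with $\bfM(\vphi_j)\le \varepsilon_j\inf_{\sigma\in\tilde\bT}\bfJ(\sigma^*\vphi_j)-C_j$ violating \eqref{eq-redcoer} for every $\gamma$. Normalizing by the $\tilde\bT$-action so that $\bfJ(\vphi_j)$ is comparable to $\inf_\sigma\bfJ(\sigma^*\vphi_j)$, set $\ell_j=\bfJ(\vphi_j)$; after passing to a subsequence one may assume $\ell_j\to+\infty$ (the bounded case gives a minimizing sequence in a fixed sublevel set, handled separately by the compactness properties of $\cE^1$). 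Connect a fixed potential $\vphi_0$ to $\vphi_j$ by the geodesic segment parametrized on $[0,\ell_j]$ by arclength; by a diagonal/Arzel\`a--Ascoli argument in the strong topology on $\cE^1$ these segments subconverge to a geodesic ray $\Phi$ with $\bfJ$-slope $1$. Using convexity of $\bfM$ along geodesics (Theorem \ref{thm-convex}) one extracts $\bfM'^\infty(\Phi)\le 0$, so in particular $\bfM'^\infty(\Phi)<+\infty$; Theorem \ref{thm-maximal} then forces $\Phi$ to be \emph{maximal}, hence $\Phi=\gamma(\phi)$ for $\phi=\Phi_\NA\in(\cE^1)^\NA$ with $\bfJ^\NA(\phi)=1$. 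Now I would invoke the slope formulas: $\bfE'^\infty(\Phi)=\bfE^\NA(\phi)$ and $(\bfE^{-Ric})'^\infty(\Phi)=(\bfE^{K_X})^\NA(\phi)$ by Theorem \ref{thm-slope}(1), together with $\bfH'^\infty(\Phi)\ge\bfH^\NA(\phi)$ by Theorem \ref{thm-slope}(2); adding these up with the coefficients in \eqref{eq-Mvphi}, \eqref{eq-MabNA} gives $0\ge\bfM'^\infty(\Phi)\ge\bfM^\NA(\phi)$. Finally, approximating $\phi$ strongly by test configurations (Theorem \ref{thm-BBJ}(3) and the regularization results) and using continuity of $\bfE^\NA$, $\bfJ^\NA$ together with lower semicontinuity of $\bfM^\NA$ along such approximations, I would produce a test configuration $(\mcX,\mcL)$ with $\bfM^\NA(\mcX,\mcL)<\gamma\cdot\bfJ^\NA(\mcX_\xi,\mcL_\xi)$ for any prescribed $\gamma$, contradicting reduced uniform K-stability.

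The main obstacle is precisely the point where the non-Archimedean Mabuchi functional must be controlled from below along the approximating test configurations, i.e. the lower semicontinuity $\liminf_k\bfM^\NA(\mcX_k,\mcL_k)\ge\bfM^\NA(\phi)$ combined with the reverse slope inequality $\bfH'^\infty(\Phi)\le\bfH^\NA(\phi)$ — equivalently the equality $\bfH'^\infty(\Phi)=\bfH^\NA(\Phi_\NA)$ for maximal geodesic rays. As emphasized after Theorem \ref{thm-slope}, this is exactly the open problem implied by Conjecture \ref{conj-Fujapp}, and without it one only gets the chain of inequalities in one direction, which is insufficient to close the contradiction. One way to circumvent it in the Fano case is to replace $\bfM$ by the Ding functional $\bfD$, for which the analogous slope formula $\bfL'^\infty(\Phi)=\bfL^\NA(\phi)$ does hold unconditionally for maximal rays (by the equisingularity estimate underlying Theorem \ref{thm-maximal}), reducing YTD to uniform Ding-stability and then to K-stability via Theorems \ref{thm-special} and \ref{thm-LXZ}; but for a general polarization no such Ding-type substitute is available, so the cscK case of the conjecture remains genuinely conditional on the entropy approximation / Fujita-type Conjecture \ref{conj-Fujapp}. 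I would therefore present the argument as a conditional proof, isolating $\bfH'^\infty(\Phi)=\bfH^\NA(\Phi_\NA)$ as the single remaining input, and note that the reduced setting (polystability vs.\ coercivity modulo $\Aut$) adds no essential new difficulty beyond keeping track of the $\tilde\bT$-twists, which are $\NA$-continuous by Example \ref{exmp-twist}.
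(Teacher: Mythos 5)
The statement you are addressing is labelled a \emph{conjecture} in the paper, and the paper does not prove it unconditionally; your proposal correctly recognizes this, and your overall architecture matches the paper's own discussion around Theorem~\ref{thm-wYTD} and Conjecture~\ref{conj-Fujapp}. The known direction (existence $\Rightarrow$ reduced uniform K-stability) is handled exactly as the paper indicates, via Theorems~\ref{thm-cscKcri} and~\ref{thm-AvsNA} together with the Darvas--Rubinstein framework. For the hard direction you faithfully reproduce the variational/non-Archimedean contradiction scheme of \cite{BBJ18, Li20}: extract a destabilizing geodesic ray $\Phi$ with $\bfM'^\infty(\Phi)\le 0$, apply Theorem~\ref{thm-maximal} to conclude $\Phi$ is maximal with $\phi=\Phi_\NA\in(\cE^1)^\NA$, then use Theorem~\ref{thm-slope} to get $0\ge\bfM'^\infty(\Phi)\ge\bfM^\NA(\phi)$, and correctly isolate the missing ingredient as the equality $\bfH'^\infty(\Phi)=\bfH^\NA(\Phi_\NA)$, equivalently the entropy approximation or Conjecture~\ref{conj-Fujapp}. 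This is precisely why the paper proves only the weaker Theorem~\ref{thm-wYTD} (stability over \emph{models}, where Theorem~\ref{thm-appmodel} supplies the needed convergence), and then notes that Conjecture~\ref{conj-Fujapp} would upgrade it to Conjecture~\ref{conj-YTD}. Your observation that the Fano case bypasses the gap via the Ding functional, special test configurations, and Theorems~\ref{thm-special}, \ref{thm-LXZ} is also accurate.

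One point to correct in your write-up: you repeatedly invoke ``lower semicontinuity $\liminf_k\bfM^\NA(\mcX_k,\mcL_k)\ge\bfM^\NA(\phi)$'' as the missing ingredient. This has the inequality pointing the wrong way. Lower semicontinuity of $\bfH^\NA$ (hence of $\bfM^\NA$ modulo the continuous terms) in the strong topology is the \emph{known} half; it would not let you produce a destabilizing test configuration from $\bfM^\NA(\phi)\le 0$. What you actually need is $\limsup_k\bfM^\NA(\mcX_k,\mcL_k)\le\bfM^\NA(\phi)$, i.e.\ the existence of a test-configuration approximation $\phi_k\to\phi$ with $\bfH^\NA(\phi_k)\to\bfH^\NA(\phi)$ (the Boucksom--Jonsson regularization conjecture), or equivalently the reverse slope inequality $\bfH'^\infty(\Phi)\le\bfH^\NA(\phi)$ which you also mention. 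You state both versions of the gap, which internally conflicts; keep only the $\limsup$/upper-bound version. With that correction your diagnosis is accurate and your conditional reduction agrees with the paper's.
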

The implication from existence to stability is known, and follows from Theorem \ref{thm-cscKcri} and Theorem \ref{thm-AvsNA}. 
 The other direction is still open in general. However, based on the results discussed thus far, we can explain the proof of a weak version. 
\begin{theorem}[\cite{Li20}]\label{thm-wYTD}
If $(X, L)$ is uniformly stable over models, i.e. there exists $\gamma>0$ such that $\bfM^\NA(\mcX, \mcL)\ge \gamma \cdot \bfJ^\NA(\mcX, \mcL)$ for any model $(\mcX, \mcL)$, then it admits a cscK metric. 
\end{theorem}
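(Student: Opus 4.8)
The plan is to run the variational strategy of Berman--Boucksom--Jonsson, using the non-Archimedean functionals above as slopes at infinity, and to argue by contradiction against the analytic criterion of Theorem~\ref{thm-cscKcri}. First note that the hypothesis forces $\tilde{\bT}$ to be trivial: otherwise a product test configuration with a nontrivial $\bC^*$-action (in particular a model) has $\bfM^\NA$ equal to the classical Donaldson--Futaki weight, which changes sign when the action is inverted, while its $\bfJ^\NA$ stays strictly positive, contradicting $\bfM^\NA\ge\gamma\,\bfJ^\NA>0$. Hence reduced coercivity of $\bfM$ coincides with ordinary coercivity, and by Theorem~\ref{thm-cscKcri} (the sufficiency direction for cscK being Chen--Cheng's \cite{CC21}) it suffices to prove that $\bfM$ is coercive on $\mcH$. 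Assume it is not. Since $\bfM$ is bounded below on every bounded $d_1$-ball (the entropy being nonnegative and $\bfE,\bfE^{-Ric(\omega_0)}$ bounded there), failure of coercivity yields, after sup-normalization, potentials $\vphi_j\in\mcH$ with $\bfJ(\vphi_j)\to+\infty$ and $\bfM(\vphi_j)\le\epsilon_j\,\bfJ(\vphi_j)$, $\epsilon_j\to0^+$.

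The heart of the argument is to produce a destabilizing geodesic ray. Let $\Phi_j=\{\vphi_j(s)\}_{s\in[0,T_j]}$ be the finite-energy geodesic from the reference potential $0$ to $\vphi_j$, parametrized by $d_1$-arclength. For sup-normalized potentials $d_1(0,\vphi)=-\bfE(\vphi)\ge\bfJ(\vphi)$ (\cite{Dar15}), so $T_j=-\bfE(\vphi_j)\to+\infty$ and $\bfM(\vphi_j)/T_j\le\epsilon_j$. Convexity of $\bfM$ along geodesics (Theorem~\ref{thm-convex}(2)) then gives
\[
\bfM(\vphi_j(s))\ \le\ \Big(1-\tfrac{s}{T_j}\Big)\bfM(0)+\tfrac{s}{T_j}\bfM(\vphi_j)\ \le\ C+\epsilon_j\, s,\qquad 0\le s\le T_j,
\]
with $C=|\bfM(0)|$. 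Since $\bfE$ and $\bfE^{-Ric(\omega_0)}$ are bounded on bounded $d_1$-balls, this forces the entropy $\bfH(\vphi_j(s))$ to be uniformly bounded on each fixed interval $[0,s_0]$; by the $d_1$-compactness of $\{d_1(\cdot,0)\le s_0\}\cap\{\bfH\le A\}$ in $\cE^1$ (\cite{BBEGZ}) one extracts, diagonally over a countable dense set of parameters, a subsequence with $\vphi_j(s)\to\vphi(s)$ in $d_1$. The limit is a geodesic ray $\Phi=\{\vphi(s)\}_{s\ge0}$ emanating from $0$, nontrivial because $d_1(\vphi(s),0)=\lim_j d_1(\vphi_j(s),0)=s$. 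Lower semicontinuity of $\bfM$ under $d_1$-convergence together with the displayed bound give $\bfM(\vphi(s))\le C$ for every $s$, so by Theorem~\ref{thm-convex}(2) the slope $\bfM'^\infty(\Phi)=\lim_{s\to\infty}s^{-1}\bfM(\vphi(s))$ exists and is $\le0$.

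Now I would invoke the non-Archimedean dictionary. Since $\bfM'^\infty(\Phi)\le0<+\infty$, Theorem~\ref{thm-maximal} shows $\Phi$ is maximal, so $\phi:=\Phi_\NA\in(\cE^1)^\NA$ and $\Phi=\gamma(\phi)$; moreover $\phi$ is non-constant (else $\gamma(\phi)$ would be the trivial ray), hence $\bfJ^\NA(\phi)>0$. Combining the decomposition $\bfM=\bfH+\bfE^{-Ric(\omega_0)}+\ud{S}\cdot\bfE$ of \eqref{eq-Mvphi} with $\bfE'^\infty(\Phi)=\bfE^\NA(\phi)$ (Theorem~\ref{thm-BBJ}(3)), $(\bfE^{-Ric(\omega_0)})'^\infty(\Phi)=(\bfE^{K_X})^\NA(\phi)$ (Theorem~\ref{thm-slope}(1)) and $\bfH'^\infty(\Phi)\ge\bfH^\NA(\phi)$ (Theorem~\ref{thm-slope}(2)) yields $\bfM'^\infty(\Phi)\ge\bfM^\NA(\phi)$ via \eqref{eq-MabNA}. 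To pass $\phi$ through the model-stability hypothesis, apply Theorem~\ref{thm-appmodel}(1): there are models $(\mcX_k,\mcL_k)$ with $\phi_k:=\phi_{(\mcX_k,\mcL_k)}\to\phi$ strongly and $\bfH^\NA(\phi_k)\to\bfH^\NA(\phi)$; since $\bfE^\NA$, $\Lam^\NA$ and $(\bfE^{K_X})^\NA$ are strongly continuous on $(\cE^1)^\NA$, also $\bfM^\NA(\phi_k)\to\bfM^\NA(\phi)$ and $\bfJ^\NA(\phi_k)\to\bfJ^\NA(\phi)$. The hypothesis gives $\bfM^\NA(\phi_k)\ge\gamma\,\bfJ^\NA(\phi_k)$, so in the limit
\[
0\ \ge\ \bfM'^\infty(\Phi)\ \ge\ \bfM^\NA(\phi)\ \ge\ \gamma\,\bfJ^\NA(\phi)\ >\ 0,
\]
a contradiction. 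Hence $\bfM$ is coercive and $(X,L)$ admits a cscK metric.

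I expect the second paragraph to be the main obstacle. Theorem~\ref{thm-convex} and the $d_1$-compactness of entropy sublevels are quotable, but one must treat carefully the fact that the bound $\bfM(\vphi_j(s))\le C+\epsilon_j s$ degenerates as $s$ grows, so the limiting object has to be extracted by a diagonal procedure over bounded parameter windows, and one must verify that it is a genuine geodesic ray of positive ``speed'' (equivalently, with $\bfJ^\NA(\Phi_\NA)>0$) and not a degenerate shorter segment. It is also worth stressing why the statement is restricted to models: the only available approximation of a general $\phi\in(\cE^1)^\NA$ with converging entropy is Theorem~\ref{thm-appmodel}(1), by models, the analogue with test configurations being equivalent to the open Conjecture~\ref{conj-Fujapp}; on the Archimedean side this is mirrored by the fact that Theorem~\ref{thm-slope}(2) only provides the one-sided inequality $\bfH'^\infty(\Phi)\ge\bfH^\NA(\Phi_\NA)$.
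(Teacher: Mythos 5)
Your proof follows exactly the same four-step strategy as the paper's summary of proof, invoking the same results (Theorem~\ref{thm-cscKcri}, the BBJ/DR construction of a destabilizing ray via Theorem~\ref{thm-convex} and the \cite{BBEGZ} compactness, Theorem~\ref{thm-maximal}, the slope dictionary Theorems~\ref{thm-BBJ} and~\ref{thm-slope}, and the model approximation Theorem~\ref{thm-appmodel}) at the same places. The one genuine addition is your preliminary observation that the uniform model-stability hypothesis forces $\tilde{\bT}$ to be trivial, obtained by applying the inequality to a product test configuration and its inverse and using the sign change of the Futaki invariant; this usefully explains why ordinary coercivity (rather than reduced coercivity) is the right target, a point the paper handles implicitly while separately remarking that a version with continuous $\Aut(X,L)$ exists in \cite{Li20}.
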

\begin{proof}[Summary of proof]
\textbf{Step 1:} 
By Theorem \ref{thm-cscKcri}, we need to show that $\bfM$ is coercive. 
Assume that the coercivity fails. Then there exists a geodesic ray $\Phi=\{\vphi(s)\}_{s\in [0, \infty)}$ satisfying:
\begin{equation*}
\bfM'^\infty(\Phi)\le 0, \quad \bfJ'^\infty(\Phi)=1, \quad \sup(\vphi(s))=0.  
\end{equation*}
Such a destabilizing geodesic ray $\Phi$ was constructed in \cite{BBJ18, DR17} from a destabilizing sequence. In this construction, both the convexity of $\bfM$ from Theorem \ref{thm-convex} and a compactness result for potentials with uniform entropy bounds from \cite{BBEGZ} play crucial roles. \\
\textbf{Step 2:} By Theorem \ref{thm-maximal}, $\Phi$ is maximal. Set $\phi=\Phi_\NA$. By using Theorem \ref{thm-BBJ}.(iii) and Theorem \ref{thm-slope}.(1), we derive the identities:
\begin{equation*}
\bfE'^\infty(\Phi)=\bfE^\NA(\phi), \quad (\bfE^{-Ric(\omega_0)})'^\infty(\Phi)=(\bfE^{K_X})^\NA(\phi), \quad \bfJ'^\infty(\Phi)=\bfJ^\NA(\phi)
\end{equation*}
Moreover by Theorem \ref{thm-slope}.(2), $\bfH'^\infty(\Phi)\ge \bfH^\NA(\phi)$ so that $\bfM'^\infty(\Phi)\ge \bfM^\NA(\phi)$. \\
\textbf{Step 3:} By Theorem \ref{thm-appmodel}, there exist models $(\mcX_m, \mcL_m)$ such that $\phi_m=\phi_{(\mcX_m, \mcL_m)}$ converges to $\phi$ in the strong topology and
\begin{equation*}
\lim_{m\rightarrow+\infty} \bfM^\NA(\phi_m)=\bfM^\NA(\phi), \quad \lim_{m\rightarrow+\infty}\bfJ^\NA(\phi_m)=\bfJ^\NA(\phi).
\end{equation*}
\textbf{Step 4:} 
We can complete the proof by getting a contradiction to the stability assumption:
\begin{align*}
0\ge \bfM'^\infty(\Phi)\ge \bfM^\NA(\phi)&=\lim_{m\rightarrow+\infty} \bfM^\NA(\phi_m)\ge_{\mathrm{stability}} \lim_{m\rightarrow+\infty} \bfJ^\NA(\phi_m)=\bfJ^\NA(\phi)=1. 
\end{align*}
\end{proof}
There is a version of Theorem \ref{thm-wYTD} in \cite{Li20} when $\Aut(X, L)$ is continuous. Moreover it is
shown in \cite{Li21} that Conjecture \ref{conj-Fujapp} implies Conjecture \ref{conj-YTD}. As mentioned earlier, if $(X, L)$ is any polarized spherical manifold, conjecture \ref{conj-Fujapp} is true and hence in this case the YTD conjecture \ref{conj-YTD} is proved. Based on this fact, Delcroix (\cite{Del20}) obtained further refined existence results in this case. 

We should mention that Sean Paul (see \cite{Pau12}) has works that give a beautiful interpretation of the coercivity of $\bfM$-functional using a new notion of stability for pairs. However it is not clear how K-stability discussed here can directly imply his stability notion. 

\subsection{YTD conjecture for Fano varieties}

\subsubsection{Non-Archimedean approach}

Our proof of Theorem \ref{thm-wYTD} is in fact modeled on a non-Archimedean approach to the uniform YTD conjecture proposed by Berman-Boucksom-Jonsson in \cite{BBJ18}. They carried it out sucessfully for smooth Fano manifolds with discrete automorphism groups. The main advantage in the Fano case is that $\bfD^\NA$ satisfies a regularization property and can be used in place of $\bfM^\NA$ to complete the argument. Recently their work has been extended to the most general setting of log Fano pairs. 
\begin{theorem}[\cite{LTW20, LTW19, Li19}]\label{thm-YTDlog}
A log Fano pair $(X, D)$ admits a K\"{a}hler-Einstein metric if and only if it is reduced uniformly stable for all special test configurations. 
\end{theorem}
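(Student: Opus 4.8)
The plan is to run the non-Archimedean variational strategy of Berman--Boucksom--Jonsson, but with the Ding functional $\bfD=-\bfE+\bfL$ in place of the Mabuchi functional $\bfM$, and to carry it out over klt log Fano pairs with a possibly continuous automorphism group. The necessity direction is the general Darvas--Rubinstein principle: existence of a K\"ahler--Einstein metric on $(X,D)$ forces $\bfD$ to be reduced coercive on $\cE^1(\omega_0)$, where $\omega_0\in c_1(-(K_X+D))$; then for any test configuration $(\mcX,\mcL)$ one feeds the associated geodesic ray into the slope-at-infinity identity $\bfD'^\infty(\Phi_{(\mcX,\mcL)})=\bfD^\NA(\mcX,\mcL)$ (the Ding analog of Theorem~\ref{thm-AvsNA}, together with Theorem~\ref{thm-BBJ}), so that reduced coercivity yields $\bfD^\NA(\mcX,\mcL)\ge\gamma\inf_{\xi\in\tilde{N}_\bQ}\bfJ^\NA(\mcX_\xi,\mcL_\xi)$. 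On special test configurations $\bfD^\NA=\bfM^\NA$, which is the asserted reduced uniform stability.

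For the converse I would argue by contradiction. First, by Theorem~\ref{thm-special} (in its reduced, log Fano form) the hypothesis upgrades from special test configurations to \emph{all} test configurations: $\bfD^\NA(\mcX,\mcL)\ge\gamma\inf_{\xi}\bfJ^\NA(\mcX_\xi,\mcL_\xi)$. By the log Fano analog of Theorem~\ref{thm-cscKcri} for the Ding functional, it suffices to show $\bfD$ is reduced coercive, so assume it is not. Exactly as in Step~1 of the proof of Theorem~\ref{thm-wYTD}, using convexity of $\bfD$ along geodesics (the Ding version of Theorem~\ref{thm-convex}) and the compactness of potentials with bounded Ding energy (the log Fano extension of \cite{BBEGZ}), one produces a $\tilde T$-invariant destabilizing geodesic ray $\Phi=\{\vphi(s)\}_{s\ge0}$ with $\sup_X\vphi(s)=0$, with $\bfD'^\infty(\Phi)\le 0$, and normalized so that the slope of $s\mapsto\inf_{\sigma\in\tilde\bT}\bfJ(\sigma^\ast\vphi(s))$ is $1$.

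Next, by (the Ding version of) Theorem~\ref{thm-maximal} the ray $\Phi$ is maximal, so $\phi:=\Phi_\NA\in(\cE^1)^\NA$. Combining Theorem~\ref{thm-BBJ}, the slope identity $\bfE'^\infty(\Phi)=\bfE^\NA(\phi)$, the slope inequality $\bfL'^\infty(\Phi)\ge\bfL^\NA(\phi)$, and the matching of the reduced $\bfJ$-slope of $\Phi$ with $\inf_{\xi}\bfJ^\NA(\phi_\xi)$ (via the twist identities of Example~\ref{exmp-twist} and \cite{Li19,His16b}), one gets $\bfD'^\infty(\Phi)\ge\bfD^\NA(\phi)$ and $\inf_{\xi}\bfJ^\NA(\phi_\xi)=1$. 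Now one invokes the regularization of $\bfD^\NA$: because $\bfD^\NA=-\bfE^\NA+\bfL^\NA$ with $\bfE^\NA$ continuous and $\bfL^\NA$ upper semicontinuous in the strong topology, the model approximation of Theorem~\ref{thm-appmodel} can be sharpened to genuine test configurations $(\mcX_m,\mcL_m)$ with $\phi_m:=\phi_{(\mcX_m,\mcL_m)}\to\phi$ strongly, $\bfD^\NA(\phi_m)\to\bfD^\NA(\phi)$, and $\inf_{\xi}\bfJ^\NA(\phi_{m,\xi})\to\inf_{\xi}\bfJ^\NA(\phi_\xi)=1$. Chaining,
\begin{align*}
0\ \ge\ \bfD'^\infty(\Phi)\ \ge\ \bfD^\NA(\phi) &= \lim_{m\to\infty}\bfD^\NA(\phi_m) \\
&\ge\ \gamma\lim_{m\to\infty}\inf_{\xi}\bfJ^\NA(\phi_{m,\xi})\ =\ \gamma\ >\ 0,
\end{align*}
a contradiction. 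Finally, the bounded $\omega_0$-psh solution produced by reduced coercivity is upgraded to an honest KE metric (orbifold smooth on the regular locus, with cone angles along $D$) by the regularity theory recalled in \S\ref{sec-logFano}.

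The main obstacle is that essentially every ingredient above --- the variational characterization through $\bfD$, compactness of finite-Ding-energy potentials, the maximality statement Theorem~\ref{thm-maximal} with its underlying multiplier-ideal and equisingularity estimates, the slope identities of Theorem~\ref{thm-slope}, and the minimal-model-program reduction to special test configurations (Theorem~\ref{thm-special}) --- has to be re-established over \emph{klt log Fano pairs} rather than smooth Fano manifolds, i.e. on resolutions where one must carefully track the boundary $D$ and the exceptional data; and simultaneously one must thread the action of the maximal torus $\tilde\bT$ through every step so that the destabilizing ray is built already minimized over twists and its reduced $\bfJ$-slope matches $\inf_{\xi}\bfJ^\NA(\phi_\xi)$. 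By contrast, the step that is genuinely \emph{easier} than in the general polarized case of Theorem~\ref{thm-wYTD} is precisely the regularization: $\bfD^\NA$ is approximable by honest test configurations, so one needs neither the models of Theorem~\ref{thm-appmodel} nor Conjecture~\ref{conj-Fujapp}, because $\bfL^\NA$ is well behaved in the strong topology.
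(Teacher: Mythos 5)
Your proposal reproduces the Berman--Boucksom--Jonsson skeleton (necessity via Darvas--Rubinstein plus slope identities; sufficiency by extracting a destabilizing geodesic ray, proving its maximality, passing to the non-Archimedean side, and regularizing $\bfD^\NA$), and you correctly observe that $\bfD^\NA$ regularizes better than $\bfM^\NA$ because $\bfL^\NA$ is well behaved in the strong topology. That is indeed why the Fano case is in reach. However, you propose to run this argument \emph{directly} on the singular pair $(X,D)$, and that is precisely where the proof in \cite{LTW20, LTW19, Li19} departs from yours: it does not.

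The genuine gap is the maximality step, where you invoke ``the Ding version of Theorem~\ref{thm-maximal}'' on $(X,D)$. Theorem~\ref{thm-maximal} (and the underlying Berman--Boucksom--Jonsson approximation of a destabilizing ray by test configurations via multiplier ideals $\mcJ(m\Phi)$ and their global generation) is established for \emph{smooth} $X$. On a singular klt variety there is no off-the-shelf Nadel-type machinery to show that a geodesic ray of finite $\bfD$-slope is maximal, and the paper explicitly flags this as the central obstruction: ``it is not clear how to use multiplier ideal sheaves to approximate a destabilizing geodesic ray $\Phi$ when $X$ is singular.'' You acknowledge at the end that these ingredients ``have to be re-established over klt log Fano pairs,'' but offering that sentence in place of a mechanism is exactly where your proof stops being a proof. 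Since without maximality you cannot even form $\phi=\Phi_\NA\in(\cE^1)^\NA$, the entire contradiction chain downstream has no starting point.

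What the paper actually does is to lift the problem to a log resolution $\mu\colon X'\to X$ and rewrite $-K_{X'}-D_\epsilon = \frac{1}{1+\epsilon}(\mu^*(-K_X-D)+\epsilon H)=:L_\epsilon$. When all discrepancies satisfy $a_k\in(-1,0]$, $(X',D_\epsilon)$ is a smooth log Fano pair; the valuative criterion (Theorem~\ref{thm-valcri}) is used to show that (semi)stability of $(X,D)$ forces uniform stability of $(X',D_\epsilon)$, one proves YTD for the smooth pair $(X',D_\epsilon)$, and then takes a limit of the resulting K\"ahler--Einstein metrics as $\epsilon\to 0$ with uniform potential and geometric estimates. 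In the general (ineffective $D_\epsilon$) case of \cite{LTW19}, the destabilizing ray itself is perturbed to a quasi-psh potential $\Phi_\epsilon$ on $(X'\times\bC, p_1'^*L_\epsilon)$ so that the multiplier-ideal approximation can be performed on the \emph{smooth} $X'$, and then one tracks convergence of non-Archimedean slopes as $\epsilon\to0$. The torus action and reduced $\bfJ$-normalization are threaded through this in \cite{Li19, Li20}, as you indicate. So the correct statement is not that the smooth-case ingredients can be ``re-established'' by tracking $D$ and exceptional divisors in your direct argument, but that the argument is rerouted through the resolution with an $\epsilon$-perturbation and a delicate limiting step; your proposal omits this reroute entirely.

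One more minor point: your appeal to Theorem~\ref{thm-special} to pass from special test configurations to all test configurations is in the right place, and the paper uses it for exactly that purpose (``the non-Archimedean approach a priori does not prove the statement... involving special test configurations. Fortunately Theorem~\ref{thm-special} fills this gap''). That part of your plan matches the paper.
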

Note that this combined with Theorem \ref{thm-LXZ} also proves the K-polystable version of the YTD conjecture. 
Theorem \ref{thm-YTDlog} can be used to get examples of K\"{a}hler-Einstein metrics on Fano varieties with large symmetry groups (see for example \cite{IS17}). 
The proof of Theorem \ref{thm-YTDlog} is much more technical than \cite{BBJ18} because we need to overcome the difficulties caused by singularities.  
The first key idea is to use an approximation approach initiated in \cite{LTW20}. Consider the log resolution $\mu: X'\rightarrow X$ as in section \ref{sec-logFano} and re-organize \eqref{eq-KYKXD} as:
\begin{align*}
-K_{X'}-D_\epsilon=\frac{1}{1+\epsilon}(\mu^*(-K_X-D)+\epsilon H)=:L_\epsilon.
\end{align*}
where $H=\mu^*(-K_X-D)-\sum_k \theta_k E_k$ is ample by choosing appropriate $\{\theta_k\}$ and $D_\epsilon=\sum_k (-a_k+\frac{\epsilon}{1+\epsilon})\theta_k E_k$ with $0\le \epsilon\ll 1$. In \cite{LTW20} we considered the simple case when $a_k\in (-1,0]$ for all $k$. In this case for $0<\epsilon\ll 1$, $(X', D_\epsilon)$ is a smooth log Fano pair. A crucial calculation using the valuative criterion from Theorem \ref{thm-valcri} shows that (semi)stability of $(X, D)$ implies the uniform stability of $(X', D_\epsilon)$ for $\epsilon>0$. 
Moreover we can prove a version of YTD conjecture for $(X', D_\epsilon)$ and deduce that it admits a K\"{a}hler-Einstein metric.
Next we take a limit as $\epsilon\rightarrow 0$ to get a K\"{a}hler-Einstein metric on $(X, D)$ itself. 
The proof of this convergence depends on technical uniform potential and geometric estimates. 

In \cite{LTW19}, we dealt with the general case when $D_\epsilon$ is not necessarily effective. A key difficulty for the argument in \cite{BBJ18} to work on singular varieties is that it is not clear how to use multiplier ideal sheaves to approximate a destabilizing geodesic ray $\Phi$ when $X$ is singular. To circumvent this difficulty, we first need to perturb $\Phi$ to become a singular quasi-psh potential $\Phi_\epsilon$ on $(X'\times \bC, p'^*_1 L_\epsilon)$. Since $X'$ is smooth, we know how to approximate $\Phi_\epsilon$ by test configurations for $(X', L_\epsilon)$ thanks to \cite{BBJ18}. 
However due to the ineffectiveness of $D_\epsilon$, the remaining arguments depend more heavily on non-Archimedean analysis and some key observation on convergence of slopes. 
In \cite{Li20} we further derived the valuative criterion for reduced uniform stability and understood how the torus action induces an action on the space of non-Archimedean potentials in order to incorporate group actions in the argument. 
Note that the non-Archimedean approach a priori does not prove the statement in Theorem \ref{thm-YTDlog} involving special test configurations. Fortunately Theorem \ref{thm-special} fills this gap. 

By using the fibration and approximation techniques mentioned earlier, Theorem \ref{thm-YTDlog} has been extended to the case of $g$-soliton on log Fano pairs in \cite{HL20}.  As explained in \cite{AJL21,Li21b} this can be used prove the YTD conjecture for Ricci-flat K\"{a}hler cone metrics thanks to its equivalence to particular $g$-solitons (see section \ref{sec-RFKC}). This generalizes the result of Collins-Sz\'{e}kelyhidi on YTD conjecture for Fano cones with isolated singularities (\cite{CS19}). 

\subsubsection{Other approaches}\label{sec-YTDother}
For completeness, we briefly mention other approaches to the YTD conjecture on Fano manifolds. The classical way to solve the K\"{a}hler-Einstein equation is through various continuity methods. Traditionally one uses Aubin's continuity method involving twisted KE metrics. A more recent method uses KE metrics with edge cone singularities as proposed by Donaldson. Finally there is a K\"{a}hler-Ricci flow approach. Tian's early works showed that the most difficult part in proving the YTD conjecture by continuity methods is to establish the algebraicity of limit objects in the Gromov-Hausdorff topology, and he  had essentially reduced this difficulty to proving some partial $C^0$-estimates. 
The partial $C^0$-estimates were later proved in different settings, starting with Donaldson-Sun's work in the K\"{a}hler-Einstein case, which leads to the solution of the YTD conjecture for smooth Fano manifolds in \cite{CDS15, Tia15}. Moreover the partial $C^0$-estimates has applications in constructing moduli spaces of smoothable K\"{a}hler-Einstein varieties and proving quasi-projectivity of the moduli spaces of KE manifolds, and these applications preclude the algebraic approach mentioned earlier (see \cite{Wan19}). We refer to \cite{Don16, Tia15b} for surveys on related topics in this approach.  

Very recently, yet another quantization approach is carried out by Kewei Zhang based partly on an earlier work of Rubinstein-Tian-Zhang.
Zhang considered an analytic invariant of Moser-Trudinger type, namely
\begin{equation*}
\delta^A(X)=\sup\bigg\{c; \sup_{\vphi\in \mcH}\int_X e^{-c(\vphi-\bfE(\vphi))}<+\infty\bigg\}.
\end{equation*}
It is easy to show that the coercivity of $\bfD$-functional is equivalent to $\delta^A(X)>1$. The authors of
\cite{RTZ20} introduces a quantization $\delta^A_m(X)$ by using a quantization of $\bfE$ on the space of Bergman metrics, and further proves $\delta^A_m(X)=\delta_m(X)$. Using some deep results in complex geometry including Tian's work on Bergman kernels and Berndtsson's subharmonicity theorem, it is proved in \cite{Zha21} that $\lim_{m\rightarrow+\infty}\delta^A_m(X)=\delta^A(X)$.
Combining these discussion with the algebraic convergence result of Blum-Jonsson and the valuative criterion of uniform stability of Fujita discussed earlier, Zhang gets $\delta^A(X)=\delta(X)$ and completes the proof of uniform version of YTD conjecture for smooth Fano manifolds. 
It would be interesting to extend this approach to the more general case (i.e. Fano varieties with continuous automorphism groups). 

We finish by remarking that it is of interest to apply the ideas and methods from the above two approaches to study the YTD conjecture for general polarizations. For the approach involving partial $C^0$-estimates, the geometry is complicated by collapsing phenomenon in the Gromov-Hausdorff convergence with only scalar curvature bounds, which is very difficult to study with current techniques. 
For the quantization approach, there were some attempts by Mabuchi in several works. But the precise picture seems again unclear.


\textbf{Ackowledgement:}
This work is partially supported by NSF grant (DMS-181086) and an Alfred Sloan Fellowship. 
The author thanks Gang Tian, Chenyang Xu, Feng Wang, Xiaowei Wang, Jiyuan Han and Yuchen Liu for collaborations on works discussed in this survey. He also thanks many mathematicians for various discussions related to our works: Vestislav Apostolov, 
Robert Berman, S\'{e}bastien Boucksom, Tam\'{a}s Darvas, Simon Donaldson, Kento Fujita, Hans-Joachim Hein, Mattias Jonsson, L\'{a}szl\'{o} Lempert,
Mircea Musta\c{t}\u{a}, Yuji Odaka, Yanir Rubinstein,  Jian Song, Song Sun, Jun Yu, and many others.
A large part of the author's works discussed here were carried out at the Department of Mathematics, Purdue University, and he is grateful to his colleagues and friends there for creating a great environment of research.  


\end{document}